\renewcommand{\a }{\alpha }
\renewcommand{\d}{\delta }
\newcommand{\D }{\Delta }
\newcommand{\e }{\varepsilon }
\newcommand{\g }{\gamma}
\newcommand{\G }{\Gamma }
\renewcommand{\l }{\lambda }
\newcommand{\n }{\nabla }
\newcommand{\s }{\sigma }
\renewcommand{\th }{\theta }
\renewcommand{\O }{\Omega }
\newcommand{\z }{\zeta }
\newcommand{\ov}{\overline}
\newcommand{\intbar}{\mathop{\int\makebox(-13.5,0){\rule[4pt]{.7em}{0.3pt}}%
\kern-6pt}\nolimits}
\newcommand{\be}{\begin{equation}}
\newcommand{\ee}{\end{equation}}
\newcommand{\bes}{\begin{equation*}}
\newcommand{\ees}{\end{equation*}}
\newcommand{\ba}{\begin{eqnarray}}
\newcommand{\ea}{\end{eqnarray}}
\newcommand{\bas}{\begin{eqnarray*}}
\newcommand{\eas}{\end{eqnarray*}}
	\newenvironment{proof}[1][Proof]{\noindent\textbf{#1.} }{\ \rule{0.5em}{0.5em}}
\newenvironment{pf}{\noindent{\sc Proof}.\enspace}{\rule{2mm}{2mm}\medskip}
\newenvironment{pfn}{\noindent{\sc \bf Proof }}{\rule{2mm}{2mm}\medskip}
\newcommand{\R}{\mathbb{R}}
\newcommand{\N}{\mathbb{N}}
\author{ Mohamed Ben Ayed$^a$\thanks{  E-mail : {M.BenAyed@qu.edu.sa} } and
 Khalil El Mehdi$^{a,b}$\thanks{Corresponding author. E-mail : {K.Jiyid@qu.edu.sa} } \\ 
{\footnotesize
a : Department of Mathematics, College of Science, Qassim University, Buraydah 51542, Saudi Arabia}\\
{\footnotesize
 b : Facult\'e des Sciences et Techniques, Universit\'e de Nouakchott, Nouakchott 2373, Mauritania.}\qquad\quad
}
\date{}
\title{\bf Bubbles clustered inside for almost critical problems} 
\begin{document}

\newtheorem{lem}{Lemma}[section]
\newtheorem{pro}[lem]{Proposition}
\newtheorem{thm}[lem]{Theorem}
\newtheorem{rem}[lem]{Remark}
\newtheorem{cor}[lem]{Corollary}
\newtheorem{df}[lem]{Definition}

\maketitle

\noindent{\bf Abstract:} We investigate the existence of blowing-up solutions of  the following almost critical problem  
$$
-\Delta u +V(x)u =u^{p-\e},\quad u>0\quad\mbox{in}\quad  \O,\quad u=0\quad\mbox{on}\quad \partial\O,
$$
 where $\O$ is a bounded regular domain  in $\mathbb{R}^n$, $n\geq 4$, $\varepsilon$ is a small positive parameter, $p+1=(2n)/(n-2)$ is the critical Soblolev exponent and    the potential $V$ is a smooth positive function. 
We find solutions which exhibit bubbles clustered inside as $\e$ goes to zero. To the best of our knowledge, this is the first existence result for interior non-simple blowing-up positive solutions to Dirichlet problems in general domains.
Our results are proven through delicate asymptotic
estimates of the gradient of the associated Euler-Lagrange functional.

\bigskip

\noindent{\bf Key Words:}  Partial Differential Equations, Variational analysis, Nonlinear analysis, Critical Sobolev exponent.

\bigskip

\noindent {\bf MSC  $2020$}: 35A15, 35J20,  35J25.

\section{Introduction and Main Results}
In this paper, we study the following almost critical problem
\begin{align}
(\mathcal{P}_{V,\e}):\qquad 
\begin{cases}
-\Delta u +V u = u^{p-\varepsilon}\quad &\mbox{in}\quad \Omega,
\\
\quad  u > 0 \quad &\mbox{in}\quad \Omega,
\\
\quad u =0\,\quad &\mbox{on}\quad \partial\Omega,
\end{cases}
\end{align}
where $\Omega$ is a bounded regular domain in $\mathbb{R}^n$, $n\geq 4$, $p+1= (2n)/(n-2)$ is the critical Sobolev exponent for the embedding $H^1_0(\Omega) \hookrightarrow L^q(\Omega)$, the potential $V$ is a $C^3$ positive function on $\overline{\Omega}$ and $\varepsilon$ is a small positive parameter.  

The problem in the form of $(\mathcal{P}_{V,\e})$ arises in various physical models, such as quantum transport and non-relativistic Newtonian gravity, as detailed in \cite{BGDB, BLJS, Pe} and their associated references. It is also connected to the Yamabe problem in differential geometry, as discussed for example in \cite{DH} and the references therein.

In the critical case, where $\e=0$, it is well-established that the existence of solutions to problem $(\mathcal{P}_{V,\e})$ is influenced by the geometry of the domain, the characteristics of the potential $V$, and the dimension $n$. For instance, when $V$ is constant and the domain is star-shaped, there are no solutions to the given problem. Due to the vast amount of research on this subject, we will only mention the seminal works by Brezis-Nirenberg \cite{BN} and Bahri-Coron \cite{BC}.

In the subcritical case where $\e >0$, proving the existence of a solution to the problem $(\mathcal{P}_{V,\e})$ is relatively simple. This can be shown by observing that the infimum 
$$
\inf \,\{\int_\O \left(|\n u |^2 +V\,u^2\right):\quad u\in H^1_0(\O) \,  \mbox{ and } \,  \int_\O |u|^{p+1-\e}=1\, \}
$$
is achieved, thanks to the compactness of the embedding $H^1_0(\Omega) \hookrightarrow L^{p+1-\e}(\Omega)$.

In \cite{2ABE}, solutions of $(\mathcal{P}_{V,\e})$ were constructed that concentrate, as $\e \to 0$, at interior blow-up points, forming isolated bubbles. By {\it bubble} here, we refer to the functions defined by
$$
\delta_{a,\lambda}(x):= c_{0}\frac{{\lambda}^{ (n-2) / 2 }}{\left ( 1+\lambda^{2} |x-a|^{2}\right )^{ (n-2) /  2 }} , \quad \mbox{with} \quad c_{0}:=\left [ n(n-2) \right ]^{ \frac{n-2}{ 4}}, \quad a\in\mathbb{R}^n,\quad \l>0
$$
which are the only solutions to the equation \cite{CGS}
$$
-\D  u = u^p,\quad u>0 \quad \mbox{in}\quad  {\mathbb{R}}^{n}.
$$

   In this paper, we focus on the construction of interior bubbling solutions of $(\mathcal{P}_{V,\e})$ with clustered bubbles at  critical points of $V$. These solutions reveal a new phenomenon for positive solutions, namely the existence of non simple blow-up points in the interior for the subcritical problem with Dirichlet boundary conditions in general domains. This phenomenon has been observed for changing-sign solutions on certain symmetric domains \cite{PW} and for positive solutions on the ball \cite{WY}. To the best of our knowledge, this is the first existence result for interior non-simple blowing-up positive solutions to Dirichlet problems in general domains.

In order to formulate our results, we need to introduce some notation. Let $ b $ be a non-degenerate critical point of $ V $ and $N\in\mathbb{N}$, we define the following function 
\be \label{FYN}
\mathcal{F}_{b, N } (z_1, \cdots , z_N) := \sum_{ i=1}^N D^2 V(b) ( z_i , z_i ) - \sum_{ 1 \leq j \neq i \leq N  } \frac{ 1 } { | z_j - z_i |^{ n-2} }. 
\ee

The aim of our first result is to construct interior bubbling solutions with clustered bubbles at a critical point of $V$, Namely, we have:
\begin{thm}\label{th:t1}
Let   $n \geq 4$, $ N  \geq 2$ and $ b \in \O$ be a non-degenerate critical point of $V$. Assume that the function $  \mathcal{F}_{ b , N }$ has a non-degenerate critical point $ ( \ov{z}_1, \cdots, \ov{z}_N) $. 
Then, there exists a small positive real $\e_0$  such that for any  $\e\in (0,\e_0]$,  problem $(\mathcal{P}_{V,\e})$ has a solution $u_{\e, b}$ with the following property
$$
u_{\e,b} =\sum_{i=1}^N \a_{i,\e}\d_{{a_{i,\e}}, \l_{i,\e}} +v_\e,
$$
with  
\begin{align}
&\| v_\e\| < c \sqrt{\e},\label{t30}\\
&|\a_{i,\e} -1| \leq \e \ln^2 \e \quad \forall\,1\leq i\leq N\label{t31}\\
&\frac{1}{c} \leq \frac{\ln^{\sigma_n}\l_{i,\e}}{\e\l_{i,\e}^2}\leq c  \quad \forall\,1\leq i\leq N\label{t32}\\
&|a_{i,\e} -b +\eta(\e)\sigma\overline{z}_i|\leq \eta(\e) \eta_0 \quad \forall\,1\leq i\leq N,\label{t33}
\end{align}
where $\eta_0$ is a small positive constant, $\sigma_4=1$, $\sigma_n=0$ for $n\geq 5$, $c$ is a positive constant, $\eta(\e)$ and $\sigma$ are defined in \eqref{sigma}.\\
In addition, if for each $N$,  $\mathcal{F}_{b, N }$ has a non-degenerate critical point, then, problem $(\mathcal{P}_{V,\varepsilon})$  has an arbitrary number of non-constant distinct solutions provided that $\e$ is small.
\end{thm}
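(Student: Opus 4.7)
The proof uses a Lyapunov--Schmidt finite-dimensional reduction in the spirit of Bahri--Coron. The Euler--Lagrange functional associated to $(\mathcal{P}_{V,\e})$ is
\begin{equation*}
J_\e (u) = \frac{1}{2}\int_\O \bigl(|\n u|^2 + V u^2\bigr)\, dx - \frac{1}{p+1-\e}\int_\O |u|^{p+1-\e}\, dx ,
\end{equation*}
whose positive critical points solve $(\mathcal{P}_{V,\e})$. I look for a solution of the form $u = W_{\a,a,\l}+v$, where $W_{\a,a,\l}:=\sum_{i=1}^N \a_i\, P\d_{a_i,\l_i}$, the function $P\d_{a,\l}\in H^1_0(\O)$ is the Dirichlet projection of the standard bubble $\d_{a,\l}$, the parameters $(\a_i, a_i, \l_i)$ vary in the region prescribed by \eqref{t31}--\eqref{t33}, and $v$ is required to be orthogonal (in the $H^1_0$ inner product) to the $(n+2)N$-dimensional tangent space spanned by the vectors $P\d_{a_i,\l_i}$, $\partial_{\l_i} P\d_{a_i,\l_i}$ and the components of $\partial_{a_i} P\d_{a_i,\l_i}$, $1\le i\le N$.

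The first step is the infinite-dimensional reduction. With the parameters fixed, the linearization $-\D + V - (p-\e) W_{\a,a,\l}^{p-\e-1}$ is, asymptotically, uniformly invertible on the orthogonal subspace, since its only almost-kernel directions are the tangent directions listed above. A standard contraction-mapping argument then solves for $v = v(\a,a,\l)$, giving the quantitative bound $\|v\|<c\sqrt\e$ claimed in \eqref{t30}. The size $\sqrt\e$ reflects the $H^{-1}(\O)$ norm of the error $(-\D+V)W_{\a,a,\l}-W_{\a,a,\l}^{p-\e}$, whose main contributions come from the subcritical perturbation (of order $\e$), the potential $V$, and the pairwise bubble interactions.

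The second step is the finite-dimensional reduction: one sets $\widetilde J_\e(\a,a,\l) := J_\e\bigl(W_{\a,a,\l}+v(\a,a,\l)\bigr)$ and looks for its critical points, which by construction are in one-to-one correspondence with critical points of $J_\e$ of the desired form. The heart of the argument is the delicate asymptotic expansion of $\n \widetilde J_\e$ advertised in the abstract. Schematically, the $\a_i$-equations force $\a_i = 1+O(\e\ln^2\e)$, yielding \eqref{t31}; the $\l_i$-equations balance the loss of order $\e\ln^{\sigma_n}\l_i / \l_i^2$ produced by the critical exponent perturbation against a positive term of order $V(a_i)/\l_i^2$ coming from $V$, pinning $\l_i$ at the rate of \eqref{t32}; and the $a_i$-equations, after the substitution $a_i = b + \eta(\e)\,\sigma\, z_i$, reduce at leading order to
\begin{equation*}
\n_{z_i}\mathcal{F}_{b,N}(z_1,\ldots,z_N) = o(1),
\end{equation*}
because the Taylor expansion of $V$ around its critical point $b$ produces the Hessian term $D^2V(b)(z_i,z_i)$ of \eqref{FYN}, while the bubble-bubble interactions produce the $|z_j-z_i|^{-(n-2)}$ terms. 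The scaling $\eta(\e)$ in \eqref{sigma} is precisely the one for which these two effects are of the same order.

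The principal technical obstacle is this last expansion: one must identify $\n \mathcal{F}_{b,N}$ as the dominant term in the $a_i$-directions and show that all remainders are strictly of lower order, uniformly in the parameter region. Once this expansion is in place, the two non-degeneracy hypotheses --- $b$ a non-degenerate critical point of $V$ (which controls the $\l_i$- and tangential components at $b$) and $(\ov z_1,\ldots,\ov z_N)$ a non-degenerate critical point of $\mathcal{F}_{b,N}$ (which controls the rescaled cluster geometry) --- make the Jacobian of the reduced system invertible, and the implicit function theorem produces a zero of $\n \widetilde J_\e$ in the prescribed region. The resulting $u_{\e,b}=W_{\a,a,\l}+v$ is positive for $\e$ small by the maximum principle together with the strict positivity and dominance of $W_{\a,a,\l}$, hence is a genuine solution of $(\mathcal{P}_{V,\e})$. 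For the last assertion, the reduced energy satisfies $\widetilde J_\e = N\,c_1 + o(1)$ with $c_1$ the one-bubble energy, so solutions associated to different values of $N$ have distinct energies for $\e$ small and are therefore pairwise non-equal; letting $N$ be arbitrarily large provides arbitrarily many non-constant distinct solutions.
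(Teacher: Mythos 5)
Your proposal follows essentially the same route as the paper: a Lyapunov--Schmidt reduction solving for $v$ on the orthogonal complement of the approximate kernel, followed by an asymptotic expansion of the gradient in the $\a_i$-, $\l_i$- and $a_i$-directions, the change of variables $a_i=b+\eta(\e)\sigma(\cdot)$ turning the $a_i$-equations into the linearization of $\n\mathcal{F}_{b,N}$ at $(\ov z_1,\dots,\ov z_N)$, and the two non-degeneracy hypotheses yielding invertibility of the reduced system. One point to correct: in the $\l_i$-equation the subcritical perturbation contributes a term of order $\e$ (not $\e\ln^{\sigma_n}\l_i/\l_i^2$), and it is this term that balances against the potential's contribution $c(n)V(a_i)\ln^{\sigma_n}\l_i/\l_i^2$; the balance as you wrote it would give $\e\ln^{\sigma_n}\l_i\sim V(a_i)$ rather than the rate \eqref{t32}, so although you state the correct conclusion, the mechanism you describe does not produce it.
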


\begin{rem} 
Note that, for $ N=1 $, the existence of solution blowing up at a critical point of $V$ with only one bubble has already proved in \cite{2ABE}.
\end{rem}

Our second result addresses the case of multiple interior blow-up points with clustered bubbles. More precisely, we prove:

\begin{thm}\label{th:t2} 
Let  $n \geq 4$,  $ b_1, \cdots , b_k  \in \O$ be  non-degenerate critical points of $ V $ and 
 $ N_1, \cdots, N_k$ be  non-zero natural integers. For each  $ i \in \{ 1 , \cdots, k \}$ such that $ N_i \geq 2$, assume that the function $  \mathcal{F}_{ b_i , N_i }$ has a non-degenerate critical point $( \ov{z} ^ i _1, \cdots, \ov{z}^ i _{N_i}) $. Then, there exists a small positive real $\e_0$ such that for any  $\e\in (0,\e_0]$,  problem $(\mathcal{P}_{V,\e})$ has a solution  $u_{\e, b_1,\cdots, b_k}$ satisfying 
$$
u_{\e, b_1,\cdots, b_k} =\sum_{i=1}^{N_1} \a_{1,i,\e}\d_{{a_{1,i,\e}}, \l_{1,i,\e}}+\cdots + \sum_{i=1}^{N_k} \a_{k,i,\e}\d_{{a_{k,i,\e}}, \l_{k,i,\e}} +v_\e,
$$
with $\| v_\e\| \leq c \sqrt{\e}$  and for each $l\leq k$ the coefficients $\a_{l,i,\e}$, the speeds $\l_{l,i,\e}$ and the points $a_{l,i,\e}$ satisfy the properties \eqref{t31}, \eqref{t32}, and \eqref{t33}, respectively. 
\end{thm}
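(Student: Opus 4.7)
\bigskip
\noindent\textbf{Proof plan for Theorem \ref{th:t2}.}

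The plan is to run the same Lyapunov--Schmidt reduction that underlies Theorem \ref{th:t1}, but applied to the larger ansatz in which bubbles are grouped into $k$ clusters, each cluster concentrating around a distinct critical point $b_l$. First, I would set up the ansatz
\bes
u \, = \, \sum_{l=1}^{k}\sum_{i=1}^{N_l} \a_{l,i}\,\d_{a_{l,i},\l_{l,i}} \, + \, v,
\ees
with $(a_{l,i},\l_{l,i},\a_{l,i})$ lying in a suitable configuration set adapted to Theorem \ref{th:t1} (so that within each cluster the speeds satisfy \eqref{t32} and the centers satisfy $a_{l,i}=b_l+\eta(\e)\s\,\ov{z}^l_i+O(\eta(\e)\eta_0)$), and $v$ in the orthogonal complement $E_{a,\l,\a}$ of the tangent space to the manifold of sums of bubbles spanned by $\d_{a_{l,i},\l_{l,i}}$, $\partial_\l\d_{a_{l,i},\l_{l,i}}$ and $\partial_a\d_{a_{l,i},\l_{l,i}}$. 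The first step is to solve the $v$-equation: using the standard invertibility of the linearized operator on $E_{a,\l,\a}$ (which is inherited from the single-cluster analysis because the projectors are block-diagonal up to exponentially small terms), the fixed-point argument produces a unique $v_\e$ with $\|v_\e\|\le c\sqrt{\e}$, exactly as in Theorem \ref{th:t1}.

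The next step is to reduce to the finite-dimensional problem for $(a_{l,i},\l_{l,i},\a_{l,i})$. The key observation is that the reduced functional splits:
\bes
J_\e\Bigl(\sum_{l,i}\a_{l,i}\d_{a_{l,i},\l_{l,i}}+v_\e\Bigr) \, = \, \sum_{l=1}^{k} \mathcal{J}_{\e,l}\bigl((a_{l,i},\l_{l,i},\a_{l,i})_{i\le N_l}\bigr) \, + \, R_\e,
\ees
where $\mathcal{J}_{\e,l}$ is exactly the reduced functional governing the single-cluster problem at $b_l$ (whose critical-point structure is what Theorem \ref{th:t1} exploits), and $R_\e$ collects the cross-cluster interaction terms. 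The point is that two bubbles attached to different $b_l\neq b_{l'}$ lie at mutual distance $\ge \tfrac12|b_l-b_{l'}|$, which is bounded away from zero, whereas the relevant intra-cluster interactions live at the scale $\eta(\e)\to 0$; hence each cross-interaction $\e_{(l,i),(l',j)}=O\bigl((\l_{l,i}\l_{l',j})^{-(n-2)/2}\bigr)$ is smaller by several orders of $\e$ than any intra-cluster term appearing in the $\mathcal{F}_{b_l,N_l}$ expansion. Consequently $R_\e$ is negligible at the order that drives the critical-point analysis of each $\mathcal{J}_{\e,l}$.

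Given this decoupling, the critical-point problem for $J_\e$ reduces, to leading order, to finding critical points of each $\mathcal{J}_{\e,l}$ separately. For each $l$ with $N_l\ge 2$, the non-degenerate critical point $(\ov{z}^l_1,\dots,\ov{z}^l_{N_l})$ of $\mathcal{F}_{b_l,N_l}$ provides, by the analysis of Theorem \ref{th:t1}, a non-degenerate critical point of $\mathcal{J}_{\e,l}$; for $l$ with $N_l=1$, one uses instead the non-degenerate critical point $b_l$ of $V$ as in \cite{2ABE}. An implicit function / local Brouwer degree argument then glues these critical points together into a critical point of the full reduced functional: non-degeneracy is preserved because the Hessian of $\sum_l\mathcal{J}_{\e,l}$ is block-diagonal and the perturbation $R_\e$ is small in the $C^1$-norm on the configuration set. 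The resulting critical point yields the solution $u_{\e,b_1,\dots,b_k}$ with the stated estimates \eqref{t31}--\eqref{t33} cluster by cluster.

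The main obstacle is controlling $R_\e$ together with its derivatives with respect to the $(a_{l,i},\l_{l,i},\a_{l,i})$ uniformly on the configuration set, and checking that these derivatives are of strictly smaller order than the intra-cluster quantities that drive the critical-point condition coming from $\mathcal{F}_{b_l,N_l}$. This is a delicate but essentially routine consequence of the asymptotic expansions of the gradient of the Euler--Lagrange functional developed for Theorem \ref{th:t1}, applied here to pairs of bubbles sitting at bounded mutual distance; once these estimates are in place, the existence of an arbitrary number of solutions and the precise asymptotic profile follow at once from the single-cluster theorem applied in parallel at each $b_l$.
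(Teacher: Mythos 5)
Your proposal is correct and follows essentially the same route as the paper: both rest on the observation that bubbles attached to distinct critical points $b_l\neq b_{l'}$ sit at mutual distance bounded away from zero, so the cross-cluster interactions $\e_{(l,i),(l',j)}=O\bigl((\l_{l,i}\l_{l',j})^{-(n-2)/2}\bigr)=O(\e^{(n-2)/2})$ are of strictly lower order than the intra-cluster terms (of order $\e$ in the $\l$-equations, and of the $\mathcal{F}_{b_l,N_l}$-scale in the $a$-equations), hence are absorbed into the remainders, and the reduced system decouples into $k$ copies of the system of Theorem~\ref{th:t1}. The only cosmetic difference is that you phrase the decoupling at the level of the reduced energy functional $J_\e=\sum_l\mathcal{J}_{\e,l}+R_\e$, whereas the paper works directly at the level of the balancing equations $(E_{\l_i})$ and $(E_{a_i})$; the two formulations are equivalent here.
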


The proof of our results relies on refined asymptotic estimates of the gradient of the associated Euler-Lagrange functional in the  neighborhood of bubbles. The goal is to determine the equilibrium conditions satisfied by the concentration parameters. These balancing conditions are derived by testing the equation with vector fields that represent the dominant terms of the gradient relative to the concentration parameters. Analyzing these conditions provides all the necessary information to establish our results.

The rest of the paper is organized as follows: In Section $2$, we introduce the parameterization of the neighborhood of bubbles, provide a precise estimate of the infinite-dimensional part, and carry out a delicate asymptotic expansion of the gradient of the associated Euler-Lagrange functional. Section $3$ is dedicated to proving our results. Finally, in Section $4$, we present several estimates that are referenced throughout the paper.

\section{Analytical Framework}
Problem $(\mathcal{P}_{V,\e})$ has a variational structure and its Euler-Lagrange functional is defined on $H^1_0(\O)$ by 
\be\label{eq:7}
 I_{V,\varepsilon}\left(u \right) :=\frac{1}{2} \int_{\Omega}\left|\nabla u \right|^{2} +\frac{1}{2} \int_{\Omega} Vu^2-\frac{1}{p+1-\varepsilon}\int_{\Omega}\left|u \right|^{p+1-\varepsilon} \quad 
(\mbox{with } p := \frac{n+2}{n-2}) . \ee
In the sequel, we will use  the following scalar product and its corresponding norm defined by
\be\label{eq:4}
\left\langle v,w\right\rangle:=\int_{\O}\n v\cdot\n w+\int_{\O}Vvw \quad ; \quad \|w\| ^2 :=\int_{\O}|\n w|^2+\int_{\O}V w^2.
\ee
Notice that, since $V$ is a $C^0$-positive function on $\ov{\O}$, we deduce that this norm is equivalent to the two norms $||.||_{0}$ and $||.||_{1}$ of $H^1_0(\O)$ and $H^1(\O)$ respectively.

We start by giving some estimates concerning the approximate solutions for $(\mathcal{P}_{V,\e})$. For $a\in\O$ and $\l>0$, let   $ \pi \d_{a, \l} $ the projection defined by 
$$
\begin{cases}
-\Delta\pi \delta_{a,\lambda} +V \pi \delta_{a,\lambda} = \delta_{a,\lambda}^{\frac{n+2}{n-2}}\quad &\mbox{in}\quad \Omega,  \\
\quad\pi \delta_{a,\lambda} =0\,\quad &\mbox{on}\quad \partial\Omega , 
\end{cases}
$$
and  
$$
\th_{a,\l}:=\d_{a,\l}-\pi\d_{a,\l}.
$$
These functions are introduced in \cite{2ABE} and they satisfy, for $ n \geq 4$,  
\begin{align}
 & 0\leq\pi\d_{a,\l}\leq\d_{a,\l} \quad ; \quad \left|\l\frac{\partial\pi\d_{a,\l}}{\partial\l}\right|\leq\frac{n+2}{2}\pi\d_{a,\l} \quad ; \quad \left|\frac{1}{\l}\frac{\partial\pi\d_{a,\l}}{\partial a_j}\right|\leq\frac{n+2}{2}\pi\d_{a,\l}   \quad  \mbox{ in } \O , \label{est0} \\
 &  0\leq\th_{a,\l}\leq\d_{a,\l} \quad  ; \quad  \left|\l\frac{\partial\th_{a,\l}}{\partial\l}\right|\leq\frac{n-2}{2}\theta_{a,\l} \quad  ; \quad \left|\frac{1}{\l}\frac{\partial\theta_{a,\l}}{\partial a_j}\right|\leq\frac{n-2}{2}\th_{a,\l}  \quad  \mbox{ in } \O ,\label{est1} \\
&   \th_{a,\l} \leq cR_{a,\l}^1\d_{a,\l} \quad ; \quad   \left|\frac{1}{\l}\frac{\partial\th_{a,\l}}{\partial a_j}\right|\leq cR_{a,\l}^2\d_{a,\l}\quad
\mbox{ in } \O_0:= \{ x \in \O : d(x, \partial \O) \geq d_0 \} , \label{est2}  \\
& \mbox{ with }
 R_{a,\l}^ 1(x):= { \ln ^{\s_n}(\l) }{ \l^{ -2 } } + |x-a|^{2} | \ln | x - a | |^{\s_n} \quad   ; \quad  
R_{a,\l}^2(x) := \l^{-2}+\l^{-1}|x-a| ,  \label{est3} 
\end{align}
 where $ d_0$ is any fixed small positive constant,  $ \s_4 := 1$,  $ \s_n := 0 $ for $ n \geq 5$ and $a_j$ is the $j^{th}$-component of $a$. \\
 For the proof of these facts, the interested reader is referred to \cite{2ABE}.

Now, we introduce the parameterization of the neighborhood of bubbles. Let $ N \in \N$ and $ \mu > 0 $ be a small real. We define the following set 
\begin{align}
 \mathcal{O} ( N, \mu) := \{ (\a, a, \l) \in (0,\infty)^N \times  \O ^N \times  ( \mu^{-1} , \infty)^N & : \, \,  | \a_i - 1 | < \mu ; \, \, d(a_i, \partial \O ) > 2 d_0 ;  \label{eq:2} \\
  &  \e \ln \l_i < \mu ; \, \, \forall \, \, i  \, \mbox{ and }\, \,  \e_{ij} < \mu  \, \, \forall \, \, i \neq j \}  \nonumber 
  \end{align}
where  $ \e_{ij} $ is defined by 
\be \label{eq:3} \e_{ij} := \Big( \frac{ \l_i }{ \l_j } + \frac{ \l_j }{ \l_i } + \l_i \l_j | a_i - a_j |^2 \Big)^{(2-n)/2} . \ee
Observe that, since $ \O $ is bounded, for each $ b \in \O $ and $ \lambda > 0 $ satisfying $ \e \ln \l $  is small, it follows that 
  \be\label{eq:8} 
  \d_{b, \l} ^{ - \e } = c_0^{- \e} \l^{-\e (n-2)/2} \Big( 1 + \frac{ n-2 }{2 } \e \ln(1+ \l^2 | x-b|^2 ) \Big) + O \Big( \e^2 \ln^2 (1+ \l^2 | x-b|^2 ) \Big) = 1 + o(1)  . \ee
For each $( \a, a , \l) \in \mathcal{O}(N,\mu) $ and $ v \in E_{a,\l} ^\perp $, we associate a function 
\begin{align} 
& u := \sum _{ i=1}^N \a_i (\pi \d_{a_i, \l_i} ) + v := \underline{ u } + v , \quad \mbox{  where } \label{eq:6}  \\ 
& E_{a,\l} := \mbox{span} \Big\{ \pi \d_{a_i, \l_i} , \frac{ \partial (\pi \d_{a_i, \l_i}) }{ \partial \l_i } ,  \frac{ \partial (\pi \d_{a_i, \l_i}) }{ \partial a_{i,j}} , \, \, i \in \{1, \cdots , N \} , \, \, j \in \{1, \cdots , n \} \Big\}.  \label{Eal} \end{align}
We notice that the orthogonality is taken with respect to the scalar product defined in \eqref{eq:4}. \\
In the sequel, we need to study the functional $ I_{V,\e} $ and to find some positive critical points $ u $ having the form \eqref{eq:6}. We start by studying the $v$-part of $u$.

\subsection{Estimate of the infinite-dimensional part}
Let $(\alpha, a,\lambda) \in \mathcal{O} (N,\mu)$, $ v \in E_{a,\l}^\perp$  and $u$ be defined in \eqref{eq:6}. In this section, we are going to study  the $v$-part of $u$. To this aim, we need to expand $I_{V, \varepsilon}$ with respect to $v$.

Observe that, for $ b_1 $, $ b_2 \in \R $, and $\g>2$, it holds 
\be \label{lst1} 
	\left| \left| b_1 + b_2 \right|^{\g}-\left| b_1 \right|^{\g}-{\g}\left| b_1 \right|^{{\g}-2} b_1 b_2 - \frac{1}{2} {\g}(\g -1) \left| b_1 \right|^{\g -2} b_2 ^{2}\right|\leq \begin{cases}
		c\left| b_1 \right|^{\g -3} \left| b_2 \right|^3 + c | b_2 | ^\gamma  & \mbox{if} \quad \g > 3, \\
		c \left| b_2 \right|^{\g} & \mbox{if} \quad \g \leq 3.
\end{cases}
\ee
Thus, using  the fact that $\left\langle v,\underline{u}\right\rangle=0$, we get: 
\begin{align} 
&   I_{V, \varepsilon}\left(u\right)  = I_{V,\varepsilon}(\underline{u})-  f_{\varepsilon}(v)+\frac{1}{2}Q_{\varepsilon}(v)
+o \left({||v||}^{2}\right)  \qquad \mbox{ where } \label{ie1}  \\
 &   f_{\varepsilon}(v) := \int_{\Omega}(\underline{u})^{p-\varepsilon}v  \qquad \mbox{ and } \qquad Q_{\varepsilon}(v):=\left||v|\right|^{2} -(p-\varepsilon) \int_{\Omega}({\underline{ u }})^{p-\varepsilon-1}  v^{2}.\label{1}
 \end{align}
 Notice that $ Q_\e $ is a positive definite quadratic form (see Proposition $3.1$ of \cite{2ABE}) and the linear form $ f_\e $ satisfies 
\begin{lem} \label{lfv}
	Let $(\alpha, a,\lambda) \in \mathcal{O} (N,\mu)$. Then, for each $v \in E_{a,\lambda}^{ \perp}$, we have:
\begin{align*}  
& \left| f_{\varepsilon} (v) \right| \leq c \left\| v \right\| \Big(\varepsilon+ \sum_{ 1 \leq i \leq N} T_{2}(\lambda_{i}) +  \sum_{i\neq j} T_3( \e_{ij} ) \Big) , 
\qquad \mbox{ 
where }  \\
&  T_2(\l):= \big( \lambda^{-2} \, \, \,   \mbox{ if } \;\, n\geq 5\quad ; \quad   \lambda^{-2}\ln\l  \, \, \,   \mbox{ if } \;\, n=4 \quad ; \quad  \lambda^{-1} \, \, \,   \mbox{ if } \;\, n=3 \big) ,  \\
 &  T_3 (t) := ( t ^{ (n+2) / (2(n-2)) } ( \ln t ^{-1} )^{ (n+2) / 2n }  \text{ if } \; n\geq 6 \quad , \quad 
t  \text{ if }\;  n\leq 5 ) . 
\end{align*}
\end{lem}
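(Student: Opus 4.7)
The starting observation is that, since $v\in E_{a,\l}^\perp$, in particular $\langle v,\pi\d_{a_i,\l_i}\rangle=0$ for every $i$, so that by the defining equation for $\pi\d_{a_i,\l_i}$ and an integration by parts, $\int_\O \d_{a_i,\l_i}^p\,v=0$. We may therefore freely subtract $\sum_i \a_i^p\d_{a_i,\l_i}^p$ inside the integral defining $f_\e(v)$. My plan would be to write $f_\e(v)=\int_\O\bigl(\underline{u}^{p-\e}-\sum_{i=1}^N\a_i^p\,\d_{a_i,\l_i}^p\bigr)\,v$, bound the parenthesis in $L^{2n/(n+2)}$, and conclude by H\"older together with the Sobolev embedding $\|v\|_{L^{2n/(n-2)}}\leq c\|v\|$, which follows from the norm equivalence recorded after \eqref{eq:4}.

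The integrand would then be split into three natural pieces: an $\e$-piece $\underline{u}^{p-\e}-\underline{u}^p$; an interaction piece $\underline{u}^p-\sum_i\a_i^p(\pi\d_{a_i,\l_i})^p$; and a projection-correction piece $\sum_i\a_i^p\bigl((\pi\d_{a_i,\l_i})^p-\d_{a_i,\l_i}^p\bigr)$. These three contributions should produce respectively the $\e$-term, the $T_3(\e_{ij})$-terms, and the $T_2(\l_i)$-terms in the claimed bound.

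For the $\e$-piece I would use the pointwise inequality $|b^{p-\e}-b^p|\leq c\,\e\,b^p(|\ln b|+1)$ for bounded $b>0$, together with the expansion \eqref{eq:8} applied to the dominant bubble at each point; the resulting integral is controlled by $c\,\e\,\|v\|$. For the interaction piece, I would rely on the elementary bound $|(\sum b_i)^p-\sum b_i^p|\leq c\sum_{i\neq j}b_i^{p-1}b_j$ when $p\geq 2$ (i.e.\ $n\leq 6$) and, when $p<2$ (i.e.\ $n\geq 7$), on the mixed version $|(\sum b_i)^p-\sum b_i^p|\leq c\sum_{i\neq j}\min(b_i^{p-1}b_j,\, b_j^p)$. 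This reduces matters to the classical bubble-interaction estimate $\|\d_{a_i,\l_i}^{p-1}\d_{a_j,\l_j}\|_{L^{2n/(n+2)}}\leq c\,T_3(\e_{ij})$, where the logarithmic factor in $T_3$ for $n\geq 6$ comes from a careful H\"older split of the integral around the midpoint of $a_i$ and $a_j$. For the projection-correction piece, writing $\pi\d_i=\d_i-\th_i$ and applying the same elementary inequality to $(\d_i-\th_i)^p-\d_i^p$ reduces everything to integrals of the type $\int \d_i^{p-1}\th_i|v|$ and $\int \th_i^p|v|$, which are controlled by $c\,T_2(\l_i)\,\|v\|$ using the bounds \eqref{est1}--\eqref{est3} on $\th_{a,\l}$.

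The main technical obstacle is ensuring that the dimension-dependent exponents and logarithmic factors assemble correctly: the $n\geq 6$ case in the interaction piece needs a careful H\"older partition to recover the $(\ln\e_{ij}^{-1})^{(n+2)/(2n)}$ factor in $T_3$, and the $n=4$ case in the projection-correction piece requires tracking the logarithm encoded in $R_{a,\l}^1$, which is precisely the source of $\s_4=1$ in $T_2$. Once these three estimates are assembled, and using that $\a_i$ is close to $1$ on $\mathcal{O}(N,\mu)$, the claimed bound on $|f_\e(v)|$ follows.
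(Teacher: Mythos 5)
Your overall strategy is close to the paper's: both hinge on the identity $\int_\O\d_{a_i,\l_i}^p v=0$, which follows from $\langle v,\pi\d_{a_i,\l_i}\rangle=0$ and the defining PDE for $\pi\d_{a_i,\l_i}$, and both reduce the off-diagonal interactions to bubble-interaction integrals that produce the $T_3(\e_{ij})$ terms (via \eqref{eq:f1} and Lemma \ref{lD}), and the projection corrections to $\int\d_i^{p-1}\th_i|v|$ producing $T_2(\l_i)$. The paper, however, does not isolate an ``$\e$-piece'' $\underline{u}^{p-\e}-\underline{u}^p$; instead it keeps the exponent $p-\e$ through the whole bubble decomposition, writes $\d_i^{p-\e}=\d_i^p\cdot\d_i^{-\e}$, and uses \eqref{eq:8} so that the \emph{entire} factor $c_0^{-\e}\l_i^{-\e(n-2)/2}$ sits in front of $\int\d_i^p v$, which then vanishes exactly.

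There is a genuine gap in your treatment of the $\e$-piece. The pointwise inequality $|b^{p-\e}-b^p|\le c\,\e\,b^p(|\ln b|+1)$ applied with $b=\underline{u}$ cannot give $O(\e\|v\|)$, because near the peak of $\d_{a_i,\l_i}$ one has $|\ln\underline{u}|\sim\frac{n-2}{2}\ln\l_i$, so after H\"older this piece is only controlled by $c\,\e\ln\l_i\,\|v\|$. On $\mathcal{O}(N,\mu)$ the condition is merely $\e\ln\l_i<\mu$, a fixed small constant, so your bound is $O(\mu\|v\|)$ — not a quantity tending to $0$ with $\e$, and far too weak for the lemma and its uses (e.g.\ Proposition \ref{v bar}, Lemma \ref{lS4}). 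The reason the paper escapes the lost $\ln\l$ is that it never Taylor-expands the global factor $c_0^{-\e}\l_i^{-\e(n-2)/2}$: that factor is kept as a multiplier of $\int\d_i^p v$, which is exactly zero. In your decomposition that orthogonality has already been spent once in the initial subtraction of $\sum_i\a_i^p\d_i^p$; inside the $\e$-piece the relevant integral is $\int\underline{u}^p|\ln\underline{u}|\,|v|$, not $\int\d_i^p v$, so the cancellation is no longer visible. To repair this you would have to expand $\underline{u}^p$ around $\a_i^p\d_i^p$ \emph{again} inside the $\e$-piece and invoke the orthogonality for the constant ($\ln\l_i$) part of $\ln\underline{u}$, which amounts to redoing the interaction and projection estimates there — defeating the purpose of splitting off a separate $\e$-piece.

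A minor, nonfatal point: your coefficient $\a_i^p$ in the subtracted quantity differs from the paper's $\a_i^{p-\e}$ by $O(\e|\ln\a_i|)=O(\e\mu)$, which is harmless; and at $n=6$ your $p\ge2$/$p<2$ split and the paper's $n\ge6$/$n\le5$ split coincide since $p=2$ there, so that part is fine.
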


\begin{proof} First, observe that, for  $\beta_{i}>0$ and $\g>1$, it holds:
\be \label{lst2} 
\left| \left(\sum_{i=1}^{N} \beta_{i}\right)^{\g} - \sum_{i=1}^{N} \beta_{i}^{\g} \right|\leq  \begin{cases}
	c \sum_{i \neq j} \left( \beta_{i} \beta_{j} \right)^{ {\g} / {2}} & \mbox{if} \quad \g \leq 2, \\
	c \sum_{i \neq j} \beta_{i}^{\g -1} \beta_{j} & \mbox{if} \quad \g > 2.
\end{cases}
\ee 
Thus, we obtain 
\be \label{eq:f3}
 f_{\varepsilon}(v) = \sum_{i=1}^{N}\alpha_{i}^{p-\varepsilon} \int_{\Omega} \left( \pi \delta_{a_{i},\lambda_{i}} \right)^{p-\varepsilon}v + \sum_{i \neq j} \begin{cases}
			O \left( \int_{\Omega} \left[ (\pi \delta_{a_{i},\lambda_{i}}) ( \pi \delta_{a_{j},\lambda_{j}})\right]^{\frac{p-\varepsilon }{2}} \left| v \right|\right)& \text{ if } \; n\geq 6, \\ 
			O \left( \int_{\Omega}  (\pi \delta_{a_{i},\lambda_{i}})^{p-\varepsilon -1}( \pi \delta_{a_{j},\lambda_{j}}) \left| v \right| \right)& \text{ if }\;  n\leq 5.
		\end{cases}
	\ee 
	Observe that, using \eqref{est0}  and \eqref{eq:8}, we get 
	\begin{align}
		\int_{\Omega} \left[\left( \pi \delta_{a_{i},\lambda_{i}}\right) \left( \pi \delta_{a_{j},\lambda_{j}}\right)\right]^{\frac{p-\varepsilon}{2}}|v| &\notag \leq \int_{\Omega} \left(  \delta_{a_{i},\lambda_{i}}\delta_{a_{j},\lambda_{j}}\right)^{\frac{p}{2}}|v|  \\
		& \leq \left [ \int_{\Omega} \left( \delta_{a_{i},\lambda_{i}}\delta_{a_{j},\lambda_{j}}\right)^{\frac{n}{n-2}} \right ]^{ (n+2) / 2n } \left( \int _{\Omega } \left| v\right|^{\frac{2n}{n-2}}\right) ^ { (n-2) / (2n) } .
	\end{align}
	Note that, Estimate (E2) of \cite{Bahri-book} gives us 
	\begin{align} \label{eq:f1}
		\int_{\mathbb{R}^{n}} \left( \delta_{a_{i},\lambda_{i}}\delta_{a_{j},\lambda_{j}}\right)^{ n / (n-2) } \leq c \varepsilon_{ij}^{\frac{n}{n-2} }\ln(\varepsilon_{ij}^{-1})
	\end{align}
	and using the embedding theorem of $H_{0}^{1}(\Omega) \hookrightarrow L^{\frac{2n}{n-2}}(\Omega)$, we derive that
	\begin{equation}\label{eq:f2}
		\int_{\O}\left[(\pi \d_{a_i,\l_i})(\pi \d_{a_j, \l_j})\right]^{\frac{p-\e}{2}}|v|\leq c \e_{ij}^{\frac{n+2}{2(n-2)}}\left(\ln\e_{ij}^{-1}\right)^{\frac{n+2}{2n}}\|v\|.
	\end{equation}
	In the same way, for $n\leq 5$ (which implies $p-1:=\frac{4}{n-2} >1$), using Lemma 2.2 of \cite{Dammak}, we have 
	\begin{align}
 \int_{\O}(\pi \d_{a_i, \l_i})^{p-\e-1}(\pi \d_{a_j, \l_j})|v| & \leq \int_{\O}\d_{a_i, \l_i}^{p-1}\d_{a_j, \l_j}|v|  \leq  c\|v\|\left(\int_{\O} \d_{a_i,\l_i} ^{(p-1) \frac{2n }{n+2} }\d_{a_j, \l_j}^{\frac{2n}{n+2}}\right)^{ \frac{n+2}{2n} }  \leq c\|v\|\e_{ij} . \label{eq:f4}
	\end{align}
	It remains to estimate the first integral in \eqref{eq:f3}. Observe that, for  $ 0 < b_2 < b_1 $ and $\g  > 1 $,  it holds:
\be \label{lst3} 
	\left( b_1 - b_2 \right)^{\g} = b_1 ^{\g} + O \left( b_1 ^{\g -1} b_2 \right) 
\ee 
thus,  we get
	$$\int_{\O}\left(\pi \d_{a_i, \l_i}\right)^{p-\e}v=\int_{\O}\left(\d_{a_i, \l_i}-\th_{a_i,\l_i}\right)^{p-\e}v=\int_{\O}\d_{a_i, \l_i}^{p-\e}v+O\left(\int_{\O}\d_{a_i, \l_i}^{\frac{4}{n-2}-\e}\th_{a_i,\l_i}|v|\right).$$
The last integral can be deduced following the proof of  Lemma 6.5 of  \cite{2ABE} by using \eqref{eq:8} and we get 
\be \label{R1dv} 
 \int_{\O}\d_{a,\l}^{ \frac{4}{n-2} - \e  }\th_{a,\l} |v| \leq c \int_{\O}\d_{a,\l}^{ 4/(n-2) }\th_{a,\l}|v|\leq c\|v\|T_2(\l) . 
\ee
For the other one, using \eqref{eq:8}, and the fact that $v \perp \pi\d_{a_i, \l_i}$, we get:
	\begin{align}\label{eq:f5}
\int_{\O}\d_{a_i, \l_i}^{p-\e} v = & c_{0}^{- \e}\l _i ^{-\e\frac{n-2}{2}}\int_{\O}\d_{a_i,\l_i}^{p}v+O\left(\e\int_{\O}\d_{a_i, \l_i}^{p}\ln(1+\l^{2}_i|x-a_{i}|^{2})|v|\right)\notag\\
		= &	O\left(\e\|v\|\left(\int_{\O}\d_{a_i, \l_i}^{\frac{2n}{n-2}}\ln^{\frac{2n}{n+2}}(1+\l_{i} ^2 |x-a_{i}|^{2})\right)^{\frac{n+2}{2n}}\right)	= O\left(\e \|v\|\right).
	\end{align}
	Thus,  \eqref{R1dv} and \eqref{eq:f5} imply that 
	\begin{equation}\label{eq:f6}
		\int_{\O}\left(\pi \d_{a_i, \l_i}\right)^{p-\e}v=O \left(\e\|v\|+\|v\|T_{2}(\l_{i})\right) . 
	\end{equation}
	Putting \eqref{eq:f6}, \eqref{eq:f4} and \eqref{eq:f2} in \eqref{eq:f3}, the result follows.
\end{proof}

Combining  \eqref{ie1}, Lemma \ref{lfv} and the fact that $ Q_\e $ is a positive definite quadratic form, we deduce that
 \begin{pro}\label{v bar} 
 Let $(\alpha,a,\lambda)\in \mathcal{O} (N,\mu)$ and $\underline{u} := \sum_{i=1}^{N}\alpha_{i}\pi \delta_{a_{i},\lambda_{i}}$. Then for $\varepsilon$ small, there exists a unique $\ov{v}\in E_{a,\lambda }^{\perp }$ satisfying:
 \begin{align}
  \langle \n I_{V, \varepsilon } (\underline{u}+\ov{v}) , h \rangle  = 0 \quad \forall h \in E_{a,\lambda }^{\perp } .  \label{vbar}
 \end{align}
 Moreover, $\ov{v}$ satisfies 
 $$ \left\|{\ov{v}}\right\| \leq   c R_{v}(a,\lambda ) \quad  \mbox{ with }\quad  R_{v}(a,\lambda ) :=  \varepsilon + \sum_{i\neq j} T_3 ( \varepsilon_{ij} )  +  \sum _ { 1 \leq i \leq N } T_2 (\l_i) $$
 where $ T_2 $ and $ T_3 $ are defined in Lemma \ref{lfv}.
 \end{pro}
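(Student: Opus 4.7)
The plan is to run a standard Lyapunov--Schmidt reduction: find $\ov v$ as the unique fixed point of a contraction on a small ball in $E_{a,\l}^{\perp}$. Setting $\psi(v):=I_{V,\e}(\underline u+v)$ and differentiating directly (rather than differentiating \eqref{ie1}), one has for every $h\in E_{a,\l}^{\perp}$, using $\langle\underline u,h\rangle=0$,
\[
\langle \n I_{V,\e}(\underline u+v),h\rangle \;=\; -f_\e(h) + B_\e(v,h) - \int_\O N(v)\,h,
\]
where $B_\e(v,h):=\langle v,h\rangle-(p-\e)\int_\O \underline u^{p-\e-1}vh$ is the polarization of $Q_\e$, and $N(v):=(\underline u+v)^{p-\e}-\underline u^{p-\e}-(p-\e)\underline u^{p-\e-1}v$ is the first-order Taylor remainder of $t\mapsto t^{p-\e}$ at $t=\underline u$. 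Riesz representation yields $\tilde f_\e\in E_{a,\l}^{\perp}$ and a bounded self-adjoint operator $A_\e$ on $E_{a,\l}^{\perp}$ with $\langle\tilde f_\e,h\rangle=f_\e(h)$ and $\langle A_\e v,h\rangle=B_\e(v,h)$; equation \eqref{vbar} then becomes the fixed-point equation $v=A_\e^{-1}\bigl(\tilde f_\e+\tilde N(v)\bigr)$ on $E_{a,\l}^{\perp}$, where $\tilde N(v)$ is the Riesz representative of $h\mapsto\int_\O N(v)h$.

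The positivity of $Q_\e$ recalled just before Lemma \ref{lfv} (Proposition 3.1 of \cite{2ABE}) gives $\langle A_\e v,v\rangle\geq c\|v\|^2$ uniformly in $(\a,a,\l)\in\mathcal O(N,\mu)$ and in $\e$ small, hence $\|A_\e^{-1}\|\leq c$ independently of the parameters. Combined with Lemma \ref{lfv} this gives $\|A_\e^{-1}\tilde f_\e\|\leq cR_v(a,\l)$, which fixes the natural radius $r_\e:=CR_v(a,\l)$ at which to apply Banach's contraction principle.

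The key pointwise bound on the remainder, obtained from elementary convexity inequalities in the spirit of \eqref{lst1} but of first-order Taylor type, is $|N(v)|\leq c\bigl(\underline u^{p-\e-2}v^2+|v|^{p-\e}\bigr)$ when $p\geq 2$ and $|N(v)|\leq c|v|^{p-\e}$ when $p<2$. Testing against $h\in E_{a,\l}^{\perp}$ and applying H\"older with the Sobolev embedding $H^1_0(\O)\hookrightarrow L^{p+1-\e}(\O)$ (uniformly in $\e$) will give $\|\tilde N(v)\|\leq c(\|v\|^2+\|v\|^{p-\e})=o(\|v\|)$ on the ball of radius $r_\e$; an analogous Lipschitz estimate for $\tilde N(v_1)-\tilde N(v_2)$ shows that $v\mapsto A_\e^{-1}\bigl(\tilde f_\e+\tilde N(v)\bigr)$ is a $\tfrac12$-contraction of $\overline{B(0,r_\e)}\subset E_{a,\l}^{\perp}$ into itself, provided $\e$ and $\mu$ are small enough. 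Banach's theorem then supplies a unique $\ov v\in E_{a,\l}^{\perp}$ solving \eqref{vbar} with $\|\ov v\|\leq cR_v(a,\l)$.

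The main technical point to watch is the nonlinear remainder in low dimensions, especially $n=4$ where $p=3$ and the term $\underline u^{p-\e-2}v^2$ becomes $\underline u^{1-\e}v^2$, and the borderline $n=6$, $p=2$: there one must use H\"older with an exponent slightly below the critical one to absorb the $\e$-perturbation of $p$, and verify throughout that all constants stay uniform in the parameters $(\a,a,\l)\in\mathcal O(N,\mu)$, so that the resulting bound is indeed governed by $R_v(a,\l)$ and by nothing smaller.
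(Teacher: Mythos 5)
Your argument is correct and is essentially the paper's own proof written out in detail: the authors simply combine the expansion \eqref{ie1}, Lemma \ref{lfv} and the positive-definiteness of $Q_\e$, which is exactly the triple (bounded linear form $f_\e$, coercive quadratic form $Q_\e$, superquadratic remainder) on which your contraction scheme rests, and your bound $\|\ov{v}\|\leq c\,R_v(a,\l)$ comes out of $\|A_\e^{-1}\|\leq c$ together with Lemma \ref{lfv} just as in the paper. Working at the level of the gradient rather than of the functional is only a cosmetic difference (and arguably cleaner, since it avoids differentiating the $o(\|v\|^2)$ error term in \eqref{ie1}).
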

 
\subsection{ Expansion of the gradient of the associated functional}

In this section, we will provide asymptotic expansions of the gradient of functional $I_{V,\varepsilon}$. We begin with the expansion with respect to the gluing parameter $\a_i's$,  specifically proving:
\begin{pro}\label{gradalpha}
	Let $(\alpha, a,\lambda) \in \mathcal{O} (N,\mu)$, $v \in E_{a,\lambda}^{ \perp}$ and $u= \sum_{i=1}^{N}\alpha_{i}\pi\delta_{a_{i},\lambda_{i}}+v$. Then, for $\varepsilon$ small and $i\in \left\{1, \cdots ,N \right\}$, we have:
	$$
	\left< \nabla I_{V,\varepsilon}(u),\pi \delta_{a_{i},\lambda_{i}}\right> = \alpha_{i}S_{n} \left( 1-\alpha_{i}^{p-1-\varepsilon}\lambda_{i}^{ -\varepsilon (n-2) / 2 }\right)+O(R_{\alpha_{i}})
	$$
	where 
	$$  S_n:= \int_{\R^n } \d_{0,1} ^{ 2n / (n-2) }, \quad   \quad R_{\alpha_{i}}:=\varepsilon+T_{2}(\lambda_{i})+\left\| v\right\|^{2}+\sum_{i\neq j}\varepsilon_{ij}
	$$
and $ T_2$ is defined in Lemma \ref{lfv}.
\end{pro}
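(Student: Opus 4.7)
The plan is to use the equation $-\Delta \pi\d_{a_i,\l_i} + V\pi\d_{a_i,\l_i} = \d_{a_i,\l_i}^p$ satisfied by the projection. A direct integration by parts gives the identity
\[
\langle \n I_{V,\e}(u), \pi\d_{a_i,\l_i}\rangle = \int_\O u\,\d_{a_i,\l_i}^p - \int_\O u^{p-\e}\,\pi\d_{a_i,\l_i} .
\]
Since $v \in E_{a,\l}^\perp$, applying the same integration by parts to $v$ alone yields $\int_\O v\,\d_{a_i,\l_i}^p = \langle v, \pi\d_{a_i,\l_i}\rangle = 0$, so $v$ disappears from the first integral. For the remaining piece of the first integral, I would split $\int \underline{u}\,\d_{a_i,\l_i}^p$ into the diagonal $j=i$ and off-diagonal $j\ne i$ contributions: writing $\pi\d_{a_i,\l_i} = \d_{a_i,\l_i} - \th_{a_i,\l_i}$, the diagonal one yields $\a_i S_n + O(T_2(\l_i))$ thanks to the pointwise bound $\th_{a,\l}\le cR^1_{a,\l}\d_{a,\l}$ of \eqref{est2}--\eqref{est3}, while the off-diagonal terms are controlled by the standard interaction integral $\int \d_{a_j,\l_j}\d_{a_i,\l_i}^p = O(\e_{ij})$.

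For the second integral I would apply the Taylor expansion \eqref{lst1} with $b_1=\underline{u}$ and $b_2=v$, keeping the terms $\underline{u}^{p-\e}$ and $(p-\e)\underline{u}^{p-\e-1}v$, and bounding the pointwise remainder by $c|\underline{u}|^{p-\e-3}|v|^3+c|v|^{p-\e}$ (or $c|v|^{p-\e}$ if $p\le 3$); integrated against $\pi\d_{a_i,\l_i}$ and handled by H\"older together with the Sobolev embedding, this remainder becomes $O(\|v\|^2)$. Expanding $\underline{u}^{p-\e}$ further via \eqref{lst2}, the cross bubble-bubble integrals produce $O(\sum_{j\neq i}\e_{ij})$, and the diagonal piece $\a_i^{p-\e}\int_\O (\pi\d_{a_i,\l_i})^{p+1-\e}$ is evaluated by writing $\pi\d=\d-\th$, using \eqref{lst3} to reduce to $\a_i^{p-\e}\int_\O \d_{a_i,\l_i}^{p+1-\e}$ modulo $O(T_2(\l_i))$, and finally invoking the $\e$-expansion \eqref{eq:8} to obtain $\a_i^{p-\e}\l_i^{-\e(n-2)/2}S_n + O(\e)$.

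The main delicate step is the linear-in-$v$ contribution $(p-\e)\int \underline{u}^{p-\e-1}\pi\d_{a_i,\l_i}\,v$, because the Proposition only allows a quadratic $\|v\|^2$ error. I would bound $\underline{u}^{p-\e-1}\pi\d_{a_i,\l_i}$ by means of \eqref{lst2} through a sum of diagonal pieces $(\pi\d_{a_j,\l_j})^{p-\e}$ together with cross pieces handled exactly as in the proof of Lemma \ref{lfv}; this shows that this linear piece is at most $c\|v\|\bigl(\e + T_2(\l_i) + \sum_{j\neq i} T_3(\e_{ij})\bigr)$. Young's inequality, combined with the elementary bounds $T_2(\l_i)^2 \leq T_2(\l_i)$ and $T_3(\e_{ij})^2 \leq \e_{ij}$ (using the exponent $(n+2)/(n-2)>1$ for $n\ge 6$), absorbs this into $\|v\|^2+\e+T_2(\l_i)+\sum_{j\ne i}\e_{ij}=R_{\a_i}$. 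Collecting everything gives
\[
\a_i S_n - \a_i^{p-\e}\l_i^{-\e(n-2)/2}S_n + O(R_{\a_i}) = \a_i S_n\bigl(1-\a_i^{p-1-\e}\l_i^{-\e(n-2)/2}\bigr) + O(R_{\a_i}) ,
\]
which is the claim.
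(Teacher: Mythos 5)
Your argument is correct and follows essentially the same route as the paper's proof. The paper decomposes $\langle u,\pi\delta_{a_i,\lambda_i}\rangle$ as $\sum_j \alpha_j\langle\pi\delta_{a_j,\lambda_j},\pi\delta_{a_i,\lambda_i}\rangle$ and cites (E1) of Bahri plus its Lemma 6.5/6.6 machinery (Lemma \ref{vphi} in the text), whereas you reach the same quantities through the PDE identity $\langle u,\pi\delta_{a_i,\lambda_i}\rangle=\int_\Omega u\,\delta_{a_i,\lambda_i}^p$ and re-derive the $v$-linear bound along the lines of Lemma \ref{lfv}; the expansions, the role of \eqref{est2}--\eqref{est3} for the diagonal term, the $\varepsilon$-expansion \eqref{eq:8}, and the Young-inequality absorption of $c\|v\|(\varepsilon+T_2(\lambda_i)+\sum T_3(\varepsilon_{ij}))$ into $R_{\alpha_i}$ are all as in the paper (the latter step is left implicit there but is exactly what you spell out). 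One minor notational slip: the second integral in your opening identity should read $\int_\Omega |u|^{p-1-\varepsilon}u\,\pi\delta_{a_i,\lambda_i}$ rather than $\int_\Omega u^{p-\varepsilon}\pi\delta_{a_i,\lambda_i}$, since $u=\underline{u}+v$ is not a priori nonnegative; the subsequent Taylor expansion should accordingly be \eqref{lst4} (as the paper uses) rather than \eqref{lst1}, though both yield the same conclusion here.
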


\begin{proof}
	Observe that, since $v \in E_{a,\lambda}^{ \perp}$, it holds:
	\begin{align}\label{eq:e1}
		\left< \nabla I_{V,\varepsilon},\pi \delta_{a_{i},\lambda_{i}}\right>= \sum_{j=1}^{N}\alpha_{j}\left< \pi \delta_{a_{j},\lambda_{j}},\pi \delta_{a_{i},\lambda_{i}}\right>-\int_{\Omega}\left|u\right|^{p-1-\varepsilon}u\pi \delta_{a_ {i},\lambda_{i}}.
	\end{align}
	First, for $ j \neq i $, we have:
	\begin{align} 
		\left< \pi \delta_{a_{j},\lambda_{j}},\pi \delta_{a_{i},\lambda_{i}}\right> & = \int_{\Omega}\n \left( \pi \delta_{a_{j},\lambda_{j}}\right)\n \left( \pi \delta_{a_{i},\lambda_{i}}\right)+\int_{\Omega}V \left( \pi \delta_{a_{j},\lambda_{j}}\right) \left( \pi \delta_{a_{i},\lambda_{i}}\right)  \nonumber \\
	& = \int_{\Omega} \left( -\D +V\right)\left( \pi \delta_{a_{j},\lambda_{j}}\right) \left( \pi \delta_{a_{i},\lambda_{i}}\right) \leq \int_{ \O }\delta_{a_{j},\lambda_{j}}^{\frac{n+2}{n-2} } \delta_{a_{i},\lambda_{i}}  \leq c \varepsilon_{ij}. \label{dij} 
	\end{align}
	by using Estimate (E1) of \cite{Bahri-book}.
	Second, for $j=i$, as in the previous computation, we have 
$$
\left\| \pi \delta_{a_{i},\lambda_{i}} \right\|^{2} = \int_{\Omega} \delta_{a_{i},\lambda_{i}}^{\frac{n+2}{n-2}}\left(\pi \delta_{a_{i},\lambda_{i}}\right)= \int_{\Omega} \delta_{a_{i},\lambda_{i}}^{\frac{2n}{n-2}}-\int_{\Omega} \delta_{a_{i},\lambda_{i}}^{\frac{n+2}{n-2}}\theta_{a_{i},\lambda_{i}}. 
$$ 
	The estimate of the first integral is well known and we have 
$$
\int_{\Omega} \delta_{a_{i},\lambda_{i}}^{\frac{2n}{n-2}} = S_{n}+O \left( \frac{1}{\lambda_{i}^{n}} \right).
$$
	For the second integral, let $B_{i}:= B \left( a_{i}, d_0 \right) $, using  \eqref{est1} and  \eqref{est2}, easy computations imply that 
	\begin{align}\label{eq:e3}
		\int_{\Omega} \delta_{a_{i},\lambda_{i}}^{\frac{n+2}{n-2}}\theta_{a_{i},\lambda_{i}} & \leq c \int_{B_{i}}R_{1}(x,a_{i},\lambda_{i}) \delta_{a_{i},\lambda_{i}}^{\frac{2n}{n-2}}+ \int_{\Omega \backslash B_{i}} \delta_{a_{i},\lambda_{i}}^{\frac{2n}{n-2}}  \leq c T_{2}(\lambda_{i}) + \frac{c}{\lambda_{i}^{n}}  \leq c T_{2}(\lambda_{i})  .
	\end{align}
Thus we deduce that 
\be \label{eqq:e1}	
\left\| \pi \delta_{a_{i},\lambda_{i}} \right\|^{2} = S_n + O  \big( \lambda^{-2} \, \, \,   \mbox{ if } \;\, n\geq 5\quad ; \quad   \lambda^{-2}\ln\l  \, \, \,   \mbox{ if } \;\, n=4 \quad ; \quad  \lambda^{-1} \, \, \,   \mbox{ if } \;\, n=3 \big) . \ee
	Now, we focus on the last term in \eqref{eq:e1}. Observe that, for 
 $ b_1 ,  b_2 ,  z \in \R $ such that $|z|\leq \beta | b_1 | $ for some positive constant $\beta$ and $\gamma > 0$,  it holds 
 \begin{align} 
 &  \left| b_1 + b_2 \right| ^{\gamma -1} \left (  b_1 + b_2 \right ) z = | b_1 | ^ {\gamma-1 } b_1 z + \gamma  | b_1 | ^ {\gamma -1} b_2 z + O\left( | b_1 | ^ {\gamma -1} b_2 ^{2} + \left| b_2 \right|^{\gamma +1}\right ) ,  \label{lst4} \\
 & \left| b_1 + b_2 \right| ^{\gamma} = \left| b_1 \right|^{\gamma} + O \left( | b_1 |^{\gamma -1} | b_2 | + | b_2 | ^{\gamma} \right) . \label{lst5} 
 \end{align}
Thus, let $\underline{u}:= \sum_{j=1}^{N} \alpha_{j} (\pi \delta_{a_{j},\lambda_{j}} )$, using \eqref{lst4},  we get
\begin{align} \label{eq:e4}
		\int_{\Omega} \left| u \right| ^{p-\varepsilon-1} u (\pi \delta_{a_{i},\lambda_{i}} ) = \int_{\Omega} \left( \underline{u} \right) ^{p-\varepsilon}  (\pi \delta_{a_{i},\lambda_{i}} )+(p-\varepsilon) \int_{\Omega} \left( \underline{u} \right) ^{p-\varepsilon-1}v (\pi \delta_{a_{i},\lambda_{i}} )+O\left( \left\| v \right\| ^{2} \right). 
	\end{align}
	The first integral in the right hand side of \eqref{eq:e4} can be written as (using  \eqref{lst5},  \eqref{lst3} and  \eqref{est0})
	\begin{align*}
	\int_{\Omega} \left( \underline{u} \right) ^{p-\varepsilon}  (\pi \delta_{a_{i},\lambda_{i}} ) & = \alpha_{i}^{p-\varepsilon}\int_{\Omega} (\pi \delta_{a_{i},\lambda_{i}} )^{p+1-\varepsilon}+\sum_{j \neq i}O\Big( \int_{\Omega}  \delta_{a_{i},\lambda_{i}} ^{p-\varepsilon} \delta_{a_{j},\lambda_{j}} + \int_{\Omega}  \delta_{a_{j},\lambda_{j}} ^{p-\varepsilon}  \delta_{a_{i},\lambda_{i}} \Big) \notag \\
	& = \alpha_{i}^{p-\varepsilon}\int_{\Omega}\delta_{a_{i},\lambda_{i}}^{p+1-\varepsilon} + O \Big( \int_{\Omega}\delta_{a_{i},\lambda_{i}}^{p-\varepsilon}\theta_{a_{i},\lambda_{i}}+\sum_{ j \neq i }\int_{\Omega} \delta_{a_{i},\lambda_{i}}^{p}\delta_{a_{j},\lambda_{j}}+\int_{\Omega} \delta_{a_{j},\lambda_{j}}^{p}\delta_{a_{i},\lambda_{i}} \Big) \notag \\
		& = \alpha_{i}^{p-\varepsilon} c_{0}^{-\varepsilon} \lambda_{i}^{-\varepsilon (n-2) /  2 } S_{n} +O \Big( \varepsilon + \frac{1}{\lambda_{i}^{n}}+T_{2}(\lambda_{i})+\sum_{i\neq j} \varepsilon_{ij} \Big) , 
	\end{align*}
where we have used $(64)$ of \cite{EM}, \eqref{eq:e3} and the last inequality in \eqref{dij}. Concerning the second integral in \eqref{eq:e4}, it is computed in Lemma \ref{vphi}.\\
	Combining the previous estimates, the proof follows.
\end{proof}

We will now present a balancing formula that involves the rate of concentration $\l_i$ and the self-interaction of the bubbles $\e_{ij}$. Specifically, we prove:
\begin{pro}\label{gradlam}
 Let $ n \geq 4$, $(\alpha,a,\lambda)\in \mathcal{O} (N,\mu)$, ${v}\in E_{a,\lambda }^{\perp }$ and $ u=\sum_{j=1}^{N}\alpha_{j}\pi \delta_{a_{j},\lambda_{j}}+v$. For $\varepsilon$ small and $i\in \left\{ 1, ... , N \right\}$, we have 
 \begin{align*}
 \Big \langle \nabla I_{V,\varepsilon }(u) ,\lambda_{i} &  \frac{\partial(\pi \delta_{a_{i} ,  \lambda _{i}})}{\partial \lambda_{i}}\Big \rangle  =  \sum_{j\neq i} \a_j\, \ov{c}_2 \l_i \frac{\partial \e_{ij}}{\partial \l_i} \Big(1- \a_i^{p-1-\e} \, \l_i^{ -\e (n-2) / {2}} - \a_j^{p-1-\e} \, \l_j^{ - \e (n-2) / {2}} \Big) \\
&  +  c_{0}^{-\varepsilon}\lambda_{i}^{-\varepsilon \frac{n-2}{2}}c_{2}\alpha_{i}^{p-\varepsilon}\varepsilon   
   - c(n) \alpha_{i}\frac{\ln^{\sigma_{n}}(\lambda_{i})}{\lambda_{i}^{2}} V(a_i) \left ( 2\alpha_{i}^{p-\varepsilon -1} c_{0}^{-\varepsilon }\lambda_{i}^{-\varepsilon \frac{n-2}{2}}-1 \right ) + O ( R_{\l_i} ), 
   \end{align*}
  where
\begin{align*}
 c(4) & :=  c_{0}^{2}\,mes(\mathbb{S}^{3}) \quad , \qquad c(n):=\frac{n-2}{2}c_{0}^{2}\int_{\mathbb{R}^{n}}\frac{|x|^{2}-1}{(1+|x|^{2})^{n-1}} dx >0\quad\mbox{if n}\geq 5, \\
  c_2 & := \Big(\frac{n-2}{2} \Big) ^{2} c_{0}^{\frac{2n}  { n-2 }} \int_ {\mathbb{R}^{n}} \frac{(\left|x\right|^{2}-1)}{(1+\left|x\right|^{2})^{n+1}} \ln(1+\left|x\right|^{2})dx  > 0 \quad ,  \quad \ov{c}_2   := c_0^{\frac{2n}{n-2}} \int_{\R^n } \frac{1}{ ( 1+ | x |^2 )^{(n+2)/2} }dx  , \\ 
  R_{\l_i} & :=   \varepsilon^{2}+\left\| v\right\|^{2}  + \frac{1}{\lambda_{i}^{n-2}}+\frac{1}{\lambda_{i}^{4}} + (\mbox{if}\; n=6)\frac{ \ln\lambda_{i}}{\lambda_{i}^{4}}   +  \sum_{k \neq j }\e_{kj}^{\frac{n}{n-2}} \ln (\e_{kj}^{-1}) \\
 & \qquad  \qquad \qquad +  \sum_{j \neq i } \Big( \frac{ 1} { (\l_j  \l_i ) ^{ (n-2) / 2 } } +\e_{ij} \Big( \Xi_{ij} +  \frac{ 1} { \l_j ^{3/2}} +  \frac{ 1} { \l_i ^{3/2}}  \Big) \Big)  
  \end{align*}
  and where $ \Xi_{ij} $ is defined in Lemma \ref{dj-thetai}.
\end{pro}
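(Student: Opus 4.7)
\bigskip

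\noindent\textbf{Proposed proof plan.} The starting point is to split
\bas
\Big\langle \n I_{V,\e}(u), \l_i \frac{\partial \pi\d_{a_i,\l_i}}{\partial \l_i}\Big\rangle
= \sum_{j=1}^N \a_j \Big\langle \pi\d_{a_j,\l_j}, \l_i \frac{\partial \pi\d_{a_i,\l_i}}{\partial \l_i}\Big\rangle - \int_\O |u|^{p-1-\e} u \,\l_i \frac{\partial \pi\d_{a_i,\l_i}}{\partial \l_i},
\eas
using $v\in E_{a,\l}^\perp$ to drop the contribution of $v$ from the bilinear term. The equation $(-\D+V)\pi\d_{a_j,\l_j}=\d_{a_j,\l_j}^p$ converts each inner product into $\int_\O \d_{a_j,\l_j}^p \,\l_i\partial_{\l_i}\pi\d_{a_i,\l_i}$. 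For $j\neq i$, standard interaction estimates (Appendix Estimates (E1)-(E2) of \cite{Bahri-book}) together with \eqref{est2}-\eqref{est3} yield $\a_j\ov{c}_2\,\l_i\partial_{\l_i}\e_{ij}$ modulo errors absorbable in $R_{\l_i}$. For $j=i$, write $\pi\d_i=\d_i-\th_i$; the $\R^n$-integral $\int_{\R^n}\d_i^p\l_i\partial_{\l_i}\d_i$ vanishes by scale invariance, leaving the boundary tail $O(\l_i^{-n})$ together with $-\int_\O \d_i^p\l_i\partial_{\l_i}\th_i$, which via the Robin-function expansion of $\th_{a_i,\l_i}$ contributes the single $-c(n)\a_i V(a_i)\ln^{\s_n}(\l_i)/\l_i^2$ term (the ``$-1$'' inside the parenthesis).

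For the nonlinear piece $B:=\int_\O |u|^{p-1-\e}u\,\l_i\partial_{\l_i}\pi\d_i$, I apply the pointwise Taylor identity \eqref{lst4} with $b_1=\underline{u}$ and $b_2=v$ to obtain
\bas
B = \int_\O \underline{u}^{p-\e}\,\l_i\frac{\partial \pi\d_i}{\partial \l_i} + (p-\e)\int_\O \underline{u}^{p-\e-1}\,v\,\l_i\frac{\partial \pi\d_i}{\partial \l_i} + O(\|v\|^2).
\eas
The $v$-linear term is controlled by Lemma~\ref{vphi} (invoked in the previous proof), producing contributions already appearing in $R_{\l_i}$. In the remaining integral I expand $\underline{u}^{p-\e}$ via \eqref{lst2}: the diagonal part $\sum_j \a_j^{p-\e}(\pi\d_j)^{p-\e}$ and off-diagonal cross terms of the form $\int (\pi\d_i)^{p-\e-1}\pi\d_j\,\l_i\partial_{\l_i}\pi\d_i$ and symmetric.

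The diagonal self-term ($j=i$) is the crux: I use \eqref{lst3} to replace $(\pi\d_i)^{p-\e}$ by $\d_i^{p-\e}-(p-\e)\d_i^{p-\e-1}\th_i$ plus lower order, and apply \eqref{eq:8} to write $\d_i^{-\e}=c_0^{-\e}\l_i^{-\e(n-2)/2}\bigl(1+\tfrac{n-2}{2}\e\ln(1+\l_i^2|x-a_i|^2)\bigr)+O(\e^2\ln^2)$. The $\e=0$ piece, $\int \d_i^p\l_i\partial_{\l_i}\d_i$, vanishes by scaling; the first-order piece in $\e$ is exactly the integral defining $c_2$, whence the term $c_0^{-\e}\l_i^{-\e(n-2)/2}c_2\a_i^{p-\e}\e$. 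The $\d_i^{p-\e-1}\th_i$ cross piece, combined with the $\th_i$-contribution already produced from the bilinear term, delivers the factor $(2\a_i^{p-\e-1}c_0^{-\e}\l_i^{-\e(n-2)/2}-1)$ multiplying $c(n)V(a_i)\ln^{\s_n}(\l_i)/\l_i^2$.

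The off-diagonal cross terms in \eqref{lst2} combine with the $j\neq i$ interactions coming from the bilinear part. Repeating the Taylor expansion in $\e$ on each $\d_j^{-\e}$ and $\d_i^{-\e}$ factor produces exactly the symmetric combination
\bas
\a_j\ov{c}_2 \l_i\frac{\partial \e_{ij}}{\partial \l_i}\Bigl(1-\a_i^{p-1-\e}\l_i^{-\e(n-2)/2}-\a_j^{p-1-\e}\l_j^{-\e(n-2)/2}\Bigr),
\eas
with residuals of order $\e_{ij}(\Xi_{ij}+\l_i^{-3/2}+\l_j^{-3/2})$ and $(\l_i\l_j)^{-(n-2)/2}$ dictated by Lemma~\ref{dj-thetai} and \eqref{est2}. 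Summing all pieces and bounding the leftover terms by $R_{\l_i}$ using \eqref{eq:f1}, \eqref{eq:e3}, \eqref{eq:f4} and Proposition~\ref{v bar} yields the asserted expansion. The principal difficulty is the precise bookkeeping needed to identify (i) the constant $c_2$ as the unique surviving coefficient from the $\e$-Taylor expansion after the leading scale-invariant integral vanishes, and (ii) the exact symmetric form of the interaction factor $1-\a_i^{p-1-\e}\l_i^{-\e(n-2)/2}-\a_j^{p-1-\e}\l_j^{-\e(n-2)/2}$; both rely on carefully matching the $\th$-contributions of the bilinear part against those arising in the nonlinear diagonal and cross terms.
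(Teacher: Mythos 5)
Your plan follows the paper's proof essentially step by step: the same split into a bilinear piece $\langle u,\l_i\partial_{\l_i}\pi\d_i\rangle$ and a nonlinear piece, the same $\pi\d=\d-\th$ decomposition, scale-invariance to kill $\int_{\R^n}\d_i^p\,\l_i\partial_{\l_i}\d_i$, the $\e$-Taylor expansion \eqref{eq:8} producing $c_2$, the Taylor identities \eqref{lst2}--\eqref{lst4} together with \eqref{lst6}, Lemma~\ref{vphi} for the $v$-linear piece, and Lemma~\ref{dj-thetai} plus the Bahri interaction estimates for the off-diagonal terms -- all of which is exactly what the paper does (it simply cites Eq.~(45) and Lemma~6.3 of \cite{2ABE} for the diagonal computations you re-derive). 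One sign slip worth flagging: since $\int\d_i^p\,\l_i\partial_{\l_i}\d_i=O(\l_i^{-n})$, the identity $\langle\pi\d_i,\l_i\partial_{\l_i}\pi\d_i\rangle=\int\d_i^p(\l_i\partial_{\l_i}\d_i-\l_i\partial_{\l_i}\th_i)=c(n)V(a_i)\ln^{\s_n}(\l_i)/\l_i^2+O(\cdots)$ shows that the surviving $-\int\d_i^p\,\l_i\partial_{\l_i}\th_i$ contributes $+\,c(n)\a_i V(a_i)\ln^{\s_n}(\l_i)/\l_i^2$, not $-\,c(n)\a_i V(a_i)\ln^{\s_n}(\l_i)/\l_i^2$ as you wrote; only after combining with the overall prefactor $-c(n)\a_i V(a_i)\ln^{\s_n}(\l_i)/\l_i^2$ in the final formula does this correspond to the ``$-1$'' inside the parenthesis, so watch the sign when you fill in the bookkeeping.
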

\begin{proof}
	We will follow the proof of Proposition $\ref{gradalpha}$ and we need to be more precise in some integrals. Since $v\in E^{\perp}_{a,\l}$, we have 
	$$\Big\langle u, \l_{i}\frac{\partial (\pi\d_{a_i, \l_i})}{\partial\l_{i}}\Big\rangle=\sum\alpha_{j} \Big \langle \pi\d_{a_j, \l_j},\l_{i}\frac{\partial (\pi\d_{a_i, \l_i})}{\partial\l_{i}} \Big\rangle .$$
	First,  for $j=i$, using Lemma 6.3 of \cite{2ABE}, we get
\begin{align}
\Big\langle \pi\d_{a_i, \l_i},\l_{i}\frac{\partial (\pi\d_{a_i, \l_i})}{\partial\l_{i}} \Big\rangle  & = \int_{\O}\d_{a_i, \l_i}^{\frac{n+2}{n-2}} \left(\l_{i}\frac{\partial\d_{a_i, \l_i}}{\partial\l_{i}}-\l_{i}\frac{\partial\th_{a_i, \l_i}}{\partial\l_{i}}\right) \nonumber \\
 & =  c(n) \frac{\ln^{\sigma_{n}}\l_{i}}{\l_{i}^{2}} V(a_i) + O\left(\frac{1}{\l_{i}^{n-2}}+\frac{1}{\l_{i}^{4}}+(\mbox{if } n = 6 )\frac{\ln\l_{i}}{\l_{i}^{4}}\right). \label{eqq:e2} 
\end{align}	
Second, for $j \neq i$, we have 
	\be \label{bn777}
\Big \langle \pi\d_{a_j, \l_j},\l_{i}\frac{\partial (\pi\d_{a_i, \l_i})}{\partial\l_{i}} \Big\rangle =  \int_{\O} \d_{a_j, \l_j}^{\frac{n+2}{n-2}} \Big( \l_{i}\frac{\partial  \d_{a_i, \l_i} }{\partial\l_{i}} - \l_i \frac{\partial  \th_{a_i, \l_i} }{\partial\l_{i}} \Big) . 
\ee 
Observe that, using Estimate F16 of \cite{Bahri-book} and the fact that $ d(a_k, \partial \O ) \geq c > 0 $ for $ k \in \{i,j\}$, we deduce that 
$$ \int_{\O} \d_{a_j, \l_j}^{\frac{n+2}{n-2}} \l_{i}\frac{\partial  \d_{a_i, \l_i} }{\partial\l_{i}} = \ov{c}_2 \l_i \frac{\partial \e_{ij} }{\partial \l_i} + O \Big( \e_{ij}^{\frac{n}{n-2}} \ln (\e_{ij}^{-1}) + \frac{ 1} { \l_j ^{(n+2) / 2 } \l_i ^{ (n-2) / 2 } } \Big) . $$
Furthermore, using Lemma \ref{dj-thetai}, equation \eqref{bn777} becomes
	\be \label{bn7}
\Big \langle \pi\d_{a_j, \l_j},\l_{i}\frac{\partial (\pi\d_{a_i, \l_i})}{\partial\l_{i}} \Big\rangle =  \ov{c}_2 \l_i \frac{\partial \e_{ij} }{\partial \l_i} + O \Big( \e_{ij}^{\frac{n}{n-2}} \ln (\e_{ij}^{-1}) + \frac{ 1} { (\l_j  \l_i ) ^{ (n-2) / 2 } } + \Xi_{ij}  \e_{ij} \Big) . 
\ee 
 Thus, we derive that
\begin{align}
\Big\langle u, \l_{i}\frac{\partial (\pi\d_{a_i, \l_i})}{\partial\l_{i}} \Big\rangle = &  \a_{i}c(n)\frac{\ln^{\sigma_{n}}\l_{i}}{\l_{i}^{2}} V(a_i) + 
\ov{c}_2 \sum_{ j \neq i } \alpha_j \l_i \frac{\partial \e_{ij} }{\partial \l_i}   + O\Big(\frac{1}{\l_{i}^{n-2}}+\frac{1}{\l_{i}^{4}} +  (\mbox{if } n = 6 )\frac{\ln\l_{i}}{\l_{i}^{4}}  \Big) \notag  \\
& +  \sum_{j \neq i} O  \Big( \e_{ij}^{\frac{n}{n-2}} \ln (\e_{ij}^{-1}) + \frac{ 1} { (\l_j  \l_i ) ^{ (n-2) / 2 } } +\e_{ij} \Xi_{ij}  \Big). \label{bn6}
\end{align}
Now, using  $\eqref{lst4}$, we get
	\begin{equation}\label{bn9}
	\int_{\O} |u|^{p-\e-1}u\l_{i}\frac{\partial (\pi\d_{a_i,\l_i})}{\partial\l_{i}}=\int_{\O}(\underline{u})^{p-\e}\l_{i}\frac{\partial (\pi\d_{a_i,\l_i})}{\partial\l_{i}}+(p-\e)\int_{\O}(\underline{u})^{p-\e-1}v\l_{i}\frac{\partial (\pi\d_{a_i,\l_i})}{\partial\l_{i}}+O(\|v\|^{2}).
	\end{equation}	
The last integral is computed in Lemma $\ref{vphi}$. For the first one, we need the following formula. 
Let $ 1 < \gamma \leq 3$,   $ b_j > 0 $ for  $ j \in \{ 1, \cdots, N\} $ and $ s $ be such that $ | s | \leq c b_i $ for some index $i$. Then it holds 
\be \label{lst6} \Big( \sum _{j=1}^N b_j \Big) ^{\gamma} s = \sum _{j=1}^N b_j ^{\gamma} s + \gamma b_i ^{\gamma -1 } \Big( \sum _{j \neq i} b_j \Big) s + \sum _{ k \neq j } O \Big( (b_k b_j) ^{ (\gamma + 1 )/2 } \Big)  . \ee
Thus, we get 
	\begin{align}\label{bn10}
\int_{\O} & (\underline{u})^{p-\e}\l_{i}\frac{\partial (\pi\d_{a_i,\l_i})}{\partial\l_{i}} =  \sum_{ 1 \leq j \leq N} \a_{j}^{p-\e}\int_{\O}(\pi\d_{a_j, \l_j})^{p-\e} \l_{i}\frac{\partial (\pi\d_{a_i,\l_i})}{\partial\l_{i}}  \notag  \\
 & + ( p-\e ) \sum_{  j \neq i}   \int_{\O} ( \a_i \pi\d_{a_i, \l_i})^{p-\e - 1} ( \a_j \pi\d_{a_j, \l_j})\l_{i}\frac{\partial (\pi\d_{a_i,\l_i})}{\partial\l_{i}}  +  \sum_{j\neq k}O \Big( \int_{\O} ( \d_{a_j,\l_j} \d_{a_k, \l_k} ) ^\frac{n}{n-2} \Big).
	\end{align}
 First, we remark that the remainder term is computed in \eqref{eq:f1}. 
 Second, the  first integral, with $ j = i$, is computed in \cite{2ABE} (see Equ. (45)) and we have 
	\begin{align} 
	\int_{\O}(\pi\d_{a_i, \l_i})^{p-\e}\l_{i}\frac{\partial(\pi\d_{a_i, \l_i})}{\partial\l_{i}} = & \,   c_{0}^{-\e}\l_{i}^{-\e\frac{n-2}{2}}\left(-c_{2}\e+2c(n)\frac{\ln^{\sigma_{n}}\l_{i}}{\l_{i}^{2}}\right)  \nonumber \\
 & + O\left(\e^{2}+\frac{1}{\l_{i}^{n-2}}+\frac{1}{\l_{i}^{4}}+(\mbox{if } n = 6 )\frac{\ln \l_{i}}{\l_{i}^{4}}\right). \label{bn11}
\end{align}
Third, we focus on estimating the first integral  of \eqref{bn10} with $ j \neq i$. Using  \eqref{lst3},  \eqref{est0} and \eqref{eq:8}, we deduce that 
\begin{align*}	
 \int_{\O} (\pi\d_{a_j, \l_j})^{p-\e} {\l_{i} } \frac{\partial(\pi\d_{a_i, \l_i})}{\partial \l _{i}}  & = \int_{\O}\Big( \d_{a_j, \l_j} ^ {p-\e} + O \Big(  \d_{a_j, \l_j}  ^{p-1} \theta_{a_j, \l_j} \Big) \Big) {\l_{i} } \frac{\partial \pi \d_{a_i, \l_i} }{\partial \l_{i}}   \\
& = c_0 ^{-\e} \l_j ^{- \e (n-2)/2} \int_\O \d_{a_j, \l_j}  ^p {\l_{i} } \frac{\partial \pi \d_{a_i, \l_i} }{\partial \l_{i}}  \\
 & + O \Big( \int_{ \O }  \theta_{a_j, \l_j} \d_{a_j, \l_j}  ^{ p -1 } \d_{a_i, \l_i}  + \e \int_{ \O} \d_{a_j, \l_j}  ^p  \ln(1+ \l_j ^2 | x-a_j|^2) \d_{a_i, \l_i}  \Big) .
\end{align*} 
Observe that, let $ B_j := B(a_j, d_0 ) $, we have 
 \begin{align} 
 & \int_{ \O } \d_{a_j, \l_j} ^{ p -1 } \theta_{a_j, \l_j}  \d_{a_i, \l_i}  \leq c  \int_{ B_j } R_1(x,a_j,\l_j) \d_{a_j, \l_j} ^{ p  }   \d_{a_i, \l_i} + \frac{c}{ \l_j ^{ (n+2)/2} } \int  \d_{a_i, \l_i} \notag \\
  & \leq c \frac{ \ln ^{\s_n} \l_j }{ \l_j ^2 }  \int_{ B_j } \d_{a_j, \l_j} ^p  \d_{a_i, \l_i} +  \int_{ B_j }  | x-a_j| ^2 | \ln | x-a_j ||^{\s_n} \d_{a_j, \l_j} ^p  \d_{a_i, \l_i} + \frac{c}{ \l_j ^{(n+2) / 2 }} \int  \d_{a_i, \l_i} . \label{wxc2} 
 \end{align} 
Using Holder's inequality and Lemma 2.2 of \cite{Dammak}, we deduce that
\begin{align*} \int_{ B_j } & | x-a_j| ^2  | \ln^{\s_n} | x-a_j || \d_{a_j, \l_j} ^p  \d_{a_i, \l_i}  \leq  \frac{c}{ \l_j ^{3/2} } \int_{ B_j } \frac{ | \ln | x-a_j ||^{\s_n}}{ | x-a_j|  } \d_{a_j, \l_j} ^\frac{n-1}{n-2}  \d_{a_i, \l_i}\\
 &  \leq  \frac{c}{ \l_j ^{3/2} } \Big( \int_{ B_j } \frac{ | \ln | x-a_j ||^{(n-1) \s_n}}{ | x-a_j| ^{n-1} } \Big) ^{1/(n-1)} \Big( \int_{ B_j }  \d_{a_j, \l_j} ^{\left(\frac{n-1}{n-2} \right)^2} \d_{a_i, \l_i}^\frac{n-1}{n-2} \Big) ^{(n-2) / (n-1)}  \leq \frac{c}{ \l_j ^{3/2} } \e_{ij} . 
 \end{align*}
Thus, \eqref{wxc2} becomes 
\be \label{wxc22}  \int_{ \O } \d_{a_j, \l_j} ^{ p -1 } \theta_{a_j, \l_j}  \d_{a_i, \l_i} \leq     \frac{ c }{ \l_j ^{3/2 }}  \e_{ij } + \frac{ c } {\l_i ^{( n-2) /2} \l_j ^{ (n + 2) / 2 } } . \ee
 Hence, we obtain 
 \begin{align*}	\int_{\O}  (\pi\d_{a_j, \l_j})^{p-\e} {\l_{i} } \frac{\partial(\pi\d_{a_i, \l_i})}{\partial \l _{i}}   =  & \,  c_0^{-\e} \l_j ^{ - \e \frac{n-2}{2} }  \langle \pi\d_{a_j, \l_j} ,   {\l_{i} } \frac{\partial(\pi\d_{a_i, \l_i})}{\partial \l _{i}}  \rangle  \\
  &  + O \Big( \frac{ 1 } {\l_i ^{ (n-2) / 2} \l_j ^{ (n + 2) / 2 } }  + \e_{ ij } \Big( \e   +  \frac{ 1 }{ \l_j ^{3/2} }  \Big)    \Big) . 
\end{align*} 
   In the same way, we deduce the estimate of the second integral of \eqref{bn10} and we get 
    \begin{align*}
     p	\int_{\O}   (\pi\d_{a_i, \l_i})^{p-\e-1}  {\l_{i} } \frac{\partial(\pi\d_{a_i, \l_i})}{\partial \l _{i}} \pi\d_{a_j,\l_j}  =  & \,  c_0^{-\e} \l_i ^{ - \e \frac{n-2}{2} }  \langle \pi\d_{a_j, \l_j} ,   {\l_{i} } \frac{\partial(\pi\d_{a_i, \l_i})}{\partial \l _{i}}  \rangle \\
  &   + O \Big( \frac{ 1 } {\l_j ^{ (n-2) / 2} \l_i ^{ (n + 2) / 2 } }    + \e_{ ij } \Big( \e   +  \frac{ 1 }{ \l_i ^{3/2} }  \Big)    \Big) . 
\end{align*}  	
Finally, combining $\eqref{bn6}$, $\eqref{bn9}$, $\eqref{bn10}$, $\eqref{bn7}$, $\eqref{bn11}$ and Lemma $\ref{vphi}$, the result follows.	
\end{proof}

Next, we present a balancing condition that involves the concentration points.
\begin{pro}\label{gradpoint}
 Let $ n \geq 4 $, $(\alpha,a,\lambda)\in \mathcal{O} (N,\mu)$, ${v}\in E_{a,\lambda }^{\perp }$ and $ u=\sum_{j=1}^{N}\alpha_{j} (\pi \delta_{a_{j},\lambda_{j}})+v$. For $\varepsilon$ small and $i\in \left\{ 1, ... , N \right\}$, we have 
 \begin{align*}
 \Big \langle \nabla I_{V,\varepsilon }(u),   \frac{1}{ \lambda_{i} } \frac{\partial(\pi \delta_{a_{i},\lambda _{i}})}{\partial a_{i}}\Big\rangle & =    c_2(n) \alpha_{i}\frac{\ln^{\sigma_{n}}(\lambda_{i})}{\lambda_{i}^{3}} \n V(a_i) \left ( 2\alpha_{i}^{p-\varepsilon -1} c_{0}^{-\varepsilon }\lambda_{i}^{-\varepsilon \frac{n-2}{2}}-1 \right ) \\
 & + \ov{c}_2 \sum_{ j \neq i } \a_j \frac{1}{ \l_{i} }  \frac{\partial \e_{ ij }}{\partial a_{i}} \Big( 1 - c_0^{-\e}  \l_i ^{ -\e \frac{n-2}{2} }  \a_i^{p-\e-1 } - c_0^{-\e}  \l_i ^{ -\e \frac{n-2}{2} } \a_j^{p-\e-1 }    \Big) + O ( R_{a_i}) ,
  \end{align*}
  with 
 \begin{align*}
  R_{a_i} :=  & \sum_{ j \neq i}  \l_j | a_i - a_j | \e_{ ij }^{\frac{n+1}{n-2}} + \left\| v\right\|^{2} + \frac{1}{\lambda_{i}^{4}} + (\mbox{if}\; n=5)\frac{ \ln\lambda_{i}}{\lambda_{i}^{4}}  \\
   &  +\sum  \frac{1}{\lambda_{k}^{n-1}}  + \sum_{k \neq j } \e_{kj}^{n/(n-2} \ln \e_{kj}^{-1}  + \sum  \e_{ij}  \Big( \e + \frac{1}{\l_i} + \frac{1}{\lambda_{j}^{3/2}} \Big) + \e\frac{\ln^{\sigma_{n}} \l_{i}}{\l_{i}^{2}} 
  \end{align*}
where  
$$ 
   c_2(4) := \frac{1}{2} c_{0}^{2}\, meas(\mathbb{S}^{3}) \qquad \mbox{ and }   \qquad c_2(n):=\frac{n-2}{n}c_{0}^{2}\int_{\mathbb{R}^{n}}\frac{|x|^{2}}{(1+|x|^{2})^{n-1}} \quad\mbox{if n}\geq 5.
$$
\end{pro}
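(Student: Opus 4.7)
The plan is to follow the same three–step scheme used in Propositions \ref{gradalpha} and \ref{gradlam}, adapted to the spatial derivative. Since $v\in E_{a,\lambda}^\perp$ and $\lambda_i^{-1}\partial_{a_i}(\pi\delta_{a_i,\lambda_i})\in E_{a,\lambda}$, I first write
\begin{align*}
\Big\langle \nabla I_{V,\varepsilon}(u),\tfrac{1}{\lambda_i}\tfrac{\partial(\pi\delta_{a_i,\lambda_i})}{\partial a_i}\Big\rangle
= \sum_{j=1}^{N}\alpha_j\Big\langle \pi\delta_{a_j,\lambda_j},\tfrac{1}{\lambda_i}\tfrac{\partial(\pi\delta_{a_i,\lambda_i})}{\partial a_i}\Big\rangle
- \int_\Omega |u|^{p-\varepsilon-1}u\,\tfrac{1}{\lambda_i}\tfrac{\partial(\pi\delta_{a_i,\lambda_i})}{\partial a_i},
\end{align*}
and then expand each piece. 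The goal is to identify the dominant $\nabla V(a_i)\ln^{\sigma_n}\lambda_i/\lambda_i^3$ self--interaction and the $\overline c_2\,\lambda_i^{-1}\partial_{a_i}\varepsilon_{ij}$ cross--interaction.

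For the scalar products, converting $\langle\pi\delta_{a_j,\lambda_j},\lambda_i^{-1}\partial_{a_i}(\pi\delta_{a_i,\lambda_i})\rangle$ via $(-\Delta+V)\pi\delta_{a_j,\lambda_j}=\delta_{a_j,\lambda_j}^{p}$ reduces the problem to integrals of type $\int\delta_{a_j,\lambda_j}^{p}\lambda_i^{-1}\partial_{a_i}(\delta_{a_i,\lambda_i}-\theta_{a_i,\lambda_i})$. For $j=i$, Lemma 6.3 of \cite{2ABE} (the analogue used in \eqref{eqq:e2} for the $\lambda$-derivative) yields the leading term $c_2(n)\ln^{\sigma_n}\lambda_i/\lambda_i^{3}\,\nabla V(a_i)$ plus the remainders in $R_{a_i}$; note that the $\delta$-part is $O(\lambda_i^{-n})$ by odd symmetry. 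For $j\neq i$, estimate (F16) of \cite{Bahri-book} gives $\overline c_2\,\lambda_i^{-1}\partial_{a_i}\varepsilon_{ij}$ up to errors $O(\varepsilon_{ij}^{n/(n-2)}\ln\varepsilon_{ij}^{-1}+(\lambda_i\lambda_j)^{-(n-2)/2})$, and the $\theta$-derivative contribution is controlled via Lemma \ref{dj-thetai} by $\Xi_{ij}\varepsilon_{ij}$.

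For the nonlinear term I use \eqref{lst4} to split
\begin{align*}
\int_\Omega|u|^{p-\varepsilon-1}u\,\tfrac{1}{\lambda_i}\tfrac{\partial(\pi\delta_{a_i,\lambda_i})}{\partial a_i}
= \int_\Omega (\underline u)^{p-\varepsilon}\tfrac{1}{\lambda_i}\tfrac{\partial(\pi\delta_{a_i,\lambda_i})}{\partial a_i}
+(p-\varepsilon)\!\int_\Omega (\underline u)^{p-\varepsilon-1}v\,\tfrac{1}{\lambda_i}\tfrac{\partial(\pi\delta_{a_i,\lambda_i})}{\partial a_i}
+O(\|v\|^2),
\end{align*}
and the $v$-linear integral is absorbed via Lemma \ref{vphi}. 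The first integral is developed through the analogue of \eqref{lst6}: the purely diagonal piece $\int(\pi\delta_{a_i,\lambda_i})^{p-\varepsilon}\lambda_i^{-1}\partial_{a_i}(\pi\delta_{a_i,\lambda_i})$ produces, along the lines of Equ.~(45) of \cite{2ABE} (in which the contribution of $\varepsilon$ vanishes because $\int_{\mathbb{R}^n}x_k(1+|x|^2)^{-n-1}\ln(1+|x|^2)\,dx=0$ by odd symmetry), the term $-c_2(n)\alpha_i^{p-\varepsilon}c_0^{-\varepsilon}\lambda_i^{-\varepsilon(n-2)/2}\ln^{\sigma_n}\lambda_i\,\nabla V(a_i)/\lambda_i^3$, which combines with the scalar-product diagonal to give the factor $(2\alpha_i^{p-\varepsilon-1}c_0^{-\varepsilon}\lambda_i^{-\varepsilon(n-2)/2}-1)$ in the statement. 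The cross pieces $\int(\pi\delta_{a_j,\lambda_j})^{p-\varepsilon}\lambda_i^{-1}\partial_{a_i}(\pi\delta_{a_i,\lambda_i})$ and $p\int(\pi\delta_{a_i,\lambda_i})^{p-1-\varepsilon}\pi\delta_{a_j,\lambda_j}\lambda_i^{-1}\partial_{a_i}(\pi\delta_{a_i,\lambda_i})$ are reduced, via \eqref{lst3} and \eqref{eq:8}, to $c_0^{-\varepsilon}\lambda_\bullet^{-\varepsilon(n-2)/2}$ times the corresponding scalar product, thus producing the three-term combination $(1-c_0^{-\varepsilon}\lambda_i^{-\varepsilon(n-2)/2}\alpha_i^{p-\varepsilon-1}-c_0^{-\varepsilon}\lambda_i^{-\varepsilon(n-2)/2}\alpha_j^{p-\varepsilon-1})$ multiplying $\overline c_2\lambda_i^{-1}\partial_{a_i}\varepsilon_{ij}$.

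The main obstacle will be the careful bookkeeping of the $\theta$-type cross integrals. In the $a_i$-derivative setting, the kernel $\lambda_i^{-1}|\partial_{a_i}\delta_{a_i,\lambda_i}|$ behaves like $|x-a_i|/(1+\lambda_i^2|x-a_i|^2)^{n/2}$, which forces one extra power of $\lambda_j|a_i-a_j|$ in the standard Hölder plus Lemma~2.2 of \cite{Dammak} estimate used in \eqref{wxc2}--\eqref{wxc22}. This accounts for the new remainder $\lambda_j|a_i-a_j|\varepsilon_{ij}^{(n+1)/(n-2)}$ appearing in $R_{a_i}$, together with the $\lambda_j^{-3/2}\varepsilon_{ij}$ and $\lambda_i^{-1}\varepsilon_{ij}$ terms that replace the $\lambda$-derivative analogues. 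Collecting all contributions and grouping them with the $\varepsilon$-expansions $c_0^{-\varepsilon}\lambda^{-\varepsilon(n-2)/2}$ yields exactly the claimed expansion.
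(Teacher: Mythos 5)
Your proposal is correct and follows essentially the same route as the paper's proof: the same decomposition of the derivative into scalar products plus the nonlinear integral, the same use of \eqref{eqq:e3}, estimate (F11) of \cite{Bahri-book}, Lemma \ref{dj-thetai}, \eqref{lst4}, \eqref{lst6}, \eqref{wxc22} and Lemma \ref{vphi}, and the same grouping of the cross interactions into the $\ov{c}_2\,\l_i^{-1}\partial_{a_i}\e_{ij}$ term with the three-factor combination. One small slip: the diagonal nonlinear integral $\int(\pi\d_{a_i,\l_i})^{p-\e}\l_i^{-1}\partial_{a_i}(\pi\d_{a_i,\l_i})$ has leading coefficient $-2c_2(n)$ (see Equ.~(53) of \cite{2ABE}, not Equ.~(45), which is the $\l$-derivative analogue), and without that factor of $2$ the stated prefactor $(2\a_i^{p-\e-1}c_0^{-\e}\l_i^{-\e(n-2)/2}-1)$ would not come out; you do state the correct final prefactor, so this reads as a typographical lapse rather than a gap.
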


\begin{proof}
We will follow the proof of Proposition $\ref{gradlam}$. Since $v\in E^{\perp}_{a,\l}$, we have 
	$$ \Big\langle u, \frac{1}{ \l_{i} } \frac{\partial (\pi\d_{a_i, \l_i})}{\partial a_{i}} \Big\rangle = \sum \alpha_{j} \Big\langle \pi\d_{a_j, \l_j}, \frac{1}{ \l_{i} } \frac{\partial (\pi\d_{a_i, \l_i})}{\partial a_{i}} \Big\rangle .$$
First, the scalar product, with $ j = i $, is computed in \cite{2ABE}  and we have
\be \label{eqq:e3} 
 \Big\langle \pi\d_{a_i, \l_i}, \frac{1}{ \l_{i} } \frac{\partial (\pi\d_{a_i, \l_i})}{\partial a_{i}} \Big\rangle = - c_2(n)\frac{\ln^{\sigma_{n}}\l_{i}}{\l_{i}^{3}} \n V(a_i) + O \left( \frac{1}{\l_{i}^{n-1}}+\frac{1}{\l_{i}^{4}}+(\mbox{if }n =5)\frac{\ln\l_{i}}{\l_{i}^{4}}\right). 
 \ee
Second, for $j \neq i$,  using  \eqref{est2} and \eqref{est3},  the last inequality in \eqref{dij} and  (F11) of \cite{Bahri-book}, we get 
	\begin{align}
 \Big \langle \pi\d_{a_j, \l_j}, \frac{1}{ \l_{i} } \frac{\partial (\pi\d_{a_i, \l_i})}{\partial a_{i}} \Big \rangle & = \int_{\O} \d_{a_j, \l_j}^{\frac{n+2}{n-2}}  \Big( \frac{1}{ \l_{i} } \frac{\partial \d_{a_i, \l_i}}{\partial a_{i}}  -  \frac{1}{ \l_{i} } \frac{\partial \theta_{a_i, \l_i}}{\partial a_{i}} \Big) \nonumber \\
 & = \int_{\R^n} \d_{a_j, \l_j}^{\frac{n+2}{n-2}}  \frac{1}{ \l_{i} } \frac{\partial \d_{a_i, \l_i}}{\partial a_{i}} + O \Big( \frac{1}{\l_j ^{(n+2) / 2 } \l_i ^{ n /2}} +  \frac{1}{\l_i } \int_{\O} \d_{a_j, \l_j}^{\frac{n+2}{n-2}}  \delta_{a_i, \l_i}  \Big) \notag \\
  & = \,  \ov{c}_2 \frac{1}{ \l_{i} }  \frac{\partial \e_{ ij }}{\partial a_{i}}  + O \Big( \l_j | a_i - a_j | \e_{ ij }^{\frac{n+1}{n-2}} + \frac{ 1 }{ \l_{i}}   \e_{ij} + \frac{ 1 } {\l_i ^{n/2} \l_j ^{ (n + 2) / 2 } } \Big) . \label{wxc1} 
 \end{align}
Thus, we derive that
\begin{align}\label{bn6p}
 \Big\langle & u, \frac{1}{\l_{i} }\frac{\partial (\pi\d_{a_i, \l_i})}{\partial a_{i}} \Big\rangle = 
 - \a_{i} c_2(n) \frac{\ln^{\sigma_{n}}\l_{i}}{\l_{i}^{3}} \n V(a_i) +  \ov{c}_2 \a_j \frac{1}{ \l_{i} }  \frac{\partial \e_{ ij }}{\partial a_{i}}  \\ 
  &  +   O\Big( \sum_{j\neq i} \Big(  \l_j | a_i - a_j | \e_{ ij }^{\frac{n+1}{n-2}} + \frac{ 1}{ \l_{i} } \e_{ ij } + \frac{ 1 } {\l_i ^{n/2} \l_j ^{ (n + 2) / 2 } } \Big)  + \frac{1}{\l_{ i }^{n - 1}}  +  \frac{1}{\l_{i}^{4}} + (\mbox{if } n =5)\frac{\ln\l_{i}}{\l_{i}^{4}}   \Big). \nonumber 
\end{align}
Now, notice that \eqref{bn9} and \eqref{bn10} hold true with $ \l_i ^{-1} \partial ( \pi \d_{a_i, \l_i}) / \partial a_i $ instead of $ \l_i  \partial ( \pi \d_{a_i, \l_i}) / \partial \l_i $. Furthermore, Equ. (53) of \cite{2ABE} implies that 
  	\begin{align}\label{bn11p}
	\int_{\O}(\pi\d_{a_i, \l_i})^{p-\e} \frac{1}{\l_{i} } \frac{\partial(\pi\d_{a_i, \l_i})}{\partial a_{i}} =  & - 2 c_2(n)  c_{0}^{-\e}\l_{i}^{-\e\frac{n-2}{2}} \frac{\ln^{\sigma_{n}} \l_{i}}{\l_{i}^{3}} \n V(a_i) \nonumber	\\	
	& + O\left( \frac{1}{\l_{i}^{n-1}}+\frac{1}{\l_{i}^{4}}+(\mbox{if } n=5)\frac{\ln \l_{i}}{\l_{i}^{4}} + \e\frac{\ln^{\sigma_{n}} \l_{i}}{\l_{i}^{2}}\right).
	\end{align}
In addition, for $ j \neq i $, using  \eqref{lst3},  \eqref{est0}, \eqref{wxc22} and \eqref{eq:8}, we deduce that 
\begin{align*}	
  & \int_{\O} (\pi\d_{a_j, \l_j})^{p-\e} \frac{1}{\l_{i} } \frac{\partial(\pi\d_{a_i, \l_i})}{\partial a _{i}}   = \int_{\O}\Big( \d_{a_j, \l_j} ^ {p-\e} + O \Big(  \d_{a_j, \l_j}  ^{p-1} \theta_{a_j, \l_j} \Big) \Big) \frac{1}{\l_{i} } \frac{\partial \pi \d_{a_i, \l_i} }{\partial a_{i}}   \\
& = c_0 ^{-\e} \l_j ^{- \e \frac{n-2}{2}} \int_\O \d_{a_j, \l_j}  ^p \frac{1}{\l_{i} } \frac{\partial \pi \d_{a_i, \l_i} }{\partial a_{i}} + O \Big( \int_{ \O }  \theta_{a_j, \l_j} \d_{a_j, \l_j}  ^{ p -1 } \d_{a_i, \l_i}  + \e \int_{ \O} \d_{a_j, \l_j}  ^p  \ln(1+ \l_j ^2 | x-a_j|^2) \d_{a_i, \l_i}  \Big) \\
 & = c_0^{-\e} \l_j ^{ - \e \frac{n-2}{2} }  \langle \pi\d_{a_j, \l_j} ,   \frac{1}{\l_{i} } \frac{\partial(\pi\d_{a_i, \l_i})}{\partial a _{i}}  \rangle  + O \Big( \frac{ 1 } {\l_i ^{ (n-2) / 2} \l_j ^{ (n + 2) / 2 } } + \e_{ij} \Big( \e + \frac{1}{ \l_j ^{3/2}} \Big) \Big)
\end{align*} 

   In the same way, we obtain 
    \begin{align*}
     p	\int_{\O}   (\pi\d_{a_i, \l_i})^{p-\e-1}  \frac{1}{\l_{i} } \frac{\partial(\pi\d_{a_i, \l_i})}{\partial a _{i}} \pi\d_{a_j,\l_j} & =   c_0^{-\e} \l_i ^{ - \e \frac{n-2}{2} }  \langle \pi\d_{a_j, \l_j} ,   \frac{1}{\l_{i} } \frac{\partial(\pi\d_{a_i, \l_i})}{\partial a _{i}}  \rangle \\
  &   + O \Big( \frac{ 1 } {\l_j ^{ (n-2) / 2} \l_i ^{ (n + 2) / 2 } }    + \e_{ ij } \Big( \e   +  \frac{ 1 }{ \l_i ^{3/}2 }  \Big) \Big)   . 
\end{align*}  
Combining the previous estimates, the result follows.	
\end{proof}

\section{Interior blowing up solutions with clustered bubbles}

This section is devoted to the proof of Theorems \ref{th:t1} and \ref{th:t2}. We start by proving Theorem \ref{th:t1}. The proof structure adopts a method comparable to the one presented in \cite{BLR, EM, BEM}. Let $n\geq 4$,  $b\in\O$ be a non-degenerate critical point of $ V $ and  $(\overline{z}_1,\cdots , \overline{z}_N)$ be  a non-degenerate critical point of $F_{b,N}$, where $F_{b, N}$ is defined by \eqref{FYN}. We start by introducing a parameterization of the
neighborhood of the desired constructed solutions. Let
\begin{align}\label{e410}
\mathcal{O}_1    (N,b,\e) & =  \lbrace  (\alpha, \lambda, a, v) \in (\mathbb{R}_+)^N \times (\mathbb{R}_+)^N \times \Omega^N \times H^1(\Omega): v \in E_{a,\lambda} ^\perp,  \quad\| v \| < \sqrt{\varepsilon} ,  \nonumber \\
& \vert\alpha_i -1\vert <  \varepsilon \ln^2 \varepsilon,\, \frac{1}{c} < \frac{\ln ^{\s_n} (\l_i ) }{\lambda^2_i \varepsilon } < c,  \vert a_i - b - \eta ( \e ) \sigma\overline{z}_i \vert \leq  \,\eta ( \e ) \eta_0 \quad\forall 1\leq i \leq N \rbrace, 
\end{align}
where $\eta_0 $ is a small constant,  $ \s_4 = 1$, $\s_n = 0 $ for $ n \geq 5$,  $E_{a,\lambda} $ is defined by \eqref{Eal}, 
\be\label{sigma}
\eta ( \e ) := \begin{cases} 
\e^{\gamma} & \mbox{ if } n \geq 5 \\
| \ln \e | ^{-1/4} & \mbox{ if } n = 4 \end{cases} , \quad 
\gamma=\frac{n-4}{2n}\quad\mbox{and}\quad \sigma = \left(\frac{\ov{c}_2}{c_2(n)}\right)^{1/n} \left(\frac{c_2}{c(n) V ( b )}\right)^\gamma
\ee
with $c_2$, $\ov{c}_2$, $c_2(n)$ and $c(n)$ are the constants defined in Propositions \ref{gradlam} and  \ref{gradpoint}. \\
 In addition, we consider the following function 
$$ \psi_{\varepsilon}: \mathcal{O}_1(N,b,\e) \rightarrow \mathbb{R}, \quad(\alpha, \lambda, a, v) \mapsto \psi_\varepsilon (\alpha, \lambda, a, v) = I_{V,\varepsilon} \left( \sum_{i=1}^{N}\alpha_i \pi\delta_{a_i,\lambda_i} +v\right).
  $$
We notice that $(\alpha, \lambda, a, v)$ is a critical point of $\psi_\varepsilon$ if and only if
 $u=\sum_{i=1}^{N}\alpha_i \pi \d_{a_i, \l_i}  +v$ is a critical point of $I_{V,\varepsilon}$. Thus, we need to look for critical points of  $\psi_\varepsilon$. Since the variable $v$ belongs to $E_{a,\lambda}^{\perp}$, the Lagrange multiplier theorem allows us to get the following proposition.

\begin{pro}\label{p:51} $(\alpha, \lambda, a, v)\in \mathcal{O}_1(N,b,\e)$ 
is a critical point of $\psi_\varepsilon$ if and only if there exists $(A,B,C) \in \mathbb{R}^N \times \mathbb{R}^N \times (\mathbb{R}^n)^N$ such that the following holds: 
\begin{align}
& \frac{\partial \psi_{\varepsilon}}{\partial \alpha_i}(\alpha, \lambda, a, v) = 0  \, \, \forall \, \,  i\in \left\lbrace 1,...,N\right\rbrace ,  \label{Ealphai}\\
& \frac{\partial \psi_{\varepsilon}}{\partial \lambda_i}(\alpha, \lambda, a, v) = B_i \langle v ,  \l_i \frac{\partial^2 (\pi \delta_{a_i,\l_i}) }{\partial \lambda_i^2} \rangle  + \sum_{j=1}^n C_{i j}  \langle v ,  \frac{1}{\l_i} \frac{\partial^2 (\pi \delta_{a_i,\l_i})}{\partial \lambda_{i} \partial a_{i}^{j}} \rangle 
 \, \, \forall \, \,  i \in \left\lbrace 1,...,N\right\rbrace ,  \label{Elambdai} \\
& \frac{\partial \psi_{\varepsilon}}{\partial a_i}(\alpha, \lambda, a, v) = B_i \langle v , \l_i \frac{\partial^2 (\pi \delta_{a_i,\l_i})}{\partial \lambda_i\partial a_i } \rangle + \sum_{j=1}^n C_{i j}  \langle v , \frac{1}{\l_i} \frac{\partial^2 (\pi \delta_{a_i,\l_i}) }{ \partial a_{i}^{j}\partial a_{i}} \rangle 
 \, \, \forall \, \,  i\in \left\lbrace 1,...,N\right\rbrace ,  \label{Eai}\\
&\frac{\partial \psi_{\varepsilon}}{\partial v}(\alpha, \lambda, a, v) = \sum_{k=1}^N \Big( A_{k} \pi \d_{a_k, \l_k} 
  +B_{k} \l_k \frac{\partial (\pi \d_{a_k,\l_k} ) }{\partial \l_k } + \sum_{j=1}^n C_{kj} \frac{1}{\l_k} \frac{\partial (\pi \d_{a_k,\l_k} )}{\partial a_k^j} \Big) . \label{Ev}
\end{align}
\end{pro}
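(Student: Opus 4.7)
My plan is a direct application of the Lagrange multiplier theorem to $\psi_\e$ on the submanifold $\{v\in E_{a,\l}^\perp\}$ of the parameter space. I would first rewrite the constraint $v\in E_{a,\l}^\perp$ as the system of $N(n+2)$ scalar equations
\begin{align*}
G_k(\a,\l,a,v) &:= \langle v,\pi\d_{a_k,\l_k}\rangle = 0, \\
H_k(\a,\l,a,v) &:= \langle v,\l_k\,\partial_{\l_k}\pi\d_{a_k,\l_k}\rangle = 0, \\
K_{k,j}(\a,\l,a,v) &:= \langle v,\l_k^{-1}\partial_{a_k^j}\pi\d_{a_k,\l_k}\rangle = 0,
\end{align*}
for $1\le k\le N$, $1\le j\le n$. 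A preliminary check is that these $N(n+2)$ constraints are independent near every point of $\mathcal{O}_1(N,b,\e)$: their $v$-gradients are precisely the spanning vectors of $E_{a,\l}$, which are linearly independent (in fact almost orthonormal after normalization) by the standard estimates for bubbles whose centers are separated by a fixed multiple of $\eta(\e)$ and whose scales $\l_i$ are comparable.

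The Lagrange multiplier theorem then yields the existence of scalars $A_k,B_k\in\R$ and vectors $C_k\in\R^n$ with
$$d\psi_\e \;=\; \sum_{k=1}^N A_k\,dG_k \;+\; \sum_{k=1}^N B_k\,dH_k \;+\; \sum_{k=1}^N\sum_{j=1}^n C_{k,j}\,dK_{k,j}.$$
Expanding this identity componentwise, the $\a_i$-component immediately gives \eqref{Ealphai}, since the $G_k$, $H_k$, $K_{k,j}$ are $\a$-independent, and the $v$-component gives \eqref{Ev} directly from $\partial_v G_k=\pi\d_{a_k,\l_k}$, $\partial_v H_k=\l_k\partial_{\l_k}\pi\d_{a_k,\l_k}$, and $\partial_v K_{k,j}=\l_k^{-1}\partial_{a_k^j}\pi\d_{a_k,\l_k}$.

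For the $\l_i$- and $a_i$-components only the constraints of index $k=i$ contribute, and each such derivative splits into two kinds of pieces: first-derivative terms which, after using the product rule, reassemble into scalar multiples of $G_i$, $H_i$, or $K_{i,j}$ and therefore vanish on the constraint set (for instance, $\partial_{\l_i}G_i=\l_i^{-1}H_i=0$, while the differentiation of the $\l_i$ factor in $H_i$ yields $\l_i^{-1}H_i=0$, and the differentiation of the $\l_i^{-1}$ factor in $K_{i,j}$ yields $-\l_i^{-1}K_{i,j}=0$), plus second-derivative terms which survive. Collecting the surviving terms reproduces exactly the right-hand sides of \eqref{Elambdai} and \eqref{Eai}. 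I do not anticipate any substantial obstacle; the only mildly technical point is the uniform linear independence of the constraint gradients on $\mathcal{O}_1(N,b,\e)$ noted above, the rest being the bookkeeping just described.
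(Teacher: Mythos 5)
Your proof is correct and takes exactly the approach the paper itself invokes, namely the Lagrange multiplier theorem applied to $\psi_\e$ subject to the $N(n+2)$ orthogonality constraints defining $E_{a,\l}^\perp$; the paper states this in one line without carrying out the componentwise bookkeeping, which you supply correctly (including the use of the constraints to kill the first-derivative remainders and the appeal to the near-orthogonality estimates, Lemma~\ref{ajout1} and \eqref{S1}, for independence of the constraint gradients).
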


To prove Theorem \ref{th:t1}, we will make a careful study of the previous equations. Notice that
\begin{align}
&\frac{\partial \psi_{\varepsilon}}{\partial \alpha_i}(\alpha, \lambda, a, v) = \Big\langle \n  I_{V, \varepsilon} \Big( \sum_{i=1}^{N}\alpha_{i}\pi \delta_{a_i,\l_i} + v \Big), \pi \delta_{a_i,\l_i}   \Big\rangle ,  \label{15i} \\
&\frac{\partial \psi_{\varepsilon}}{\partial \lambda_i}(\alpha, \lambda, a, v) = \Big\langle \n  I_{V, \varepsilon} \Big( \sum_{i=1}^{N}\alpha_{i}\pi \delta_{a_i,\l_i} + v \Big),\alpha_{i}\frac{\partial ( \pi \delta_{a_i,\l_i} ) }{\partial\lambda_{i}}   \Big\rangle , \label{16i} \\
&\frac{\partial \psi_{\varepsilon}}{\partial a_i}(\alpha, \lambda, a, v) = \Big\langle  \n I_{V, \varepsilon} \Big( \sum_{i=1}^{N}\alpha_{i}\pi \delta_{a_i,\l_i} + v \Big) , \alpha_{i}\frac{\partial (\pi \delta_{a_i,\l_i}) }{\partial a_{i}}   \Big\rangle ,  \label{17i} \\
&\frac{\partial \psi_{\varepsilon}}{\partial v}(\alpha, \lambda, a, v)= \n I_{V, \varepsilon} \Big( \sum_{i=1}^{N}\alpha_{i} \pi \delta_ {a_i,\l_i}+ v \Big) .\label{18i}
\end{align}
Let $(\a, \l, a, 0) \in \mathcal{O}_1(N,b,\e)$, which is defined by \eqref{e410}. We are going to solve the system defined by equations \eqref{Ealphai}, \eqref{Elambdai}, \eqref{Eai} and \eqref{Ev}. The strategy of the proof of Theorem \ref{th:t1} is to reduce the problem to a finite dimensional system. Proposition \ref{v bar} allows us to obtain such a reduction by finding $\overline{v}$ verifying the equation \eqref{Ev}.
 Combining equations \eqref{Ealphai}, ..., \eqref{18i}, we see that $u=\sum_{i=1}^N\a_i \pi \d_{a_i , \l_i } + \ov{ v } $ is a critical point of $I_{V, \e} $ if and only if $(\a,\l, a)$ solves the following system: for each $1\leq i\leq N$
\begin{align}
&\left(E_{\alpha_i}\right) \quad \Big \langle \n  I_{V , \varepsilon} (u), \pi \delta_{a_i, \l_i}  \Big\rangle ) = 0 ,   \label{97i}\\
&\left(E_{\lambda_i}\right)\quad \Big\langle \n  I_{V , \varepsilon} ( u),\alpha_{i}\frac{\partial (\pi \delta_{a_i, \l_i}) }{\partial \l_{i}}  
 \Big\rangle = B_i \Big \langle  \ov{ v } , \l_i \frac{\partial^2 ( \pi \delta_ {a_i, \l_i} ) }{\partial \lambda_i^2} \Big \rangle + \sum_{j=1}^n C_{i j} 
\Big\langle \ov{ v } , \frac{1}{\l_i} \frac{\partial^2 ( \pi \delta_{a_i, \l_i} ) }{\partial \lambda_{i} \partial a_{i}^{j}} \Big\rangle  , 
\label{98i} \\
&\left(E_{a_i}\right) \quad \Big \langle  \n I_{ V , \varepsilon} ( u),\alpha_{i} \frac{\partial ( \pi \delta_{a_i, \l_i}) }{\partial a_{i}}  
 \Big\rangle = B_i \Big\langle \ov{ v } , \l_i \frac{\partial^2  ( \pi \delta_{a_i, \l_i}) }{\partial \lambda_i\partial a_i } \Big\rangle + \sum_{j=1}^n C_{i j} 
\Big\langle  \ov{ v } , \frac{1}{\l_i} \frac{\partial^2 ( \pi \delta_{a_i, \l_i}) }{ \partial a_{i}^{j}\partial a_{i}} \Big\rangle .
 \label{99i}
\end{align}
We are looking for $(\a, \l, a) \in \mathcal{O}_2 (N,b,\e)$ solution of the system defined by equations \eqref{97i}, \eqref{98i} and \eqref{99i}, where $ \mathcal{O}_2 (N,b,\e)$ is defined by
\be\label{e411}
\mathcal{O}_2  (N,b,\e ) =\lbrace (\alpha, \lambda, a) \in (\mathbb{R}_+)^N \times (\mathbb{R}_+)^N \times \Omega^N : (\alpha, \lambda, a, 0) \in \mathcal{O}_1(N,b,\e)  \rbrace. 
\ee
In order to work with a simpler system, we make the following change of variables, for $ 1\leq i \leq N $,
\be\label{p2-1bis}
\beta_i=1-\a_i^{p-1}, \quad ( a_i-b)= \eta ( \e ) \sigma (\z_i +\ov{z}_i) , \quad \frac{ 1 }{\l_i ^2 }= \Big( \frac{c_2}{c(n)}\frac{1}{V(b)}\,\e (  | \ln \e | / 2 ) ^{-\s_n} \Big)   (1+\land_i) . 
\ee
Using these changes of variables, it is easy to see that
\begin{align}
& c \leq \l_i | a_i - a_j | \e ^{ 2/n } | \ln \e |^{ - \s_n / 4 }  \leq c' , \\
&\e_{ij} = \frac{1}{(\l_i\l_j|a_i-a_j|^2)^{\frac{n-2}{2}}} \Big( 1 + O \Big( \frac{1}{\l_i ^2 | a_i - a_j | ^2 } +  \frac{1}{\l_j ^2 | a_i - a_j | ^2 } \Big) \Big) \notag \\
 & \quad =   \frac{1}{(\l_i\l_j|a_i-a_j|^2)^{\frac{n-2}{2}}}(1+O( \e ^{ 4 / n } | \ln \e |^{ -\s_n / 2 }  )) = O\left(\e^{2\g +1} | \ln \e |^{ - \s_n / 2 }   \right) , \label{S1}
 \end{align} 
 \begin{align}
\frac{\partial \e_{ij}}{\partial a_i} & = ( n-2 ) \l_i \l_j (a_j - a_i ) \e_{ij}^{ n/(n-2) }  =  \frac{ (n-2)( a_j - a_i ) } { ( \l_i \l_j) ^{(n-2)/2 } | a_i -a_j | ^{n} } \Big( 1+  O( \e ^{ 4 / n } | \ln \e |^{ -\s_n / 2 }  ) \Big) \notag \\
 & \quad = (n-2) \frac{ ( \e | \ln \e | ^{-\s_n} ) ^{(n-2)/2} } { \sigma^{n-1} \eta(\e)^{n-1} }\Big( \frac{ c_2} { c (n) V (b) } \Big)^{\frac{n-2}{2}} \frac{(\z_j +\bar{z}_j -\z_i -\bar{z}_i)}{\left|\z_i +\bar{z}_i -\z_j -\bar{z}_j\right|^n} \times  \label{S22} \\
  & \qquad \qquad \qquad \qquad \qquad \qquad  \qquad \times \Big(1+R(\land_i, \land_j ) \Big)  \Big( 1+  O( \e ^{ 4 / n } | \ln \e |^{ -\s_n / 2 }  ) \Big)   \notag \\
  &  = O \Big( \e^{\g +1} | \ln \e | ^{ - \s_n / 4 } \Big) , \label{S2}
\end{align}
where  
\be\label{S3}
R( \land_i, \land_j)= \frac{n-2}{4}\land_i + \frac{n-2}{4}\land_j + O\left(\land_i^2\right) +O\left(\land_j^2\right).
\ee 

Next, using Propositions  \ref{lfv}, \ref{gradalpha}, \ref{gradlam} and \ref{gradpoint},   we deduce  that the following estimates hold:
\begin{lem}\label{lS4}
For $\e$ small, the following statements hold:
\begin{align*}
&\| \ov{v} \| \leq \, c\,  \e  \, \, ,  \qquad R_{\alpha_i} \leq \,  c \,  \e  ,\\
 & R_{\l_i} \,\leq \, c \, \e^{2\g +1} | \ln \e | ^{ -\s_n /2} , 
\quad 
 R_{a_i} \leq c \begin{cases} \e^2 | \ln \e | + \e^{ (5/2) - ( 4/n) } & \mbox{ if } n \geq 5, \\
                                               \e ^{3/2} | \ln \e |^{-1} & \mbox{ if } n = 4 . 
    \end{cases}
\end{align*}
where $R_{\a_i}$, $R_{\l_i}$ and $R_{a_i}$ are defined in Propositions \ref{gradalpha}, \ref{gradlam} and \ref{gradpoint} respectively.
\end{lem}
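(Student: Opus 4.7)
The plan is a direct case-by-case verification: I plug the parameterization describing $\mathcal{O}_1(N,b,\e)$ from \eqref{e410}--\eqref{p2-1bis} into the explicit formulas for $R_v$, $R_{\a_i}$, $R_{\l_i}$ and $R_{a_i}$ given in Propositions \ref{v bar}, \ref{gradalpha}, \ref{gradlam} and \ref{gradpoint}, and check that every summand is dominated by the announced right-hand side. The two basic inputs are the bound $\l_i^{-2} = O(\e\,|\ln\e|^{-\sigma_n})$ (immediate from the range of $\l_i$ in $\mathcal{O}_1$) together with the interaction estimate $\e_{ij} = O(\e^{2\g+1}|\ln\e|^{-\sigma_n/2})$ already recorded in \eqref{S1}; recall that $2\g+1 = 2-4/n$ for $n\geq 5$ and $2\g+1 = 1$, $\sigma_4 = 1$ for $n=4$.

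For $\|\ov v\|$, Proposition \ref{v bar} reduces matters to showing that $R_v = \e + \sum T_3(\e_{ij}) + \sum T_2(\l_i) = O(\e)$. Directly, $T_2(\l_i) = O(\e)$ for $n\geq 5$, while for $n=4$ the extra $\ln\l_i$ factor is absorbed into $\l_i^{-2} = O(\e|\ln\e|^{-1})$. Each $T_3(\e_{ij})$ is bounded by $\e_{ij}$ (for $n\leq 5$) or by $\e_{ij}^{(n+2)/(2(n-2))}$ times a log (for $n\geq 6$), which is $O(\e)$ in either case. The same term-by-term bookkeeping yields $R_{\a_i} = O(\e)$, using additionally that the only $\|v\|^2$ appearing here is evaluated at $\ov v$, so $\|v\|^2 \leq c\e^2$.

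For $R_{\l_i}$, the only summands that attain the announced order $\e^{2\g+1}|\ln\e|^{-\sigma_n/2}$ are the interaction terms $\e_{ij}$ themselves. The polynomial-in-$\l^{-1}$ pieces $\l_i^{-4}$, $\l_i^{-(n-2)}$, $(\l_i\l_j)^{-(n-2)/2}$ and the self-interaction $\e_{kj}^{n/(n-2)}\ln\e_{kj}^{-1}$ are all $O(\e^2)$ (with the $n=6$ log absorbed), hence smaller than the target since $2\g+1\leq 2$. The mixed terms $\e_{ij}\l_k^{-3/2}$ are smaller than $\e_{ij}$ by a factor $\e^{3/4}$, and $\e_{ij}\Xi_{ij}$ is controlled by the bound on $\Xi_{ij}$ from Lemma \ref{dj-thetai}, which is polynomial in $\l_k^{-1}$ and $|a_i-a_j|$ and therefore contributes a vanishing factor.

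For $R_{a_i}$, the dominant contribution is $\e_{ij}/\l_i = O(\e^{2\g+3/2}|\ln\e|^{-\sigma_n/2})$. For $n\geq 5$ this equals $\e^{5/2-4/n}$; the remaining terms $\e\ln^{\sigma_n}\l_i/\l_i^2 = O(\e^2)$ and $\l_j|a_i-a_j|\e_{ij}^{(n+1)/(n-2)} = O(\e^2)$ fall below the $\e^2|\ln\e|$ tolerance. For $n=4$ the same interaction term becomes $\e^{3/2}|\ln\e|^{-1}$ after collecting log factors, and every other summand (e.g.\ $\l_k^{-3} = O(\e^{3/2}|\ln\e|^{-3/2})$ and $\e_{ij}\e = O(\e^2|\ln\e|^{-1/2})$) is of strictly smaller order. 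The main obstacle throughout is simply dimension-dependent log-bookkeeping (especially in $n=4$) and knowing the explicit form of $\Xi_{ij}$ from Lemma \ref{dj-thetai}; neither point is genuinely difficult once catalogued.
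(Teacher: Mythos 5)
Your verification is correct and is essentially the paper's own argument: the paper gives no written proof of Lemma \ref{lS4}, asserting only that it follows from Propositions \ref{v bar}, \ref{gradalpha}, \ref{gradlam} and \ref{gradpoint} by substituting the parameterization of $\mathcal{O}_1(N,b,\e)$, which is exactly the term-by-term bookkeeping you carry out, with the dominant contributions ($\e_{ij}/\l_i$ for $R_{a_i}$, the interaction terms for $R_{\l_i}$) correctly identified in both dimensional regimes. One small inaccuracy: for $n=4$ the pieces $\l_i^{-(n-2)}$ and $(\l_i\l_j)^{-(n-2)/2}$ in $R_{\l_i}$ are of order $\e|\ln\e|^{-1}$, not $O(\e^2)$ as you state, but since $\e|\ln\e|^{-1}=o\bigl(\e|\ln\e|^{-1/2}\bigr)$ they are still dominated by the announced bound, so the conclusion is unaffected.
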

 Now, arguing as in the proof of Lemma 4.2 of \cite {2ABE} and using Lemma \ref{ajout1}, we derive that  the constants $A_i$'s, $B_i$'s and $C_{ij}$'s which appear in equations $(E_{v})$, $(E_{\l_i})$ and $(E_{a_i})$ satisfy the following estimates:
\begin{lem}\label{lS5}
Let  $(\a, \l, a )\in \mathcal{O}_2(N, b ,\e)$. Then, for $\e$ small, the following statements hold:
$$
A_i= O(\e \ln^2 \e ),\quad B_i=O(\e)\quad\mbox{and}\quad  | C_{ij} | \leq c \begin{cases} \e^{\g + 3/2} & \mbox{ if } n \geq 5, \\
    \e ^{3/2} | \ln \e | ^{-3/4} & \mbox{ if } n = 4, \end{cases}
    \quad \forall\,  i\leq N, \, \forall\, j \leq n.
$$
\end{lem}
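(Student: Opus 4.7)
The plan is to project equation \eqref{Ev} onto the basis of $E_{a,\l}$ and then to solve the resulting finite-dimensional linear system for the Lagrange multipliers. Recall that the function $\ov{v}$ provided by Proposition \ref{v bar} satisfies \eqref{vbar}, which gives $\nabla I_{V,\e}(\underline{u}+\ov{v})\in E_{a,\l}$ and hence the existence (and uniqueness) of $(A_k,B_k,C_{kj})$ satisfying \eqref{Ev}. Taking the scalar product of \eqref{Ev} in turn with $\pi\d_{a_i,\l_i}$, $\l_i\partial_{\l_i}\pi\d_{a_i,\l_i}$ and $\l_i^{-1}\partial_{a_i^\ell}\pi\d_{a_i,\l_i}$ produces $N(n+2)$ scalar equations whose left-hand sides are exactly the gradients expanded in Propositions \ref{gradalpha}, \ref{gradlam} and \ref{gradpoint}.

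The second step is to read off the sizes of these left-hand sides from those propositions, using the constraints defining $\mathcal{O}_2$ via \eqref{e410} and \eqref{p2-1bis}, together with Lemma \ref{lS4}. For the $\pi\d_i$-test, the leading factor $1-\a_i^{p-1-\e}\l_i^{-\e(n-2)/2}$ is $O(\e\ln^2\e)$ by Taylor expansion in $|\a_i-1|\leq\e\ln^2\e$ and $\e\ln\l_i=O(\e|\ln\e|)$. For the $\l$-test, the two $O(\e)$ contributions of Proposition \ref{gradlam} -- the $c_2\e$-shift and the $c(n)V(a_i)\ln^{\s_n}(\l_i)/\l_i^2$ term -- cancel at leading order by the very choice of $\l_i$ in \eqref{p2-1bis}, and $R_{\l_i}=o(\e)$, leaving an $O(\e)$ bound. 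For the $a$-test, $\nabla V(a_i)=O(\eta(\e))$ since $b$ is a critical point of $V$, $\ln^{\s_n}(\l_i)/\l_i^3$ is of order $\e^{3/2}|\ln\e|^{-\s_n/2}$, and the interaction term $\l_i^{-1}\partial_{a_i}\e_{ij}$ is of order $\e^{\g+3/2}|\ln\e|^{-3\s_n/4}$ by \eqref{S2}, which combined with $R_{a_i}$ matches the announced orders for $n\geq 5$ and $n=4$.

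The last step is to invert the system. The Gram matrix of the basis of $E_{a,\l}$ is block-diagonal at leading order, with positive diagonal entries given by $\|\pi\d_i\|^2$, $\|\l_i\partial_\l\pi\d_i\|^2$ and $\|\l_i^{-1}\partial_a\pi\d_i\|^2$ (see \eqref{eqq:e1}, \eqref{eqq:e2} and \eqref{eqq:e3}); the off-diagonal and mixed-type entries are $O(\e_{ij})$ or $O(\l_i^{-2})$, and in particular $o(1)$, by \eqref{dij}, \eqref{bn7} and \eqref{wxc1}. Strict diagonal dominance then gives a uniformly bounded inverse, and multiplying the vector of estimated right-hand sides by this inverse yields the claimed bounds on $A_i$, $B_i$ and $C_{ij}$. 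The main technical obstacle is the leading-order cancellation in the $\l$-equation: without the scaling \eqref{p2-1bis}, the naive estimate would yield only $B_i=O(1)$, far too weak for the subsequent reduction argument, and tracking the logarithmic factors simultaneously in dimensions $n=4$ and $n\geq 5$ requires additional care.
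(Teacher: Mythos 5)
Your proposal is correct and follows essentially the same route as the paper, which simply invokes the analogue of Lemma 4.2 of \cite{2ABE} together with Lemma \ref{ajout1}: project \eqref{Ev} onto the (nearly orthogonal) basis of $E_{a,\l}$, bound the resulting left-hand sides via Propositions \ref{gradalpha}--\ref{gradpoint} under the constraints of $\mathcal{O}_2(N,b,\e)$, and invert the almost-diagonal Gram system. Two small caveats: first, your closing remark about the $\l$-equation is off --- no cancellation between $c_2\e$ and $c(n)V(a_i)\ln^{\s_n}(\l_i)/\l_i^2$ is needed for $B_i=O(\e)$, since the constraint $c^{-1}<\ln^{\s_n}(\l_i)/(\l_i^2\e)<c$ built into $\mathcal{O}_2$ already makes each term separately $O(\e)$; second, a pure operator-norm bound on the inverse Gram matrix would only give $C_{ij}=O(\e\ln^2\e)$, so to reach $\e^{\g+3/2}$ one must use componentwise that the off-diagonal couplings of the $a$-row (of size $O(\ln^{\s_n}\l_i/\l_i^3)=O(\e^{3/2}|\ln\e|^{-\s_n/2})$ and $O(\e_{ij})$ by Lemma \ref{ajout1}) multiply the crude bounds on $A_k,B_k,C_{k\ell}$ into terms of order $o(\e^{\g+3/2})$ --- a refinement your diagonal-dominance setup supports but does not spell out.
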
 
 Next, our aim is to rewrite equations $(E_{\a_i})$, $(E_{\l_i})$ and  $(E_{a_i})$ in a simple form.
\begin{lem}\label{lS6}
For $\e$ small,  equations $(E_{\a_i})$, $(E_{\l_i})$ and  $(E_{a_i})$ are equivalent to the following system
$$
(S')\quad
\begin{cases}
\beta_i=O(\e|\ln \e|)\quad\forall\, 1\leq i\leq N , & (F' _{\a_i}) \\
\land_i=O\left(|\beta_i|+\e^{2\g} | \ln \e |^{ - \s_n / 2 } \right) \quad\forall\, 1\leq i\leq N , &  (F' _{\l_i}) \\
D^2 V(b)(\z_i,.)-(n-2)\sum_{j\neq i}\frac{\z_j-\z_i}{|\ov{z}_j-\ov{z}_i|^n} +n(n-2)\sum_{j\neq i} \Big\langle \frac{\ov{z}_j-\ov{z}_i}{|\ov{z}_j-\ov{z}_i|^2}, \z_j-\z_i \Big\rangle \frac{\ov{z}_j-\ov{z}_i}{|\ov{z}_j-\ov{z}_i|^n}   \\
\qquad \qquad \qquad -(n-2)\sum_{j\neq i}\frac{\ov{z}_j-\ov{z}_i}{|\ov{z}_j-\ov{z}_i|^n}\left(\frac{n-6}{4}\land_i + \frac{n-2}{4}\land_j\right) =O(R'_S)
 \quad\forall \,1\leq i\leq N, &  (F' _{a_i})
\end{cases}
$$
where 
$$
R'_S = \sum_{j=1}^N \land_j^2 + \sum_{j=1}^N |\z_j|^2 + \eta (\e) + \e^{\frac{1}{2}-\g}|\ln \e | .
$$
\end{lem}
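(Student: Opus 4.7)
The plan is to insert the asymptotic expansions of Propositions \ref{gradalpha}, \ref{gradlam}, \ref{gradpoint} into $(E_{\a_i})$, $(E_{\l_i})$, $(E_{a_i})$, absorb the right-hand sides of $(E_{\l_i})$ and $(E_{a_i})$ into a remainder using Lemma \ref{lS5} together with the standard bounds $\|\l_i^{\pm 1}\partial^2(\pi\d_{a_i,\l_i})\|=O(1)$, and then implement the change of variables \eqref{p2-1bis}, relying on \eqref{S1}--\eqref{S3} and the size estimates of Lemma \ref{lS4}.

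For $(F'_{\a_i})$, I would divide $(E_{\a_i})$ by $\a_i S_n$; Proposition \ref{gradalpha} together with $R_{\a_i}=O(\e)$ yields $\a_i^{p-1-\e}\l_i^{-\e(n-2)/2}=1+O(\e)$. Writing $\a_i^{p-1}=1-\beta_i$ and expanding $\l_i^{-\e(n-2)/2}=1+O(\e|\ln\e|)$ (since $\ln\l_i\sim\tfrac{1}{2}|\ln\e|$) isolates $\beta_i=O(\e|\ln\e|)$. For $(F'_{\l_i})$, this identity turns the bracket $2\a_i^{p-1-\e}c_0^{-\e}\l_i^{-\e(n-2)/2}-1$ in Proposition \ref{gradlam} into $1+O(\e|\ln\e|)$, so the $V(a_i)$-term reduces to $-c(n)V(b)\ln^{\s_n}\l_i/\l_i^2\,(1+O(|\beta_i|+\eta(\e)))$; the parametrization gives $c(n)V(b)\ln^{\s_n}\l_i/\l_i^2=c_2\e(1+\land_i)(1+o(1))$, which cancels the leading $c_2\e$ from the intrinsic $c_2\a_i^{p-\e}\e$ term. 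The interaction $\ov{c}_2\sum_j\a_j\l_i\partial_{\l_i}\e_{ij}$ contributes $O(\e_{ij})$, and $R_{\l_i}=O(\e^{2\g+1}|\ln\e|^{-\s_n/2})$ by Lemma \ref{lS4}. Dividing by $c_2\e$ then yields $\land_i=O(|\beta_i|+\e_{ij}/\e)=O(|\beta_i|+\e^{2\g}|\ln\e|^{-\s_n/2})$, which is $(F'_{\l_i})$.

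For $(F'_{a_i})$, the central step is to Taylor-expand $\n V(a_i)=\eta(\e)\sigma D^2V(b)(\z_i+\ov{z}_i,\cdot)+O(\eta(\e)^2)$ in Proposition \ref{gradpoint}; by $(F'_{\a_i})$, the bracket multiplying the $\n V$-term equals $1+O(\e|\ln\e|)$ and the bracket multiplying $\ov{c}_2\l_i^{-1}\partial_{a_i}\e_{ij}$ equals $-1+O(\e|\ln\e|)$. Next I would use \eqref{S22}--\eqref{S3} to expand $\l_i^{-1}\partial_{a_i}\e_{ij}$ and Taylor-expand
\[ \frac{y+\xi}{|y+\xi|^n}=\frac{y}{|y|^n}+\frac{\xi}{|y|^n}-n\frac{\langle y,\xi\rangle\,y}{|y|^{n+2}}+O(|\xi|^2),\qquad y=\ov{z}_j-\ov{z}_i,\ \xi=\z_j-\z_i. \]
Dividing by the overall prefactor $c_2(n)\eta(\e)\sigma\ln^{\s_n}\l_i/\l_i^3$, the $\ov{z}$-only contributions assemble into $\frac{1}{2}\n_{z_i}\mathcal{F}_{b,N}(\ov{z})=0$ by hypothesis and drop out; the linear-in-$\z$ terms yield $D^2V(b)(\z_i,\cdot)-(n-2)\sum_{j\ne i}(\z_j-\z_i)/|\ov{z}_j-\ov{z}_i|^n+n(n-2)\sum_{j\ne i}\langle(\ov{z}_j-\ov{z}_i)/|\ov{z}_j-\ov{z}_i|^2,\z_j-\z_i\rangle(\ov{z}_j-\ov{z}_i)/|\ov{z}_j-\ov{z}_i|^n$; and the $\land$-corrections arise from $R(\land_i,\land_j)$ of \eqref{S3}, from the extra $\l_i^{-1}$ factor in $\l_i^{-1}\partial_{a_i}\e_{ij}$, and from the $\l_i^{-3}$ factor in the prefactor, combining into $\tfrac{n-6}{4}\land_i+\tfrac{n-2}{4}\land_j$. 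The residual $R'_S$ then absorbs the quadratic $|\z|^2,\land^2$ Taylor errors, the $O(\eta(\e))$ remainder from $\n V$, and $R_{a_i}/(\text{prefactor})=O(\eta(\e)+\e^{1/2-\g}|\ln\e|)$.

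The hardest part will be the precise algebraic bookkeeping inside $(F'_{a_i})$: one must verify that the constant $\sigma$ defined in \eqref{sigma} produces exactly the coefficients $-(n-2)$ and $n(n-2)$ in the $\z$-dependent part, and that the $\l_i^{-n/2}\l_j^{-(n-2)/2}$ factor appearing in $\l_i^{-1}\partial_{a_i}\e_{ij}$ and the $\l_i^{-3}$ factor in the prefactor combine to give exactly $\tfrac{n-6}{4}\land_i=(\tfrac{n}{4}-\tfrac{3}{2})\land_i$ and $\tfrac{n-2}{4}\land_j$. The choice $\sigma=(\ov{c}_2/c_2(n))^{1/n}(c_2/(c(n)V(b)))^{\g}$ is engineered precisely so that the $\n V$ term and the bubble-interaction term become balanced at the same $\eta(\e)$-homogeneous rate, which is what makes the final normalized equation read cleanly in terms of the critical-point data of $\mathcal{F}_{b,N}$.
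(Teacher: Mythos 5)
Your proposal is correct and follows essentially the same route as the paper: insert the expansions of Propositions \ref{gradalpha}, \ref{gradlam}, \ref{gradpoint}, control the Lagrange-multiplier right-hand sides via Lemmas \ref{lS5} and \ref{ajout1}, apply the change of variables \eqref{p2-1bis} with \eqref{S1}--\eqref{S3}, Taylor-expand $\n V(a_i)$ and the interaction kernel, and use the criticality of $(\ov{z}_1,\dots,\ov{z}_N)$ to remove the leading term. Your bookkeeping of the $\land$-coefficients (the exponent $-3/2+1/2+(n-2)/4=(n-6)/4$ on $(1+\land_i)$) and of the normalized remainder $R'_S$ agrees with the paper's computation in \eqref{S8}--\eqref{S12}.
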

\begin{pf}
First, using Proposition \ref{gradalpha}, Lemma \ref{lS4} and the fact that $(\a,\l,a)\in \mathcal{O}_2 (N,b,\e) $, we see that  $(E_{\a_i})$ is equivalent to the equation  $ (F'_{\a_i})$. 
Second, using Lemmas \ref{lS5} and \ref{ajout1}, we write
$$
 \Big \langle \n I_\e (u), \l_i \frac{\partial \pi \d_{a_i, \l_i}}{\partial\l_i} \Big\rangle = O\left(|B_i| \| v\| \left\| \l_i ^ 2  \frac{\partial^2 \pi \d_{a_i, \l_i}}{\partial\l_i^2} \right\| + \sum_{j=1}^N |C_{ij}|  \| v\|  \left \|  \frac{\partial^2 \pi \d_{a_i, \l_i}}{\partial\l_i \partial a_i^j} \right\|  \right) = O\left(\e \| v\|\right).
$$
Using Proposition \ref{gradlam},  Lemma \ref{lS4} and \eqref{S1},  we obtain
$$
c_2\,\e - c (n) \frac{ \ln ^{\s_n} \l_i }{\l_i^2} V(a_i) = O\left(\e |\beta_i| + \e ^{2\g +1} | \ln \e | ^{ - \s_n /2} \right).
$$
But we have
$$
V(a_i) = V (b) + O\left(|a_i-b|^2\right) = V (b)+ O\left(\eta (\e) ^{2}\right).
$$
This implies that $(E_{\l_i})$ is equivalent to the equation  $(F'_{\l_i})$. \\
To deal with the third equation $(E_{a_i})$, using Lemma \ref{ajout1}, we write 
\begin{align}\label{S7}
\Big\langle \n I_\e (u), \frac{1}{\l_i} \frac{\partial \pi \d_{a_i, \l_i}}{\partial a_i}  \Big\rangle & = O\left(|B_i| \| v\| \left\|  \frac{\partial^2 \pi \d_{a_i, \l_i}}{\partial\l_i \partial a_i} \right\| + \sum_{j=1}^N |C_{ij}|  \| v\|  \left\| \frac{1}{\l_i^2} \frac{\partial^2 \pi \d_{a_i, \l_i}}{\partial a_i \partial a_i^j}\right\| \right) = O\big(\e\| v\| \big).
\end{align}
 But, combining \eqref{S22}, Proposition \ref{gradpoint} and  Lemma \ref{lS4}, \eqref{S7} becomes
 \begin{align}\label{S8}
 c_2(n) \frac{ \ln ^{\s_n} \l_i }{\l_i^3}\n V(a_i)& - (n - 2) \ov{c}_{10} \frac{\e \eta (\e) }{\l_i} \sum_{j\neq i} \frac{(\z_j+\bar{z}_j - \z_i -\bar{z}_i)}{|\z_j+\bar{z}_j - \z_i -\bar{z}_i|^n}\left(1+\frac{n-2}{4}\land_i + \frac{n-2}{4}\land_j \right)\nonumber\\
 &  = O \Big(\e^2|\ln \e |^{\s_n /2 } + R_{a_i} + \eta (\e)  \e^{3/2} | \ln \e | ^{ -\s_n / 2 } \sum \Big( | \beta_j | + \land_j^2 \Big) \Big),
\end{align}
where
$$
\ov{c}_{10}=\frac{\ov{c}_2}{\sigma^{n-1}}\left(\frac{c_2}{c (n)  V(b)}\right)^{ ( n-2 ) / 2 }.
$$
Observe that
\be \label{S9}
\n V (a_i)  = D^2 V ( b )( a_i - b , . ) + O\left( | a_i - b |^2 \right) = \eta(\e)  \sigma D^2 V ( b ) (\z_i +\bar{z}_i , . ) + O\left( \eta(\e)^{2 }\right).
\ee
Combining \eqref{S8}, \eqref{S9} and \eqref{sigma}, we obtain
\begin{align}\label{S10}
D^2 V ( b ) (\z_i+\bar{z}_i,.) & - (n-2)\sum_{j\neq i} \frac{(\z_j+\bar{z}_j - \z_i -\bar{z}_i)}{|\z_j+\bar{z}_j - \z_i -\bar{z}_i|^n}\left(1+\frac{n-6}{4}\land_i + \frac{n-2}{4}\land_j \right)\nonumber\\
&= O\left( \eta (\e) + \e^{\frac{1}{2}-\g} | \ln \e| + \sum\Big( | \beta_j | +  \land_j^2 \Big) \right)  .
\end{align}
But we have
\be\label{S11}
D^2 V ( b ) (\z_i+\bar{z}_i,.) = D^2 V ( b ) (\z_i,.)+ D^2 V ( b ) (\bar{z}_i,.)
\ee
and
\begin{align}\label{S12}
&\frac{(\z_j+\bar{z}_j - \z_i -\bar{z}_i)}{|\z_j+\bar{z}_j - \z_i -\bar{z}_i|^n}=\left(\frac{(\bar{z}_j -\bar{z}_i)}{|\bar{z}_j -\bar{z}_i|^n} + \frac{(\z_j - \z_i)}{|\bar{z}_j -\bar{z}_i|^n}\right)\left[1-n \left\langle \frac{(\bar{z}_j -\bar{z}_i)}{|\bar{z}_j -\bar{z}_i|^2}, \z_j-\z_i\right\rangle +O\left( | \z_i |^2 + | \z_j |^2\right)\right]\nonumber\\
&\qquad\qquad=\frac{(\bar{z}_j -\bar{z}_i)}{|\bar{z}_j -\bar{z}_i|^n}-n \left\langle \frac{(\bar{z}_j -\bar{z}_i)}{|\bar{z}_j -\bar{z}_i|^2}, \z_j-\z_i\right\rangle \frac{(\bar{z}_j -\bar{z}_i)}{|\bar{z}_j -\bar{z}_i|^n}+\frac{(\z_j - \z_i)}{|\bar{z}_j -\bar{z}_i|^n} + O\left( | \z_i |^2 + | \z_j |^2\right).
\end{align}
Combining \eqref{S10}, \eqref{S11}, \eqref{S12} and the fact that $(\bar{z}_1,..., \bar{z}_N) $ is a critical point of $F_{N,b}$, we  see that equation  $(E_{a_i})$ is equivalent to the equation 
$ (F'_{a_i}) $  which completes the proof of Lemma \ref{lS6}.
\end{pf} 

Now, we are ready to prove our results related to the construction of clustered bubbling  solutions.

\begin{pfn} {\bf of Theorem \ref{th:t1}}
Note that the system $(F'_{a_1})$, ..., $(F'_{a_N})$ is equivalent to 
$$ 
\frac{1}{2}D^2 F_{N,b}(\ov{z}_1,..., \ov{z}_N)(\z_1,..., \z_N) - (n-2) ( \G_1, \cdots , \G_N) = O\left(\sum_{j=1}^N ( \land_j^2 +  |\z_j|^2) + \eta (\e) + \e^{\frac{1}{2}-\g}|\ln \e | \right) , $$
where 
$$ \G_i :=  \sum_{j\neq i}\frac{\ov{z}_j-\ov{z}_i}{|\ov{z}_j-\ov{z}_i|^n}\left(\frac{n-6}{4}\land_i + \frac{n-2}{4}\land_j\right) \quad \forall \, \, 1 \leq i \leq N . 
$$
As in the proof of Theorem $1$ of \cite{EM}, we define a linear map by taking the left hand side of the system defined by $(F'_{\a_i})$, $(F'_{\l_i})$ and $(F'_{a_i})$. Since $(\bar{z}_1,..., \bar{z}_N)$ is a non-degenerate critical point of $F_{N,b}$, we deduce that   such a linear map is invertible and arguing as in the proof of Theorem $1$ of \cite{EM}, we derive that the system $(S')$ has a solution $(\beta_\e, \land_\e, \z_\e)$ for $\e$ small. This implies that  $(\mathcal{P}_{V,\e})$ admits a solution $ u_{\e,b} =\sum_{i=1}^N \a_{i,\e} \pi \d_{{a_{i,\e}}, \l_{i,\e}} +v_\e $
 and, by construction, properties \eqref{t30} - \eqref{t33} are satisfied. The proof of Theorem \ref{th:t1} is thereby completed.
\end{pfn}

\begin{pfn} {\bf of Theorem \ref{th:t2}}
Let $b_1$, $b_2$ be two non-degenerate critical points of $ V $.
First, we observe that in the equation $(E_{\l_i})$, the interaction between the bubbles of two different blocks is of the order of  $\e^{(n-2)/2}$ which is negligible in front of the main term whose order is $\e$. This implies that terms of this type will fit into the rest. Second, we note that in the equation $(E_{a_i})$, the interaction between the bubbles of two distinct blocks is of order ($\e^{(n-1)/2}$ if $n\geq 5$; $(\e /|\ln \e |)^{3/2}$ if $n=4$), which is negligible compared to the main term of order ($\e^{2-\frac{2}{n}}$ if $n\geq 5$; $|\ln\e |^{-3/4}\e^{3/2}$ if $n=4$). Consequently, terms of this form can be incorporated into the remaining terms.  Hence, arguing as in the proof of Theorem \ref{th:t1} and taking a new system $((S_1), \cdots , (S_m))$ with each $ (S_i) $ represents the  system studied in the proof of Theorem \ref{th:t1}, the proof  of the theorem follows.
\end{pfn}

\section{Appendix}
In this appendix, we compile several estimates necessary for the analysis presented in the paper. We begin with the following result, which is taken from \cite{Dammak} (see Lemma 2.2).
\begin{lem}\label{lD}
Let $a_i, a_j, \l_i$ and $\l_j$ be such that $\e_{ij}$ is small. For $\alpha$ and $\beta$ satisfying  $ \alpha + \beta = {2n}/(n-2) $ and   $ \alpha \neq \beta $,  it holds 
$$ \int_{\O}\d_{i}^{\alpha}\d_{j}^{\beta}\leq c\e_{ij}^{\min(\alpha,\beta)}.$$
\end{lem}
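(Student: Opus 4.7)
\begin{pf}
The strategy is to reduce the integral to standard computations on $\R^n$ by scaling near one of the bubble centers, isolate the main contribution through a region decomposition, and match the result to a power of $\e_{ij}$ using the explicit form $\e_{ij}^{-2/(n-2)}=\l_i/\l_j+\l_j/\l_i+\l_i\l_j|a_i-a_j|^2$.

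The first step is to use the symmetry $(i,\alpha)\leftrightarrow(j,\beta)$ to assume $\alpha<\beta$, so that the hypothesis $\alpha+\beta=2n/(n-2)$ together with $\alpha\neq\beta$ forces the strict inequalities $\alpha<n/(n-2)<\beta$; in particular $\min(\alpha,\beta)=\alpha$. Extending the integrand (which is nonnegative) from $\O$ to $\R^n$, the plan is to split $\R^n$ into a ball $U_1$ around $a_i$ and a ball $U_2$ around $a_j$, each of radius $|a_i-a_j|/2$, together with the exterior $U_3$. On $U_2$, which carries the larger exponent $\beta$, the factor $\d_i$ is essentially constant, while $\int_{\R^n}\d_j^\beta$ converges because $\beta(n-2)>n$; scaling $y=\l_j(x-a_j)$ yields a bound of order $c\,\l_j^{-\alpha(n-2)/2}$ multiplying the value of $\d_i^\alpha$ at $a_j$ (using the key identity $\beta(n-2)/2-n=-\alpha(n-2)/2$ coming from the constraint on $\alpha+\beta$). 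On $U_1$ the roles swap, but now $\alpha(n-2)<n$, so $\int_{|y|\leq T}(1+|y|^2)^{-\alpha(n-2)/2}\,dy$ grows polynomially, producing a $T=\l_i|a_i-a_j|/2$-dependent factor that combines with the value of $\d_j^\beta$ at $a_i$. The exterior $U_3$ is controlled by the standard polynomial decay of both bubbles away from their concentration points.

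The main obstacle will be verifying that the combined estimate matches $c\,\e_{ij}^{\alpha}$ \emph{uniformly} across the three regimes making $\e_{ij}$ small: comparable scales with well-separated centers (where the term $\l_i\l_j|a_i-a_j|^2$ dominates $\e_{ij}^{-2/(n-2)}$), very disparate scales with bounded separation (where $\max(\l_i,\l_j)/\min(\l_i,\l_j)$ dominates), and intermediate situations. In each regime the identity $\alpha+\beta=2n/(n-2)$ is exactly what allows the prefactors arising from the scaling to reassemble into a symmetric function of the three terms $\l_i/\l_j$, $\l_j/\l_i$ and $\l_i\l_j|a_i-a_j|^2$, thereby reproducing $\e_{ij}^{\alpha}$. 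The exclusion $\alpha\neq\beta$ is essential: if $\alpha=\beta=n/(n-2)$, the $U_1$-integral grows only logarithmically in $T$, producing the familiar $\ln\e_{ij}^{-1}$ correction which is, by hypothesis, absent here.
\end{pf}
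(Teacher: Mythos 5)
First, a point of comparison: the paper does not prove this lemma at all --- it is imported verbatim from Lemma~2.2 of the cited work of Dammak (and is a classical estimate going back to Bahri's book), so there is no in-paper argument to measure your proposal against; it has to stand on its own.

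As a stand-alone argument it has a genuine gap. Your reduction to $\alpha<n/(n-2)<\beta$ is correct, and your treatment of $U_2$ works in the regime where $\l_i\l_j|a_i-a_j|^2$ is the dominant term in $\e_{ij}^{-2/(n-2)}$, i.e.\ when the centers are well separated relative to both concentration scales. But $\e_{ij}$ can be small for a second reason: $\l_i/\l_j+\l_j/\l_i$ large while $|a_i-a_j|$ is as small as one likes (even zero). In that regime your decomposition collapses. The balls $U_1,U_2$ have radius $|a_i-a_j|/2\ll 1/\l_j$, so essentially all of the mass of $\d_j^{\beta}$, which lives at scale $1/\l_j$ around $a_j$, sits in $U_3$; and the claim that $U_3$ ``is controlled by the standard polynomial decay of both bubbles away from their concentration points'' is false there, since $U_3$ then contains points at distance $O(|a_i-a_j|)$ from \emph{both} centers, where $\d_j\sim\l_j^{(n-2)/2}$ is at its maximum. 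Relatedly, the $U_2$ bound as you compute it (replacing $\int_{|y|\le\l_j|a_i-a_j|/2}$ by $\int_{\R^n}$) yields $c\,(\l_j/\l_i+\l_i\l_j|a_i-a_j|^2)^{-\alpha(n-2)/2}$, which exceeds $c\,\e_{ij}^{\alpha}$ precisely when $\l_i\gg\l_j$ and $|a_i-a_j|\lesssim 1/\l_j$. You explicitly name the uniformity across regimes as ``the main obstacle,'' but flagging it is not overcoming it: the assertion that ``in each regime the prefactors reassemble'' into $\e_{ij}^{\alpha}$ is exactly the content of the lemma and is never verified outside the separated case. A correct completion needs a decomposition adapted to the concentration scales and not only to $|a_i-a_j|$ --- for instance, rescaling by $\l_j$ around $a_j$ over all of $\R^n$ and performing the resulting case analysis in $\rho=\l_i/\l_j$ and $\l_j|a_i-a_j|$, or disposing of the set $\{\d_i>\d_j\}$ via $\d_i^{\alpha}\d_j^{\beta}\le(\d_i\d_j)^{n/(n-2)}$ together with estimate (E2) and the strict inequality $\alpha<n/(n-2)$, before treating the complementary set. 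Your closing remark about the logarithmic correction when $\alpha=\beta$ is correct and shows you see where the hypothesis $\alpha\neq\beta$ enters, but the proof as written does not establish the claimed bound in all the configurations covered by the hypothesis ``$\e_{ij}$ small.''
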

We now prove some estimations involving bubbles and the approximate solutions. 
\begin{lem}\label{vphi}
 For $ i\in\{1, \cdots ,N\} $ and $j\in\{1, \cdots ,n\}$, let
	$$\varphi_i\in\left\{\pi\d_{a_i, \l_i},\l_{i}\frac{\partial(\pi\d_{a_i, \l_i})}{\partial \l_{i}},\frac{1}{\l_{i}}\frac{\partial(\pi\d_{a_i, \l_i})}{\partial(a_{i})_j}\right\} . $$
Let $\underline{u}:=\sum_{i=1}^N\alpha_{i}\pi\d_{a_i, \l_i}$ and $v\in E_{a,\l}^{\perp}$, it holds 
	$$\left|\int_{\O}(\underline{u})^{p-\e-1}v\varphi_{i}\right|\leq c\|v\|\Big(\e+T_2(\l_i)+\sum_{j\neq i}T_{3}(\e_{ij})\Big)$$
	where $T_2(\l)$ and $ T_{3}(\e_{ij}) $  are defined in Lemma \ref{lfv}. 
\end{lem}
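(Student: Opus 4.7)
The plan is to mirror the proof of Lemma \ref{lfv}, incorporating the extra factor $\varphi_i$ via the pointwise bound $|\varphi_i|\leq c\,\pi\delta_{a_i,\lambda_i}\leq c\,\delta_{a_i,\lambda_i}$, which follows from \eqref{est0}. First, I would apply the elementary inequality \eqref{lst2} to $(\underline{u})^{p-1-\varepsilon}$ with exponent $\gamma=p-1-\varepsilon\leq 2$ (valid since $n\geq 4$ yields $p-1\leq 2$). This decomposes the integral as
\begin{equation*}
\int_\Omega (\underline{u})^{p-1-\varepsilon}v\,\varphi_i = \sum_{k=1}^N \alpha_k^{p-1-\varepsilon}\int_\Omega (\pi\delta_{a_k,\lambda_k})^{p-1-\varepsilon}v\,\varphi_i + O\Bigl(\sum_{k\neq j}\int_\Omega [(\pi\delta_{a_k,\lambda_k})(\pi\delta_{a_j,\lambda_j})]^{(p-1-\varepsilon)/2}|v||\varphi_i|\Bigr).
\end{equation*}

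For the crossed remainder, the pointwise bound on $\varphi_i$ together with H\"older's inequality, Estimate (E2) of \cite{Bahri-book}, the Sobolev embedding $H^1_0(\Omega)\hookrightarrow L^{2n/(n-2)}(\Omega)$, and Lemma \ref{lD}, produces contributions of the form $c\|v\|T_3(\varepsilon_{kj})$, exactly as in \eqref{eq:f2} and \eqref{eq:f4}. In particular, the auxiliary bubble $\delta_{a_i,\lambda_i}$ supplied by $\varphi_i$ is absorbed via Young-type splittings controlled by Lemma \ref{lD}. Similarly, each diagonal term with $k\neq i$ satisfies $(\pi\delta_{a_k,\lambda_k})^{p-1-\varepsilon}|\varphi_i|\leq c\,\delta_{a_k,\lambda_k}^{p-1}\delta_{a_i,\lambda_i}$, and H\"older together with Lemma \ref{lD} yields a $c\|v\|T_3(\varepsilon_{ki})$ bound.

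The crucial case is the diagonal term with $k=i$, where the orthogonality $v\in E_{a,\lambda}^\perp$ must intervene. Writing $\pi\delta_{a_i,\lambda_i}=\delta_{a_i,\lambda_i}-\theta_{a_i,\lambda_i}$ and, when $\varphi_i$ is a derivative, passing the derivative through the relation, I split $\varphi_i=\psi_i-\widetilde\psi_i$, where $\psi_i$ is the corresponding value/derivative of $\delta_{a_i,\lambda_i}$ and $|\widetilde\psi_i|\leq c\,\theta_{a_i,\lambda_i}$ by \eqref{est1}. The $\widetilde\psi_i$-contribution is of order $\int\delta_{a_i,\lambda_i}^{p-1}\theta_{a_i,\lambda_i}|v|$, bounded by $c\|v\|T_2(\lambda_i)$ thanks to \eqref{est2}--\eqref{est3} and H\"older (precisely as in the derivation of \eqref{R1dv}). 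Next, I extract the $\varepsilon$-factor from $(\pi\delta_{a_i,\lambda_i})^{p-1-\varepsilon}$ via \eqref{eq:8}, producing an $O(\varepsilon\|v\|)$ correction plus the leading integral $\int \delta_{a_i,\lambda_i}^{p-1}v\psi_i$. This principal integral vanishes: differentiating the relation $(-\Delta+V)\pi\delta_{a_i,\lambda_i}=\delta_{a_i,\lambda_i}^p$ in $\lambda_i$ and $a_i$, then testing against $v$ and invoking $\langle v,\pi\delta_{a_i,\lambda_i}\rangle=\langle v,\partial_{\lambda_i}\pi\delta_{a_i,\lambda_i}\rangle=\langle v,\partial_{a_i}\pi\delta_{a_i,\lambda_i}\rangle=0$, yields $\int \delta_{a_i,\lambda_i}^p v=0$ and $\int \delta_{a_i,\lambda_i}^{p-1}\partial_{\lambda_i}\delta_{a_i,\lambda_i}\,v=\int \delta_{a_i,\lambda_i}^{p-1}\partial_{a_i}\delta_{a_i,\lambda_i}\,v=0$.

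The main obstacle is the careful bookkeeping when expanding $(\pi\delta_{a_i,\lambda_i})^{p-1-\varepsilon}$ and $\varphi_i$ simultaneously around $\delta_{a_i,\lambda_i}^{p-1}$ and $\psi_i$, so that all cross errors are absorbable into $\varepsilon+T_2(\lambda_i)+\sum_{j\neq i}T_3(\varepsilon_{ij})$ uniformly in the three choices of $\varphi_i$. One must treat separately the ranges $n=4,5$ (where $p-1>1$ and the Taylor expansion \eqref{lst3} of $(\delta-\theta)^{p-1-\varepsilon}$ is applicable) and $n\geq 6$ (where $p-1\leq 1$ and one argues via the pointwise bound $\pi\delta\leq\delta$ combined with \eqref{lst5}); summing the contributions from all $k$ then produces the claimed estimate.
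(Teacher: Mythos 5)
Your treatment of the diagonal term with $k=i$ (orthogonality of $v$ against $E_{a,\l}$ after peeling off the $\theta$- and $\e$-corrections) is sound and is essentially what the paper does by invoking Lemma 6.6 of \cite{2ABE}. The gap is in your initial decomposition. The paper does \emph{not} expand $(\underline{u})^{p-1-\e}$ by itself via \eqref{lst2}; it expands the product $(\underline{u})^{p-1-\e}\varphi_i$ via \eqref{lst42}, taking $b_1=\a_i\pi\d_{a_i,\l_i}$ (the one bubble that dominates $z=\varphi_i$) and $b_2=\sum_{j\neq i}\a_j\pi\d_{a_j,\l_j}$. With that choice the only diagonal term is the one with matching index $i$, and every other bubble enters only through $(\d_{a_i,\l_i}\d_{a_j,\l_j})^{p/2}$ (for $n\geq 6$) or $\d_{a_i,\l_i}^{p-1}\d_{a_j,\l_j}+\d_{a_j,\l_j}^{p-1}\d_{a_i,\l_i}$ (for $n\leq 5$), which are exactly the quantities controlled by $T_3$.

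Your decomposition instead produces the off-index diagonal terms $\int_\O(\pi\d_{a_k,\l_k})^{p-1-\e}|v|\,|\varphi_i|\lesssim\int_\O\d_{a_k,\l_k}^{p-1}\d_{a_i,\l_i}|v|$ for $k\neq i$, and your claim that ``H\"older together with Lemma \ref{lD} yields a $c\|v\|T_3(\e_{ki})$ bound'' fails for $n\geq 7$. Indeed, H\"older against $\|v\|_{L^{2n/(n-2)}}$ requires $\big(\int(\d_{a_k,\l_k}^{p-1}\d_{a_i,\l_i})^{2n/(n+2)}\big)^{(n+2)/(2n)}$, and Lemma \ref{lD} (the exponents are $\frac{8n}{(n-2)(n+2)}$ and $\frac{2n}{n+2}$, the first being the smaller one for $n\geq 7$) gives $c\,\e_{ki}^{4/(n-2)}$. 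Since $\frac{4}{n-2}<\frac{n+2}{2(n-2)}$ for $n>6$, one has $\e_{ki}^{4/(n-2)}\gg T_3(\e_{ki})$, and this is not an artifact of H\"older: on the half-space where $\d_{a_i,\l_i}\geq\d_{a_k,\l_k}$ one has $\d_{a_k,\l_k}^{p-1}\d_{a_i,\l_i}\geq(\d_{a_k,\l_k}\d_{a_i,\l_i})^{p/2}$, so this term genuinely dominates the product term. In the clustered regime $|a_i-a_k|\sim\eta(\e)$, $\l\sim\e^{-1/2}$ relevant to Theorem \ref{th:t1}, it is not absorbed by $\e+T_2(\l_i)$ either once $n$ is large. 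A related, more minor point: \eqref{lst2} is stated for $\g>1$, whereas $p-1-\e<1$ for $n\geq 7$, so even the form of your first display needs justification there. Your argument as written is correct for $n=4,5$ (and, with \eqref{eq:f1}, for $n=6$); to cover all $n\geq 4$ you should replace the first step by the paper's expansion \eqref{lst42} anchored at the $i$-th bubble.
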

\begin{proof}
 Observe that, from  \eqref{est0}, we deduce that $ \left| \varphi_{i}\right|\leq c\pi \delta_{a_{i},\lambda_{i}}$. Thus, using the fact that: for $ b_1, b_2, z \in \R $ such that $ | z | \leq c | b_1 | $,  
\be \label{lst42}   \big| | b_1 + b_2 |^\gamma z - | b_1 | ^\gamma z \big| \leq c \begin{cases}
 | b_1 b_2 | ^{ (\gamma+1 ) /2 } & \mbox{ if } \gamma \leq 1 , \\
 | b_1 |^\gamma | b_2 | + | b_2 |^\gamma | b_1 | & \mbox{ if } \gamma > 1 , 
 \end{cases} \ee 
 it follows that
	\begin{align*}
 \int_{\Omega}(\underline{u})^{p-\varepsilon-1}v\varphi_{i}= & \alpha_{i}^{p-\varepsilon-1}\int_{\Omega}\left( \pi \delta_{a_{i},\lambda_{i}}\right)^{p-\varepsilon-1}v\varphi_{i}\notag\\
 & + \sum _{j\neq i}\begin{cases}
 O\left( \int_{\Omega} \left( \delta_{a_{i},\lambda_{i}}\delta_{a_{j},\lambda_{j}}\right)^{\frac{p}{2}} \left| v\right|\right)& \text{ if } \; \; n\geq 6 ,\\
 O\left( \int_{\Omega} \delta_{a_{i},\lambda_{i}}^{p-1}\delta_{a_{j},\lambda_{j}}\left| v\right|+\int_{\O}\delta_{a_{i},\lambda_{i}}\delta_{a_{j},\lambda_{j}}^{p-1}\left| v\right| \right) & \text{ if } \; \; n\leq 5.
		\end{cases}
	\end{align*}
	Note that, for $n \leq 5$ (which implies that $p-1>1$), for each $ k $ and $ \ell $, it holds 
$$  \int_{\Omega} \delta_{a_{ k },\lambda_{ k }}^{p-1}\delta_{a_{ \ell },\lambda_{ \ell }} \left| v\right|\leq c\left\| v\right\| \left( \int_{\Omega} \left( \delta_{a_{k},\lambda_{k}}^{p-1}\delta_{a_{ \ell },\lambda_{ \ell }}\right)^{\frac{2n}{n+2}}\right)^{\frac{n+2}{2n}}\leq c\left\| v\right\|\varepsilon_{ k \ell }
$$
by using Lemma \ref{lD}.  However, using \eqref{eq:f1}, we derive that
$$ \int_{\Omega}\left( \delta_{a_{i},\lambda_{i}}\delta_{a_{j},\lambda_{j}}\right)^{\frac{p}{2}} \left| v\right|\leq c \left\| v\right\| \left(\int_{\Omega}\left( \delta_{a_{i},\lambda_{i}}\delta_{a_{j},\lambda_{j}}\right)^{\frac{n}{n-2}} \right)^{\frac{n+2}{2n}} \leq c\left\| v\right\|\varepsilon_{ij}^{\frac{n+2}{2(n-2)}}\left(\ln \varepsilon_{ij}^{-1}\right)^{\frac{n+2}{2n}}.
$$
For the other integral, following the end of the proof of Lemma 6.6 of \cite{2ABE}, we deduce that 
$$
 \left| \int_{\Omega}\left( \pi\delta_{a_{i},\lambda _{i}}\right)^{p-\varepsilon -1}v \varphi _ i \right| \leq c \left\| v\right\| \left( \varepsilon + T_{2}\left(\lambda_{i}\right)\right).
$$
Hence, the proof of the Lemma is completed.
\end{proof}

\begin{lem} \label{dj-thetai} Let $a_1, a_2 \in \O$ be such that $ d(a_i, \partial \O) \geq c >$ and $ \l_1, \l_2$ be large reals such that $ \e_{12} $ is small. Let $ \psi_1 \in \{ \th_{a_1, \l_1} , \l_1 \partial \th_{a_1,\l_1} / \partial \l_1 \}$ . It holds that
$$   \int_\O \d_{a_2, \l_2} ^{\frac{n+2}{n-2} }  | \psi _1 | \leq  \frac{ c }{ ( \l_1 \l_2)^{(n-2) / 2 } } + c \, \e_{12} \underbrace{ \begin{cases}
1 / \l_1 & \mbox{ if } n = 3, \\
\ln \l_1 / \l_1^2 + | a_1 - a_2 | ^2 | \ln | a_1-a_2 | | + 1 / \l_2^{3/2}  & \mbox{ if } n = 4, \\$$
 1 / \l_1^2 + | a_1 - a_2 | ^2 + 1 / \l_2^{3/2}  & \mbox{ if } n \geq 5 .
 \end{cases}  } _{ \Xi_{12} }$$ 
\end{lem}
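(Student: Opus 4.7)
The plan is to partition $\Omega$ into the boundary strip $\Omega\setminus\Omega_0$ and the interior region $\Omega_0$, and to further decompose $\Omega_0$ according to the distances to $a_1$ and $a_2$, applying on each piece the pointwise estimates \eqref{est1}--\eqref{est3}. Since \eqref{est1} gives $|\lambda_1\partial_{\lambda_1}\theta_{a_1,\lambda_1}| \leq \tfrac{n-2}{2}\theta_{a_1,\lambda_1}$, it suffices to treat the case $\psi_1 = \theta_{a_1,\lambda_1}$; the bound for the $\lambda$-derivative follows at once.

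On $\Omega\setminus\Omega_0$, the hypothesis $d(a_i,\partial\Omega)\geq c>0$ combined with a sufficiently small $d_0$ forces $|x-a_i|\geq c/2$, hence $\delta_{a_i,\lambda_i}(x) = O(\lambda_i^{-(n-2)/2})$. Combining this with the global bound $\theta_{a_1,\lambda_1}\leq \delta_{a_1,\lambda_1}$ from \eqref{est1} immediately produces the $c/(\lambda_1\lambda_2)^{(n-2)/2}$ term on the right-hand side of the asserted estimate.

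On $\Omega_0$, estimates \eqref{est2}--\eqref{est3} yield $\theta_{a_1,\lambda_1}(x)\leq c\bigl(\ln^{\sigma_n}\lambda_1/\lambda_1^2 + |x-a_1|^2|\ln|x-a_1||^{\sigma_n}\bigr)\delta_{a_1,\lambda_1}(x)$. The constant factor $\ln^{\sigma_n}\lambda_1/\lambda_1^2$, combined with the classical $\int\delta_{a_1,\lambda_1}\delta_{a_2,\lambda_2}^{(n+2)/(n-2)}\leq c\varepsilon_{12}$ (estimate (E1) of \cite{Bahri-book}), contributes the $\ln^{\sigma_n}\lambda_1/\lambda_1^2$ piece of $\Xi_{12}$. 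For the polynomial factor I set $r:=|a_1-a_2|$ and split $\Omega_0$ into $B(a_1,r/2)$, $B(a_2,r/2)$, and their complement. In $B(a_2,r/2)$, use $|x-a_1|^2\leq 2(r^2+|x-a_2|^2)$: the $r^2$ piece gives $cr^2\varepsilon_{12}$, producing the $|a_1-a_2|^2$ summand of $\Xi_{12}$ (with the extra $|\ln|a_1-a_2||$ factor in dimension $4$ coming from evaluating $|\ln|x-a_1||\approx|\ln r|$); the $|x-a_2|^2$ piece, after rescaling $y=\lambda_2(x-a_2)$ and using the outside bound $\delta_{a_1,\lambda_1}(x)=O(1/(\lambda_1^{(n-2)/2}r^{n-2}))$, yields $O(\varepsilon_{12}\ln(\lambda_2 r)/\lambda_2^2)$, absorbed into $\varepsilon_{12}/\lambda_2^{3/2}$. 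In $B(a_1,r/2)$, a symmetric scaling argument (bounding $\delta_{a_2,\lambda_2}^{(n+2)/(n-2)}$ by its supremum $O(1/(\lambda_2^{(n+2)/2}r^{n+2}))$ and integrating $|x-a_1|^2\delta_{a_1,\lambda_1}$ via $y=\lambda_1(x-a_1)$) gives an $O(\varepsilon_{12}/\lambda_2^2)$ contribution, again absorbed. In the middle region both bubbles decay pointwise and the contribution is negligible.

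The main obstacle is the careful dimension-dependent bookkeeping of logarithmic factors: the weight $\sigma_n$ introduces a $\ln\lambda_1$ in dimension $4$ absent for $n\geq 5$, and the rescalings near $a_2$ each produce $\ln(\lambda_2 r)$ factors. All of them must be tracked and verified to fall under the rougher $\varepsilon_{12}/\lambda_2^{3/2}$ summand of $\Xi_{12}$, which is valid since $\lambda_2\to\infty$ makes $\ln\lambda_2/\lambda_2^{1/2}\to 0$.
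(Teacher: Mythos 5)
Your overall architecture (reduce to $\psi_1=\th_{a_1,\l_1}$ via \eqref{est1}, isolate a far region where $\d_{a_1,\l_1}=O(\l_1^{-(n-2)/2})$ to produce the $(\l_1\l_2)^{-(n-2)/2}$ term, then use \eqref{est2}--\eqref{est3} and the splitting $|x-a_1|^2\leq c(|a_1-a_2|^2+|x-a_2|^2)$ near $a_1$) matches the paper's proof. Where you genuinely diverge is in the treatment of the $|x-a_2|^2$--weighted interaction integral: you decompose $\O_0$ into $B(a_1,r/2)$, $B(a_2,r/2)$ and their complement with $r=|a_1-a_2|$, rescale, and use the far-field pointwise bound $\d_{a_1,\l_1}\leq c\,\l_1^{-(n-2)/2}r^{-(n-2)}$. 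The paper instead absorbs the weight into the bubble, $|x-a_2|^2\,\d_{a_2,\l_2}^{3/(n-2)}\leq c\,\l_2^{-3/2}|x-a_2|^{-1}$, and then applies H\"older with exponents $2n/3$ and $2n/(2n-3)$ together with Lemma \ref{lD} to get $c\,\e_{12}/\l_2^{3/2}$ in one stroke. That trick is precisely why the exponent $3/2$ appears in $\Xi_{12}$, and it is uniform over all admissible configurations.

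This uniformity is where your version has a genuine gap. Your bookkeeping repeatedly identifies $\bigl((\l_1\l_2)^{(n-2)/2}|a_1-a_2|^{n-2}\bigr)^{-1}$ with $O(\e_{12})$, e.g.\ when you convert $\frac{\ln(\l_2 r)}{\l_1^{(n-2)/2}\l_2^{(n+2)/2}r^{n-2}}$ into $O(\e_{12}\ln(\l_2 r)/\l_2^2)$. By \eqref{eq:3} one only has $\e_{12}\leq(\l_1\l_2|a_1-a_2|^2)^{-(n-2)/2}$, and the reverse comparison $(\l_1\l_2 r^2)^{-(n-2)/2}\leq C\e_{12}$ holds only when $\l_1\l_2 r^2\gtrsim \l_1/\l_2+\l_2/\l_1$, i.e.\ $r\gtrsim\max(\l_1^{-1},\l_2^{-1})$. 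The lemma's hypotheses merely require $\e_{12}$ small, which also admits configurations with $|a_1-a_2|\lesssim\l_i^{-1}$ (even $a_1=a_2$) where the smallness of $\e_{12}$ is driven by the ratio $\l_1/\l_2$; there your balls $B(a_i,r/2)$ degenerate and the far-field bound on $\d_{a_1,\l_1}$ is no longer the sharp one, so this regime needs a separate argument (or one simply uses the paper's H\"older route, which never invokes this comparison). A second, smaller point: for $n=4$ your replacement $|\ln|x-a_1||\approx|\ln|a_1-a_2||$ on $B(a_2,r/2)$ is only additive up to a constant, and when $|a_1-a_2|$ is of order one the target term $|a_1-a_2|^2|\ln|a_1-a_2||$ can degenerate; the paper handles this by working inside a ball of radius at most $e^{-4}/2$ and splitting according to whether $|a_1-a_2|\gtrless e^{-4}$, using monotonicity and convexity of $t\mapsto -t^2\ln t$ on $(0,e^{-4})$ in the latter case. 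In the regime actually used in Section 3 ($\l_1\simeq\l_2$ and $|a_1-a_2|\simeq\eta(\e)\gg\l_i^{-1}$) your argument is sound, but as a proof of the lemma as stated it does not cover all cases.
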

\begin{proof} First, we remark that we can modify the proof to improve the power of $ \l_2 $. It can be $ \beta $ with  $ \beta < 2 $. \\
Let $ B_1 := B(a_1, r_1 ) $ where $ r_1:= (1/2) \min ( d_{a_1}, e^{-4})$, using \eqref{est1} and \eqref{est2}, we get
$$  \int_\O \d_{a_2, \l_2} ^{\frac{n+2}{n-2} } | \psi _1 | \leq  c \int_{B_1} R^ 1_{a_1, \l_1}( x) \d_{a_2, \l_2} ^{\frac{n+2}{n-2} } \d _{a_1, \l_1} + \int_{ \O \setminus B_1} \d_{a_2, \l_2} ^{\frac{n+2}{n-2} } \d _{a_1, \l_1}   
$$
where $ R^ 1_{a_1, \l_1} $ is defined in  \eqref{est3}. For the last integral, it is easy to get 
$$ \int_{ \O \setminus B_1} \d_{a_2, \l_2} ^{\frac{n+2}{n-2} } \d _{a_1, \l_1}  \leq \frac{c}{\l_1 ^{(n-2) / 2 }} \int_{ \R^n} \d_{a_2, \l_2} ^{\frac{n+2}{n-2} }  \leq \frac{ c } {\l_1 ^{(n-2) / 2 } \l_2 ^{(n-2) / 2 } } . $$
Concerning the other one, using the formula of $ R^1_{a_1, \l_1}$, we have to distinguish three cases.
\begin{itemize}
\item For $ n = 3 $, we remark that, in this case, $ | x-a  | \d_{a, \l} \leq c / \sqrt{\l} $. Thus,  it holds
\begin{align*}
 \int_{B_1} R^ 1_{a_1, \l_1} ( x ) \d_{a_2, \l_2} ^{\frac{n+2}{n-2} } \d _{a_1, \l_1} &  \leq \frac{ c }{ \l_1} \int_{ \R^n}  \d_{a_2, \l_2} ^{\frac{n+2}{n-2} } \d _{a_1, \l_1} + \int_{B_1} | x - a_1 | \d_{a_2, \l_2} ^{\frac{n+2}{n-2} } \d _{a_1, \l_1} \\
 & \leq \frac{c}{ \l_1} \e_{12} + \frac{ c }{ \sqrt{\l_1} } \int_{ \R^n}  \d_{a_2, \l_2} ^{\frac{n+2}{n-2} }  \leq \frac{c}{ \l_1} \e_{12} + \frac{ c }{ \sqrt{\l_1 \l_2} } . 
 \end{align*}
\item For $ n \geq 5 $, it holds that  
\begin{align*}
 \int_{B_1} R^ 1_{a_1, \l_1} ( x ) \d_{a_2, \l_2} ^{\frac{n+2}{n-2} } \d _{a_1, \l_1} &  \leq \frac{ c }{ \l_1^2} \int_{ \R^n}  \d_{a_2, \l_2} ^{\frac{n+2}{n-2} } \d _{a_1, \l_1} + c \int_{B_1} | x - a_1 |^2 \d_{a_2, \l_2} ^{\frac{n+2}{n-2} } \d _{a_1, \l_1} \\
 & \leq c \frac{\e_{12}}{ \l_1^2}  + c  | a_1 - a_2 |^2 \int_{ \R^n}  \d_{a_2, \l_2} ^{\frac{n+2}{n-2} } \d _{a_1, \l_1} +  c \int_{ \O}  | x - a_2 |^2 \d_{a_2, \l_2} ^{\frac{n+2}{n-2} } \d _{a_1, \l_1}  . 
 \end{align*}
Notice that 
\begin{align}
 \int_{ \O}  | x - a_2 |^2 \d_{a_2, \l_2} ^{\frac{n+2}{n-2} } \d _{a_1, \l_1} & =  \int_{ \O} \frac{c}{ \l_2 ^{3/2} | x - a_2 |}   ( \d_{a_2, \l_2} ^{\frac{n-1}{n-2} } \d _{a_1, \l_1}) \notag  \\
 &  \leq \frac{ c } { \l_2 ^{ 3/2} }  \Big( \int_\O \frac{ 1 }{ | x-a_2 |^{ 2 n / 3 } }  \Big)^{3 / (2n) }  \Big( \int_\O \Big[ \d_{a_2, \l_2} ^{\frac{n-1}{n-2} } \d _{a_1, \l_1})\Big]^{\frac{2n}  {2n - 3 } } \Big)^{(2 n - 3 )/(2n)}  \notag \\
  & \leq   \frac{ c } { \l_2 ^{ 3/2} }  \e_{12}  . \label{wx1}
  \end{align}
\item For $ n = 4 $, 
$$
 \int_{B_1} R ^ 1_{a_1, \l_1} ( x ) \d_{a_2, \l_2} ^{\frac{n+2}{n-2} } \d _{a_1, \l_1}   \leq c \frac{ \ln \l_1 }{ \l_1^2} \int_{ \R^n}  \d_{a_2, \l_2} ^{\frac{n+2}{n-2} } \d _{a_1, \l_1} + \int_{B_1} | x - a_1 |^2 | \ln | x - a_1 | | \d_{a_2, \l_2} ^{\frac{n+2}{n-2} } \d _{a_1, \l_1} .
$$
Concerning the second integral, observe that, if $ | a_1 - a_2 | \geq e^{-4} $, it follows that $ | x - a_2| \geq e^{-4}/2 $ for each $ x \in B_1$  and therefore
$$ \int_{B_1} | x - a_1 |^2 | \ln | x - a_1 | | \d_{a_2, \l_2} ^{ 3 } \d _{a_1, \l_1} \leq \frac{c}{ \l_2 ^{ 3 }} \int_{B_1} | x - a_1 |^2 | \ln | x - a_1 |  \d _{a_1, \l_1} . 
$$

In the other case, that is $ | a_1 - a_2 | \leq e^{-4} $, it follows that $  | x - a_2 | \leq (3/2) e^{-4} $, and therefore, since the function : $t \mapsto - t^2 \ln t $ is increasing and convex on the set $(0,e^{-4})$, we deduce that 
$$ | x - a_1 |^2 | \ln | x - a_1 | | \leq c | x - a_2 |^2 | \ln | x - a_2 | | + c | a_2 - a_1 |^2 | \ln | a_2 - a_1 | |. $$
Thus the integral becomes 
\begin{align*} 
& \int_{B_1} | x - a_1 |^2 | \ln | x - a_1 | | \d_{a_2, \l_2} ^{ 3 } \d _{a_1, \l_1} \\
 & \leq c \int_{B_1} | x - a_2 |^2 | \ln | x - a_2 | | \d_{a_2, \l_2} ^{ 3 } \d _{a_1, \l_1} + c  | a_2 - a_1 |^2 | \ln | a_2 - a_1 | | \int_{B_1}  \d_{a_2, \l_2} ^{ 3 } \d _{a_1, \l_1} \\
  & \leq  \int_{ \O} \frac{c | \ln | x-a_2 | | }{ \l_2 ^{3/2} | x - a_2 |}   ( \d_{a_2, \l_2} ^{ 3/2 } \d _{a_1, \l_1}) +  c  | a_2 - a_1 |^2 | \ln | a_2 - a_1 | |  \e_{12} \\
  & \leq \frac{ c } { \l_2 ^{ 3/2} }  \e_{12} + c  | a_2 - a_1 |^2 | \ln | a_2 - a_1 | |  \e_{12} , 
\end{align*}
where we have used the same computations done in \eqref{wx1}.
\end{itemize}
This completes the proof of the lemma.
\end{proof}

\begin{lem}\label{ajout1} Let $ n \geq 4 $. \\
 $(1)$  Let $ d_0 $ be a fixed small positive constant,  $ a  \in \O_0 := \{ x \in \O : d(x , \partial \O) \geq d_0 \} $ and $ \l  $ be large. Then we have
\begin{align}  
 &  \| \pi \d_{a, \l} \| ^2 =  S_n + O\left(\frac{ \ln ^{\s_n} \l  }{\l ^{ 2 }}\right)  \,  , \, \,   \langle \pi \d_{a, \l} ,  \varphi   \rangle  = O\left(\frac{  \ln ^{\s_n} \l   }{\l ^{ m }}\right)  \mbox{ with }   \begin{cases} 
 m = 2 \mbox{ if } \varphi = {\l } {\partial (\pi \d_{a, \l} ) } / {\partial \l} ,   \\
 m= 3 \mbox{ if } \varphi = {\l }^{-1} {\partial (\pi \d_{a, \l} ) } / {\partial a } ,  \end{cases}   \label{tab1}  \\
 &  \Big\|  \l \frac{\partial \pi \d_{a, \l} }{\partial \l } \Big\| ^2 = c + O\left(\frac{ \ln ^{\s_n} \l  }{\l ^{ 2 }}\right)  \quad , \quad  
 \langle  \l \frac{\partial \pi \d_{a, \l} }{\partial \l } ,  \frac{1}{\l }\frac{\partial \pi \d_{a, \l}}{\partial a } \rangle  = O\left(\frac{ \ln ^{\s_n} \l  }{\l ^{ 3 }}\right)  ,   \label{tab2} \\
 &   \Big\| \frac{1}{\l }\frac{\partial \pi \d_{a, \l} }{\partial a ^j} \Big\|^2  = c''  + O\left(\frac{ \ln ^{\s_n} \l  }{\l ^{ 3 }}\right) \quad , \quad 
  \langle \frac{1}{\l }\frac{\partial\d_{a, \l}}{\partial a ^j} ,  \frac{1}{\l }\frac{\partial\d_{a, \l}}{\partial a ^\ell } \rangle =  O\left(\frac{ \ln ^{\s_n} \l  }{\l ^{ 3 }}\right)  \, \, \forall \ell \neq j , \label{tab3} 
   \end{align}
 $(2)$  Let  $ a_1 , a_2 \in \O_0 $ and $ \l_1 , \l_2 $ be large so that $ \e_{12} $, defined in \eqref{eq:3}, is small. Then we have 
\be \label{tab4}  \langle \varphi_1 , \varphi _2 \rangle = O ( \e _ {12} ) \quad \forall \varphi_i \in \Big\{ \pi \d_{a_i, \l_i} ,  \l_i \frac{\partial \pi \d_{a_i, \l_i} }{\partial \l _i } , \frac{1}{ \l _i}  \frac{\partial \pi \d_{a _i, \l_i} }{\partial a_i^j } \Big\}  , \, \, \mbox{ for } \, \, i \in \{1, 2\} \mbox{ and } j \in \{ 1, \cdots , n \} . \ee
\end{lem}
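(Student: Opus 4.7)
The natural approach is to exploit the defining equation $(-\Delta + V)\pi\d_{a,\l} = \d_{a,\l}^{p}$ of the projection in a systematic way. Differentiating it with respect to $\l$ and to $a^j$ yields the analogous equations $(-\Delta + V)(\l\partial_\l \pi\d_{a,\l}) = p\d_{a,\l}^{p-1}\l\partial_\l\d_{a,\l}$ and $(-\Delta + V)(\l^{-1}\partial_{a^j} \pi\d_{a,\l}) = p\d_{a,\l}^{p-1}\l^{-1}\partial_{a^j}\d_{a,\l}$. Hence each inner product $\langle \varphi_1, \varphi_2\rangle$ appearing in the lemma can be rewritten, by integration by parts, as an integral of the form $\int_\O(\text{power of }\d_{a,\l}\text{ times a derivative of }\d_{a,\l})\cdot \varphi_2$. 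The decomposition $\pi\d_{a,\l} = \d_{a,\l} - \theta_{a,\l}$ (and its derivative analogues) then splits each such integral into a leading $\R^n$-integral involving only bubbles, plus remainder integrals involving $\theta_{a,\l}$ (or its derivatives), for which the pointwise bounds \eqref{est2}-\eqref{est3} on $\O_0$ apply.

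For part $(1)$, the leading $\R^n$-integrals are classical. For $\|\pi\d_{a,\l}\|^2$ the main term gives $\int_{\R^n}\d_{a,\l}^{2n/(n-2)} = S_n$ up to a truncation error $O(\l^{-n})$, while the $\theta$-remainder is estimated exactly as in \eqref{eq:e3} and produces the claimed $O(\ln^{\s_n}\l / \l^2)$. For the cross product $\langle \pi\d_{a,\l}, \l\partial_\l \pi\d_{a,\l}\rangle$, the $\R^n$-leading term $\int_{\R^n}\d_{a,\l}^{p}\l\partial_\l\d_{a,\l}$ \emph{vanishes}, since it equals $(p+1)^{-1}\l\partial_\l\int_{\R^n}\d_{a,\l}^{p+1}$ and the latter integral is independent of $\l$; similarly, the leading term of $\langle \pi\d_{a,\l}, \l^{-1}\partial_{a^j} \pi\d_{a,\l}\rangle$ vanishes by translation invariance of $\int_{\R^n}\d_{a,\l}^{p+1}$. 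The $\theta$-remainders are then bounded using \eqref{est1}-\eqref{est3}. The squared norms $\|\l\partial_\l\pi\d_{a,\l}\|^2$ and $\|\l^{-1}\partial_{a^j}\pi\d_{a,\l}\|^2$ reduce, via the same trick, to the well-known constants $p\int_{\R^n}\d_{0,1}^{p-1}(\l\partial_\l\d_{0,1})^2$ and $p\int_{\R^n}\d_{0,1}^{p-1}(\l^{-1}\partial_{a^j}\d_{0,1})^2$, with error terms treated identically. Finally, the off-diagonal orthogonalities between $\l\partial_\l$ and $\l^{-1}\partial_{a^j}$, and between $\partial_{a^j}$ and $\partial_{a^\ell}$ for $j\neq \ell$, hold at leading order on $\R^n$ by parity of the integrand; one is then left only with the $\theta$-remainder and the exterior tail.

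For part $(2)$, the strategy is essentially contained in the computations already performed in the proofs of Propositions \ref{gradalpha}-\ref{gradpoint}. Each scalar product $\langle \varphi_1, \varphi_2\rangle$ is reduced, via the equation, to $\int_\O \d_{a_1,\l_1}^{p-k}(\partial^k\d_{a_1,\l_1})\cdot\varphi_2 - \int_\O \d_{a_1,\l_1}^{p-k}(\partial^k\d_{a_1,\l_1})\cdot(\text{derivative of }\theta_{a_2,\l_2})$. Since by \eqref{est0} each $\varphi_i$ satisfies $|\varphi_i|\le c\,\pi\d_{a_i,\l_i}\le c\,\d_{a_i,\l_i}$, the first type of integrals is bounded by combinations of $\int\d_{a_1,\l_1}^{\alpha}\d_{a_2,\l_2}^{\beta}$ with $\alpha+\beta = 2n/(n-2)$, all of which are of order $\e_{12}$ by formulas (E1), (F11) and (F16) of \cite{Bahri-book} together with Lemma \ref{lD}. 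The $\theta$-remainders are of strictly lower order in $\e_{12}$ as in Lemma \ref{dj-thetai}.

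The main technical obstacle is the careful tracking of the $\theta$-contributions in part $(1)$, in particular the appearance of the logarithmic factor $\ln \l$ in dimension $n=4$ coming from $R^1_{a,\l}$; once this bookkeeping is done carefully (e.g.\ via Hölder's inequality together with Lemma \ref{lD}, as in the proof of Lemma \ref{dj-thetai}), all remaining estimates reduce either to standard $\R^n$-integrals of bubbles or to elementary cancellations by parity.
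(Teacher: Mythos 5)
Your strategy coincides with the paper's for part $(2)$ and for the $O(\ln^{\s_n}\l/\l^{2})$-precision claims of part $(1)$: there the pure-bubble integrals are classical and the $\theta$-remainders are indeed controlled by \eqref{est1}--\eqref{est3} exactly as in \eqref{eq:e3}. The gap lies in the $O(\ln^{\s_n}\l/\l^{3})$ estimates, i.e.\ the case $m=3$ of \eqref{tab1}, the second claim of \eqref{tab2}, and both claims of \eqref{tab3}. After the pure-bubble part is killed by oddness, you propose to bound the surviving $\theta$-contribution, e.g.
$$
\int_\O\d_{a,\l}^{4/(n-2)}\,\l\frac{\partial\d_{a,\l}}{\partial\l}\,\frac{1}{\l}\frac{\partial\th_{a,\l}}{\partial a},
$$
by the pointwise estimate $|\l^{-1}\partial\th_{a,\l}/\partial a|\le c\,R^2_{a,\l}\,\d_{a,\l}$ from \eqref{est2}. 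This only yields
$$
c\int_\O\d_{a,\l}^{2n/(n-2)}\Big(\frac{1}{\l^{2}}+\frac{|x-a|}{\l}\Big)=O\Big(\frac{1}{\l^{2}}\Big),
$$
since the second summand concentrates at scale $|x-a|\sim\l^{-1}$; you lose a full power of $\l$ relative to the claim, because taking absolute values destroys the cancellation responsible for the extra decay. The paper recovers it by a second integration by parts: using $(-\D+V)\th_{a,\l}=V\d_{a,\l}$, the $\theta$-term equals $\int_\O\l\,\partial_\l(\pi\d_{a,\l})\,V\,\l^{-1}\partial_a\d_{a,\l}$ up to a boundary term of order $\l^{-(n-1)}$; one then splits off $V(a)\int_{B(a,d_0)}\partial_\l\d_{a,\l}\,\partial_a\d_{a,\l}$, which vanishes by oddness, and bounds the remainder via $|\partial_a\d_{a,\l}|\le c\,\d_{a,\l}/|x-a|$ combined with \eqref{est2}--\eqref{est3}; this is where the genuine $O(\ln^{\s_n}\l/\l^{3})$ comes from. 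The same device (or the citation of \cite{2ABE} that the paper uses for \eqref{eqq:e3}) is needed for $\langle\pi\d_{a,\l},\l^{-1}\partial_a(\pi\d_{a,\l})\rangle$ and for \eqref{tab3}; as written, your argument establishes only $O(\l^{-2})$ for these quantities, which is weaker than the lemma asserts and insufficient for the subsequent estimate of the multipliers $C_{ij}$ in Lemma \ref{lS5}.
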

\begin{pf} Note that, the first assertion is proved in \eqref{eqq:e1}, \eqref{eqq:e2} and \eqref{eqq:e3} respectively. Concerning  Claim \eqref{tab2}, using \eqref{est0} and \eqref{est1}, we get 
\begin{align*}
\Big\|  \l \frac{\partial \pi \d_{a, \l} }{\partial \l } \Big\| ^2 & = \int_\O  \Big[( - \D + V ) \l \frac{\partial \pi \d_{a, \l} }{\partial \l } \Big] \l \frac{\partial \pi \d_{a, \l} }{\partial \l } = \frac{n+2}{n-2} \int _\O \d _{ a, \l} ^{4/(n-2) } \l \frac{\partial  \d_{a, \l} }{\partial \l } \Big( \l \frac{\partial \d_{a, \l} }{\partial \l } - \l \frac{\partial \theta_{a, \l} }{\partial \l } \Big) \\
 & = \frac{n+2}{n-2}   \int _{\R^n} \d _{ a, \l} ^{4/(n-2) } \Big(\l \frac{\partial  \d_{a, \l} }{\partial \l } \Big)^2 + O \Big( \int_{ \R^n \setminus \O}  \d _{ a, \l} ^{2n /(n-2) }  + \int_\O  \d _{ a, \l} ^{(n+2)/(n-2) }   \theta _{ a, \l} \Big) .
\end{align*}
Thus, \eqref{eq:e3} and easy computations imply the proof of the first equality in \eqref{tab2}. For the other equality, observe that 
$$  \langle  \l \frac{\partial \pi \d_{a, \l} }{\partial \l } ,  \frac{1}{\l }\frac{\partial \pi \d_{a, \l}}{\partial a } \rangle = \int_\O \Big[ ( - \D + V ) \l \frac{\partial \pi \d_{a, \l} }{\partial \l } \Big] \frac{1}{ \l} \frac{\partial \pi \d_{a, \l} }{\partial a } = \frac{n+2}{n-2} \int _\O \d _{ a, \l} ^{\frac{4}{n-2} } \l \frac{\partial  \d_{a, \l} }{\partial \l } \Big( \frac{1}{\l } \frac{\partial \d_{a, \l} }{\partial a } - \frac{1}{\l } \frac{\partial \theta_{a, \l} }{\partial a } \Big) . $$
Let $ B:= B(a, d_0 )$, by oddness, it holds that
$$ \int _\O \d _{ a, \l} ^{\frac{4}{n-2} } \l \frac{\partial  \d_{a, \l} }{\partial \l }  \frac{1}{\l } \frac{\partial \d_{a, \l} }{\partial a } = \int _{\O\setminus B } \d _{ a, \l} ^{\frac{4}{n-2} } \l \frac{\partial  \d_{a, \l} }{\partial \l }  \frac{1}{\l } \frac{\partial \d_{a, \l} }{\partial a } = O \Big( \frac{1}{ \l^{n+1}} \Big) . $$
For the other integral, since $ a \in \O_0$,  we get
\begin{align*}
 \frac{n+2}{n-2}  \int _\O \d _{ a, \l} ^{\frac{4}{n-2} } \l \frac{\partial  \d_{a, \l} }{\partial \l }  \frac{1}{\l } \frac{\partial \theta_{a, \l} }{\partial a } & = \int_\O \Big[ ( - \D + V ) \l \frac{\partial \pi \d_{a, \l} }{\partial \l } \Big] \frac{1}{ \l} \frac{\partial  \theta_{a, \l} }{\partial a } \\
 & = \int_\O  \l \frac{\partial \pi \d_{a, \l} }{\partial \l }   \Big[ ( - \D + V )\frac{1}{ \l} \frac{\partial  \theta_{a, \l} }{\partial a } \Big]  - \int_{ \partial \O} \frac{\partial }{ \partial \nu} \Big[  \l \frac{\partial \pi \d_{a, \l} }{\partial \l } \Big] \frac{1}{ \l} \frac{\partial  \theta_{a, \l} }{\partial a }  \\
  & =  \int_\O  \l \frac{\partial \pi \d_{a, \l} }{\partial \l }  V \frac{1}{ \l} \frac{\partial  \d_{a, \l} }{\partial a } + O \Big( \frac{1}{ \l^{ n-1} } \Big) . 
 \end{align*}
To complete the proof,  using \eqref{est0}, \eqref{est1}, \eqref{est2} and the fact that $ | \partial \d_{a, \l} / \partial a | \leq c \d_{a, \l} / | x-a | $, we obtain 
\begin{align*}
&   \Big | \int_{ \O \setminus B  }  \l \frac{\partial \pi \d_{a, \l} }{\partial \l }  V \frac{1}{ \l} \frac{\partial  \d_{a, \l} }{\partial a }  \Big |  \leq c \int_{ \O \setminus B  } \d_{a, \l}  \Big  | \frac{1}{ \l} \frac{\partial  \d_{a, \l} }{\partial a } \Big | \leq \frac{ c }{ \l^{n-1} }, \\
&  \int_{  B  }  \l \frac{\partial  \pi \d_{a, \l} }{\partial \l }  V \frac{1}{ \l} \frac{\partial  \d_{a, \l} }{\partial a }   = V(a) \int_{  B(a, d_0) } \frac{\partial  \d_{a, \l} }{\partial \l }  \frac{\partial  \d_{a, \l} }{\partial a } + O \Big( \int_{  B } \frac{1}{\l } \d_{a, \l} ^2 + 
\int_{  B } R^1_{a,\l}(x)  \frac{ \d_{a, \l}^2 }{ \l | x - a | }    \Big )    \leq \, c \,  \frac{ \ln ^{ \s_n} \l  }{ \l^{ 3 } } . 
\end{align*}
This completes the proof of Assertion \eqref{tab2}. 

The proof of \eqref{tab3} can be done in the same way than the proof of \eqref{tab2}. Thus, we will omit it here. 

Concerning Assertion \eqref{tab4}, note that, \eqref{est0} and easy computations imply that 
$$ | ( - \D + V ) \varphi _i | \leq c \d_{a_i , \l_i }^{(n+2)/(n-2)}  \qquad \mbox{ and } \qquad  | \varphi_i | \leq c \d_{a_i , \l_i }  \qquad \mbox{ for } \, i \in \{ 1, 2 \}.$$
Thus, using the last inequality of \eqref{dij}, we deduce that 
$$ |  \langle \varphi_1 , \varphi _2 \rangle | = | \int_\O (  - \D + V ) \varphi _1 ) \varphi_2 | \leq c \int_\O \d_{a_1 , \l_1 } ^{ (n+2)/(n-2) } \d _ {a_2 , \l_2 } \leq c \e_{12} .$$
This completes the proof of Assertion \eqref{tab4}. 
\end{pf}

\section{Conclusion}
In this paper, we have investigated the existence of solutions to a nonlinear elliptic problem with Dirichlet boundary conditions, involving a slightly subcritical exponent for Sobolev embedding $H^1_0(\Omega) \hookrightarrow L^q(\Omega)$. Through a careful asymptotic analysis of the gradient of the associated Euler-Lagrange functional near the "bubbles," we successfully constructed solutions that exhibit bubble formation clustered at interior points. The methodology adopted is tailored to variational problems. While our work demonstrates the existence of solutions with clustered bubbles for the given problem, several promising avenues for further research and open questions remain:
\begin{itemize}
 \item[\bf{(i)}] {\bf Location of the Concentration Points:}
 This paper focuses on the construction of interior bubbling solutions with clustered bubbles. A natural extension of this work would be to investigate the existence of solutions that concentrate  at non-isolated interior points that approach the boundary in the limit.
 \item[\bf{(ii)}]  {\bf Impact of the Nature of Critical Points:} 
 The solutions constructed in this paper are based on the assumption that the chosen critical point $b$ of the potential $V$ is non-degenerate, and that the corresponding function $\mathcal{F}_{b,N}$ has a non-degenerate critical point, where $\mathcal{F}_{b,N}$ is defined by \eqref{FYN} with $N$ denotes the number of bubbles clustered at point b. A natural question arises: what occurs if $b$ is degenerate, or if $\mathcal{F}_{b,N}$ lacks non-degenerate critical points?
 \item[\bf {(iii)}]
 {\bf Impact of Subcritical Exponent:} The present work addresses a slightly subcritical exponent for Sobolev embedding. Future investigations could extend the analysis to exponents that are slightly supercritical, that is, when $\varepsilon < 0$ but close to zero.
 \end{itemize}

\bigskip

\noindent
{\bf Acknowledgment.} The Researchers would like to thank the Deanship of Graduate Studies and Scientific Research at Qassim University for financial support (QU-APC-2025).
\bigskip

\noindent {\bf Funding information:} The Article Processing Charges  of this article was supported by the Deanship of Graduate Studies and Scientific Research at Qassim University.
\bigskip

\noindent {\bf Author contributions:} All authors contributed significantly and equally to writing this article. All authors read and approved the final manuscript.
\bigskip

\noindent {\bf Conflict of interest:} The authors state no conflict of interest.
\bigskip

\noindent {\bf Data availability statement:} Data are contained within the article.

\end{document}